\theoremstyle{plain}
\newtheorem{thm}{Theorem}[section]
\newtheorem{lem}{Lemma}[section]
\newtheorem{prop}{Proposition}[section]
\newtheorem{assum}{Assumption}
\theoremstyle{definition}
\newtheorem{defn}{Definition}[section]
\newtheorem{rem}{Remark}[section]
\newenvironment{proof1}[1]{\begin{trivlist} \item[] {\em Proof of #1:}}{\newline \textcolor{white}{.}\hfill $\Box$
                      \end{trivlist}}
\newcommand{\ud}{\,\mathrm{d}}
\newcommand{\la}{\lambda}
\newcommand{\R}{\mathbb{R}}
\newcommand{\pa}{\partial}
\newcommand{\eps}{\epsilon}
\newcommand{\cs}{$\clubsuit$ }
\newcommand{\paSA}{\pa S \backslash A}
\newcommand{\paOA}{\pa \Omega \backslash A}
\newcommand{\norm}[1]{\left\lVert#1\right\rVert}
\newcommand{\dv}{\operatorname{dv}}
\newcommand{\x}{{\bf x}}
\newcommand{\blue}[1]{{\color{blue}{#1}}}
\newcommand{\yai}[1]{{\color{violet}{#1}}}
\title[Quantitative bounds on Impedance-to-Impedance operators]{Quantitative bounds on Impedance-to-Impedance operators with applications to fast direct solvers for PDEs}
\author[T. Beck]{Thomas Beck}
\email{tbeck7@fordham.edu}
\address{Department of Mathematics, Fordham University, John Mulcahy Hall, Bronx, NY 10458}
  \author[Y. Canzani]{Yaiza Canzani}
\email{canzani@email.unc.edu}
\address{Department of Mathematics, University of North Carolina at Chapel Hill \\ CB\#3250
  Phillips Hall \\ Chapel Hill, NC 27599}
\author[J.L. Marzuola]{Jeremy L. Marzuola}
\email{marzuola@math.unc.edu}
\address{Department of Mathematics, University of North Carolina at Chapel Hill \\ CB\#3250
  Phillips Hall \\ Chapel Hill, NC 27599}
\date{\today}                                           
\begin{document}
\maketitle

\begin{abstract}

We prove quantitative norm bounds for a family of operators involving impedance boundary conditions on  convex, polygonal domains. A robust numerical construction of Helmholtz scattering solutions in variable media via the Dirichlet-to-Neumann operator involves a decomposition of the domain into a sequence of rectangles of varying scales and constructing impedance-to-impedance boundary operators on each subdomain. Our estimates in particular ensure the invertibility, with quantitative bounds in the frequency, of the merge operators required to reconstruct the original Dirichlet-to-Neumann operator in terms of these impedance-to-impedance operators of the sub-domains. A key step in our proof is to obtain Neumann and Dirichlet boundary trace estimates on solutions of the impedance problem, which are of independent interest. In addition to the variable media setting, we also construct bounds for similar merge operators in the obstacle scattering problem.
   
\end{abstract}

\section{Introduction and statement of results}

 In this article we prove norm bounds for a family of elliptic operators in convex, polygonal domains with impedance boundary conditions. Our framework is motivated by the recent need for studying the effect of impedance boundary conditions in implementations of numerical methods that compute the Dirichlet-to-Neumann map for Schr\"odinger operators on planar domains.  The computation of the Dirichlet-to-Neumann map is an important problem in the study of elliptic boundary value problems, with applications in imaging (\cite{nachman1996global,fang2009viable,wadbro2010high,feldman2019calderon}, \cite{wang20113d}), computation of high energy eigenvalues (\cite{barnett2014fast}), boundary trace estimates (\cite{han2015sharp,barnett2018comparable}), and in scattering theory for solutions to the Helmholtz equation (\cite{gillman2015spectrally,pedneault2017schur,graham2019helmholtz}).  


 Constructing numerical methods that solve elliptic problems with impedance boundary conditions on planar domains is a fundamental problem in numerical analysis. For instance, they arise when computing single frequency scattering solutions of the Helmholtz equation.  There are a large number of methods that treat the latter problem, see for instance the classical results   \cite{descloux1983accurate,kirsch1990convergence,kirsch1994analysis,melenk1995generalized,monk1999least}, as well as the more recent results \cite{barnett2010exponentially,baskin2016sharp,barnett2018comparable,pedneault2017schur,fortunato2020ultraspherical,mascotto2020fem}.  The recent book \cite{martinsson2019fast} contains a very thorough overview of the subject.  The approach to solving elliptic boundary value and scattering problems using Impedance-to-Impedance operators has become known as the hierarchical Poincare-Steklov method.

{One robust approach for numerically constructing scattering solutions via the Dirichlet-to-Neumann (DtN) map is a divide and conquer approach, which involves domain decomposition through splitting the domain into smaller more manageable sub-domains.  One of the first results that used impedance boundary conditions in these methods was \cite{benamou1997domain}.  Impedance boundary conditions arise both in classical domain domain decomposition methods, where an elliptic boundary problem is solved as a standard linear system, as well as in the building of direct solvers as in \cite{gillman2015spectrally,pedneault2017schur}, where an approximate inverse is constructed.} The key feature of these numerical constructions of the DtN operator is the decomposition of the large rectangular domain containing the scatterers into small polygonal components.  Computing the DtN operator of each small component is ill-conditioned numerically, due to the possibility of hitting a resonance of the DtN operator. Therefore, one instead imposes impedance boundary conditions on each sub-domain and computes the resulting impedance-to-impedance operators (see Definition \ref{defn:R-Omega} for the precise definition of these operators).  Working with these operators removes the possibility of artificially introducing a resonance to the problem. 
 These domains are then merged back together through a canonical process to recover the impedance-to-impedance and DtN operators of the original domain. We describe this process in Section \ref{sec:ItIapps}.


A version of the divide and conquer approach described above referred to as the hierarchical Poincare-Steklov method was implemented very successfully in the work of Gillman-Barnett-Martinsson \cite{gillman2015spectrally}, where the authors study a single frequency Helmholtz scattering problem in the presence of an inhomogeneous medium.  Using a spectral discretization on a Chebyshev grid, one can thus solve the elliptic problem on each sub-domain to high accuracy.  The merge process is done on a sequence of rectangles of varying scales in order to reconstruct the exterior DtN operator  (see \cite[Figure 2]{gillman2015spectrally}). A similar merge procedure using domain decomposition with impedance boundary conditions has also been applied in obstacle scattering problems, with multiple disjoint scatterers, see \cite{pedneault2017schur}.  A similar approach was developed in \cite{gillman2014direct} using DtN maps instead of Impedance-to-Impedance maps.


The procedure given in \cite{gillman2015spectrally} requires the invertibility of a merge operator, derived from equations $(2.15)$ and $(2.16)$ of Section $2.4$ of that paper, coming from impedance-to-impedance operators on adjacent domains. The invertibility of this operator is assumed in \cite{gillman2015spectrally}, and holds for all of their numerical computations. As a consequence of our main theorems, we prove that this operator is indeed invertible and establish frequency dependent estimates on its inverse.


Impedance boundary conditions and impedance-to-impedance operators have also been used in many other numerical schemes. For example, when computing scattering solutions of the Helmholtz equation in variable media, an important model problem is to replace the outgoing Sommerfeld radiation condition satisfied by the scattering field by an impedance boundary condition.  {In this case, impedance boundary conditions are used as first order absorbing boundary conditions and were proposed in \cite{engquist1977absorbing_moc,engquist1977absorbing,engquist1979radiation,bayliss1982boundary}.  Estimates on such models have been derived recently in \cite{graham2019helmholtz}, as well as in \cite{galkowski2021local} in the high frequency limit.} The key feature of this model is that this boundary condition is imposed on the boundary of a large rectangular domain outside of which the wave speed is constant, and allows for a numerical study of the problem. Impedance-to-impedance operators have also been used in \cite{joly2006} when calculating periodic wave-guides with perturbations and in \cite{fliss2015} for computing modes in photonic crystal wave-guides.

\subsection{Set-up of the problem and statement of results}
\label{sec:ItIapps}

We study the impedance-to-impedance (ItI) operator for a domain $\Omega \subset \R^2$, defined as follows. Given $f\in L^2(\pa\Omega)$, consider the problem
\begin{equation}
\label{eqn:gbm}
\begin{cases}
    \Delta u + k^2 V u  = 0 &\text{ in } \Omega, \\
    \pa_{\nu}u + iku  = f &\text{ on } \pa\Omega,
\end{cases}
\end{equation}
where $\Delta$ is the negative definite Laplacian, $k>0$, and $V\in L^\infty(\Omega)$ is a positive scattering potential.
Then, under suitable assumptions on the domain $\Omega$ and potential $V$, problem \eqref{eqn:gbm} has a unique solution $u\in H^1(\Omega)$ such that $\pa_{\nu}u - iku \big|_{\pa\Omega}\in L^2(\pa\Omega)$. Here $\nu$ is a unit  {normal vector} to $\pa\Omega$. The  ItI operator $R_{\Omega}$ is then defined by 
$$
R_{\Omega}f =  \big( \pa_{\nu}u - iku \big)\Big|_{\pa\Omega}.
$$
An important feature of the ItI operator is that, unlike the DtN operator, it is a unitary transformation and hence numerically very well-conditioned.  Our main results concern the case where $\Omega$ is the rectangle $[0,2]\times[0,1]$, or the unit squares which are the two halves of this rectangle. We begin with the following set-up.

Let $S = [0,1]\times[0,1]$ be the unit square, and $V_1$, $V_2\in L^{\infty}(S)$ be non-negative potentials.  Writing the right hand edge as $A = \{1\}\times[0,1]$, and given $f\in L^2(A)$, $k>0$, suppose that for $j=1,2$, $u_j$ solve the problem
\begin{equation}\label{eqn:u-i1}
\begin{cases} 
(\Delta + k^2V_j)u_j  = 0 & \text{ in } S, \\ 
\pa_{\nu} u_j + iku_j  = 0 &\text{ on } \pa S\backslash A, \\ 
\pa_{\nu} u_j + iku_j  = f &\text{ on } A.
\end{cases}
\end{equation}
Here $\nu$ is the outward pointing unit normal to $\pa S$. We view the potentials $V_1$, $V_2$ as coming from the smooth potential on the rectangle $[0,2]\times[0,1]$, in the following sense.


\begin{assum}[$V_j$ are non-trapping with respect to $(1,0)$] \label{assum:potential-main}
There exist $V \in C^1([0,2]\times[0,1]) $  and  $c>0$ such that the potentials $V_1$, $V_2\in C^{1}(S)$ defined by
\begin{equation*}
V_1(x,y) = V(x,y), \qquad V_2(x,y) = V(2-x,y),
\end{equation*}
satisfy 
\begin{align} \label{eqn:constants}
2V_j(x,y)+  (x-1,y) \!\cdot\! \nabla V_j(x,y)  \geq c 
\end{align}
for all $(x,y)\in S$, and $j = 1,2$.
\end{assum}

 In particular, the condition in \eqref{eqn:constants} ensures that $V_j(x,y)\geq\tfrac{1}{2}c$ for all $(x,y)\in S$. Assumption \ref{assum:potential-main} implies that both $V_1$ and $V_2$ are non-trapping with respect to the vertex $(1,0)$ in a way which we will make precise in Remark \ref{rem:non-trapping} below. The problem \eqref{eqn:u-i1} has a unique solution $u_j\in H^1(S)$ (see Proposition 2.1 in \cite{gillman2015spectrally}). Under Assumption \ref{assum:potential-main}, this solution $u_j$ satisfies $\pa_{\nu}u_j\big|_{A}\in L^2(A)$ and an elliptic estimate giving $L^2$-norm bounds on the boundary data (see Proposition~\ref{prop:elliptic1} below). Therefore, we may now define the operators $R_j$ on $L^2(A)$:
\begin{defn} \label{defn:R-Omega}
{Let $V_1,V_2$ satisfy  Assumption \ref{assum:potential-main}, and let $k>0$. For $j=1,2$  we define the impedance-to-impedance operator $R_j$ on  $ L^2(A)$ by
\begin{align}\label{eqn:R-Omega}
R_j f = \big( \pa_{\nu}u_j - iku_j \big)\Big|_{A},
\end{align}
where $u_j$ is as in {\eqref{eqn:u-i1}}.}
\end{defn}

By Assumption \ref{assum:potential-main} on the potentials $V_j$, the operators $R_j$ satisfy boundedness properties involving the following  modified versions of the Sobolev $H^1$-norm, adapted to the size of the $k$ parameter.
For functions $h\in L^2(A)$  {and $k>0$} we define the norm
\begin{align}\label{defn:H1-A}
 \norm{h}_{\mathcal{H}^1_k(A)} & := \norm{kh}_{L^2(A)} + \norm{\pa_{\tau}h}_{L^2(A)}.
\end{align}
Here $\pa_{\tau} = \pa_{y}$ is the tangential derivative on $A$. 
{In addition, we let $\mathcal{H}^1_k(A)\subset L^2(A)$ be the Sobolev space defined using the norm  $ \norm{\cdot}_{\mathcal{H}^1_k(A)}$.}

Our main theorems concern the invertibility of the operator $I-R_1R_2$ on appropriately chosen function spaces. When this inverse exists, we denote it by
\begin{equation}
    \label{merge}
    W = (I-R_1R_2)^{-1}.
\end{equation} 



Before stating our estimates on $W$, we first explain how $W$ appears when constructing the ItI operator, $R_\Omega$, on $\Omega = [0,2]\times[0,1]$ by a merge procedure involving ItI operators on the two squares. Denoting $S_1$ and $S_2$ to be the two square halves of $\Omega$, we consider the two elliptic problems
\begin{equation*}
\begin{cases} 
(\Delta + k^2 V)v_j  = 0 & \text{ in } S_j, \\ 
\pa_{\nu} v_j + ikv_j  = 0 &\text{ on } \pa S_j\backslash A, \\ 
\pa_{\nu} v_j + ikv_j  = f &\text{ on } A
\end{cases}
\qquad \qquad \qquad
\begin{cases} 
(\Delta + k^2 V)w_j  = 0 & \text{ in } S_j, \\ 
\pa_{\nu} w_j + ikw_j  = g &\text{ on } \pa S_j\backslash A, \\ 
\pa_{\nu} w_j + ikw_j  = 0 &\text{ on } A.
\end{cases}
\end{equation*}
for $f \in L^2 (A)$ and $g \in L^2 (\pa S_j \backslash A)$.
We can then define the ItI operators $R_j$ and $Q_j$ on $L^2(A)$ and $L^2(\pa S_j\backslash A)$ respectively 
by
\begin{align*}
    R_jf = \pa_{\nu}v_j - ikv_j\big|_A, \qquad Q_jg = \pa_{\nu}w_j - ikw_j\big|_{A}.
\end{align*}
An illustration of this set-up is given in a simple setting in Figure \ref{fig:merge1}.

\begin{figure}[h]
		\includegraphics[width=6cm]{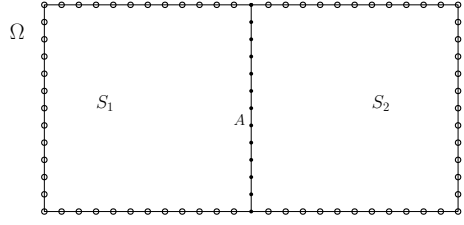}
	\caption{ An example rectangle $\Omega$ formed by regions $S_1$ and $S_2$ merged together on edge $A$. We distinguish between $\partial \Omega \setminus A$ and $A$ by using circles and dots respectively.}
	\label{fig:merge1}
\end{figure}

The operators $R_j$ and $Q_j$ are given in terms of the ItI operators $R_{S_j}$ of the two unit squares. Note that the operators $R_j$ are the same as those in Definition \ref{defn:R-Omega}, which can be seen by transforming the square $S_2$ onto $S = [0,1]\times[0,1]$ via the transformation $(x,y)\mapsto (2-x,y)$. Then, to construct the ItI operator, $R_{\Omega}$, of the rectangle $\Omega$ in terms of the operators $R_j$, $Q_j$, there is the following merge procedure: Given $h\in L^2(\pa\Omega)$, we define $u\in H^1(\Omega)$ to be the solution of
\begin{equation*}
\begin{cases} 
(\Delta + k^2V )u  = 0 & \text{ in } \Omega, \\ 
\pa_{\nu} u + iku  = h &\text{ on } \pa \Omega,
\end{cases}
\end{equation*}
so that $R_{\Omega}h = \pa_{\nu}u -iku\big|_{\pa\Omega}$. To recover $R_{\Omega}h$, we set $f_j = \pa_{\nu_j}u + iku\big|_A$ and $g_j = \pa_{\nu_j}u - iku\big|_A$ to be \textit{incoming} and \textit{outgoing} data on the boundary $A$ shared by the two squares $S_j$. Here $\nu_j$ is the outward pointing unit normal to $S_j$ on $A$, so that $\pa_{\nu_1} = - \pa_{\nu_2} = \pa_x$, and in particular $f_1 = -g_2$, $f_2 = -g_1$. Then, to construct $R_{\Omega}$ in terms of $R_j$ and $Q_j$, it is necessary to express the data $f_j$ on $A$ in terms of $h$. Writing $h_j = h\big|_{\pa\Omega\cap\pa S_j}$ gives the system of equations
\begin{align} \label{merge-eqn1}
\begin{split}
Q_1h_1 + R_1f_1 & = \pa_{\nu_1}u - iku\big|_{A} = g_1 \\
Q_2h_2 + R_2f_2 & = \pa_{\nu_2}u - iku\big|_{A} = g_2.
\end{split}
\end{align}
Using $f_1 = -g_2$, $f_2 = -g_1$, we can rewrite the system in \eqref{merge-eqn1} as
\begin{align} \label{merge-eqn2}
\begin{split}
Q_1h_1 + R_1f_1 & = - f_2 \\
Q_2h_2 + R_2f_2 & = -f_1.
\end{split}
\end{align}
This system is invertible, in that one can write $f_1$ and $f_2$ in terms of $h_1$ and $h_2$, precisely when the operator $I-R_1R_2$ is invertible. In this case, setting $W = (I-R_1R_2)^{-1}$ as above, the system of equations in \eqref{merge-eqn2} can be inverted to write $f_2$ as
\begin{align} \label{merge-eqn3}
    f_2 = -WQ_1h_1 + WR_1Q_2h_2.
\end{align}
The function $f_1$ can then be recovered  from the second equation in \eqref{merge-eqn2}. Now that $f_1$ and $f_2$ are prescribed in terms of $h$, we can immediately use $R_{S_1}$ applied to $h|_{\pa\Omega\cap\pa S_1}$ and $f_1$ on $\pa S_1$ and $R_{S_2}$ applied to $h|_{\pa\Omega\cap\pa S_2}$ and $f_2$ on $\pa S_2$ in order to define $R_{\Omega}h$. This procedure  is precisely the merge procedure given in \cite{gillman2015spectrally}, where the invertibility of $I-R_1R_2$ is assumed. Our main theorems give the existence of $W = (I-R_1R_2)^{-1}$, with quantitative bounds in the frequency $k$, which then allows for the full recovery of the $R_\Omega$ map. In addition, in Section \ref{sec:example} we  show that our results also give bounds on the operators $WQ_1$ and $WR_1Q_2$ appearing in \eqref{merge-eqn3}.  {We hope that our main theorems shed some more light into why impedance boundary conditions work so well in classical domain decompositions in numerical methods.}

We now state our main results on the invertibility of $I-R_1R_2$.
\begin{thm} \label{thm:injective}
{Let $V_1,V_2$ satisfy  Assumption \ref{assum:potential-main}, and let $k>0$. Then,  for $R_1, R_2$ as defined in \eqref{eqn:R-Omega}, the operator
$$I-R_1R_2: L^2(A) \to \mathcal{H}^1_k(A)$$
is a bijection, and the inverse operator, $W$ defined in \eqref{merge},  is bounded.}
\end{thm}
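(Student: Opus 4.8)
The statement requires showing that $I - R_1 R_2 : L^2(A) \to \mathcal{H}^1_k(A)$ is a bijection with bounded inverse. There are really three things bundled here: (i) the operator actually maps into $\mathcal{H}^1_k(A)$ (a regularity/smoothing statement — a priori $R_j$ only returns $L^2$ boundary data), (ii) it is injective, and (iii) it is surjective onto $\mathcal{H}^1_k(A)$ with a quantitative bound on the inverse. The key structural facts I would exploit are: each $R_j$ is a contraction (indeed unitary on $L^2$ in the constant-coefficient case, and a contraction on $L^2(A)$ here by the energy/Green's identity argument standard for impedance problems), and — more importantly — the non-trapping Assumption~\ref{assum:potential-main} should give the Neumann and Dirichlet boundary trace estimates advertised in the abstract, which are exactly what upgrade $L^2$ control to $\mathcal{H}^1_k$ control.

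**Plan of proof.** The plan is to build the proof on top of the elliptic estimate (Proposition~\ref{prop:elliptic1}) and the boundary trace estimates. First I would record that $R_1, R_2$ are bounded and, using the Rellich/Morawetz multiplier $(x-1,y)\cdot\nabla$ together with condition \eqref{eqn:constants}, that they are in fact \emph{strict} contractions on $L^2(A)$ in a suitable sense — more precisely, I expect a bound of the form $\norm{R_j f}_{L^2(A)}^2 \le \norm{f}_{L^2(A)}^2 - c\,\|(\text{something involving } u_j)\|^2$, where the defect term controls a full interior $H^1_k$-norm of $u_j$ and hence, via trace estimates, the $\mathcal{H}^1_k(A)$-norm of the Dirichlet and Neumann traces of $u_j$. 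For injectivity of $I - R_1 R_2$: if $(I-R_1R_2)f = 0$ then $f = R_1 R_2 f$, so $\norm{f}_{L^2(A)} = \norm{R_1 R_2 f}_{L^2(A)} \le \norm{R_2 f}_{L^2(A)} \le \norm{f}_{L^2(A)}$, forcing equality throughout; the strict-contraction defect then forces the associated solutions $u_1, u_2$ to vanish, and unwinding the impedance data gives $f = 0$. For the mapping property into $\mathcal{H}^1_k(A)$: I would write $(I - R_1 R_2) f = (f - R_2 f) + (R_2 f - R_1 R_2 f)$ and show each difference $g - R_j g$ lies in $\mathcal{H}^1_k(A)$ with norm controlled by $\norm{g}_{L^2(A)}$; the point is that $g - R_j g = 2ik u_j|_A + (\text{incoming}-\text{outgoing normal data cancellation})$, i.e. $f - R_j f$ is (up to constants) $2ik\, u_j|_A$ restricted appropriately, whose $\mathcal{H}^1_k(A)$-norm is exactly a weighted Dirichlet trace of $u_j$ — estimated by the trace lemma.

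**Surjectivity and the quantitative inverse.** For surjectivity onto $\mathcal{H}^1_k(A)$ with bounded inverse, I would run a Fredholm/Neumann-series-type argument: having shown $I - R_1 R_2$ is injective and has closed range (closed range follows from the coercivity estimate $\norm{(I-R_1R_2)f}_{\mathcal{H}^1_k(A)} \gtrsim \norm{f}_{L^2(A)}$, which itself comes from combining the contraction defect with the trace estimates), it remains to identify the range. One route: show the adjoint problem has trivial kernel by the same reflection/non-trapping argument applied to $V_2, V_1$ in the opposite order (the hypotheses are symmetric in $j=1,2$), so that the range is dense; density plus closedness gives surjectivity. The quantitative bound $\norm{W h}_{L^2(A)} \le C(k) \norm{h}_{\mathcal{H}^1_k(A)}$ then falls directly out of the coercivity constant, with the $k$-dependence tracked through the constant $c$ in \eqref{eqn:constants} and the trace estimates.

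**Main obstacle.** The hard part will be the coercivity estimate $\norm{(I - R_1 R_2) f}_{\mathcal{H}^1_k(A)} \gtrsim \norm{f}_{L^2(A)}$ — equivalently, extracting a quantitative lower bound from the chain $\norm{R_1 R_2 f} \le \norm{R_2 f} \le \norm{f}$ when these are near-equalities. Near equality means $u_2$ is nearly a "pure outgoing'' solution and $u_1$ (driven by $R_2 f$) likewise, and one must show that in that regime $f - R_1 R_2 f$ cannot be small in $\mathcal{H}^1_k$; this is precisely where the non-trapping condition \eqref{eqn:constants} and the Morawetz identity must be used to prevent energy from concentrating, and where the interplay between the two subdomains' geometry enters. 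Getting the frequency dependence of $C(k)$ explicit (rather than just finite) will require care in how the trace estimates and the Rellich identity are quantified in $k$.
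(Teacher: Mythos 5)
Your proposal captures the right intuitions (the $R_j$ are contractions with a defect controlled by the boundary data on $\pa S\backslash A$; $f-R_j f = 2ik\,u_j|_A$ is the Dirichlet trace), but it has a genuine gap at its center that you yourself flag: the coercivity estimate $\norm{(I-R_1R_2)f}_{\mathcal{H}^1_k(A)} \gtrsim \norm{f}_{L^2(A)}$. You propose to deduce this ``from the contraction defect plus the trace estimates,'' but, as you observe in your final paragraph, the near-equality regime is exactly where the contraction defect degenerates and you give no concrete mechanism to rule out $f-R_1R_2f$ being small there. Note also that a direct coercivity estimate of this kind would prove the quantitative Theorem \ref{thm:Main-W}, which needs the extra hypothesis \eqref{assum:difference} that $V_1$ and $V_2$ are close; Theorem \ref{thm:injective} holds without that hypothesis, so you should not expect a uniform coercivity constant, and surjectivity cannot be harvested from coercivity alone.

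The paper sidesteps this via a decomposition missing from your plan: $I-R_1R_2 = (I-R_1)(I+R_2) - (R_2-R_1)$. This is the crucial structural step. It reduces everything to (a) lower bounds on $I\pm R_j$ \emph{separately} — and since $(I-R_j)f = 2iku_j|_A$ and $(I+R_j)f = 2\pa_\nu u_j|_A$, these are exactly Dirichlet and Neumann trace lower bounds for the impedance problem, attacked via Rellich/Morawetz identities and a Green's function argument in Propositions \ref{prop:weak} and \ref{prop:weak2} — together with (b) compactness of $R_2-R_1:L^2(A)\to \mathcal{H}^1_k(A)$ (Lemma \ref{lem:difference}, a smoothing estimate). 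Writing $T=(I-R_1)(I+R_2)$, which is invertible by Proposition \ref{prop:Main}, one gets $T^{-1}(I-R_1R_2) = I - T^{-1}(R_2-R_1)$, a Fredholm operator of index zero; injectivity then forces surjectivity, and boundedness of $W$ is the bounded inverse theorem. Your ``adjoint has trivial kernel'' route is not worked out (it is not even clear what inner product you would pair against for an operator landing in $\mathcal{H}^1_k(A)$), and the closed range it relies on is precisely the unproven coercivity.

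A smaller remark on your injectivity argument: equality in $\norm{R_2 f}=\norm{f}$ forces $u_2$ (and hence its impedance data) to vanish on $\pa S\backslash A$, and you then need unique continuation from Cauchy data on an open boundary piece to conclude $u_2\equiv 0$. That works for $C^1$ potentials, but the paper's gluing construction — building $v$ on the merged rectangle out of $w_1$ and the reflected $-u_2$, checking continuity of the Cauchy data across the interface, and appealing to uniqueness of the rectangle's impedance problem — achieves the same thing with lighter machinery and no unique continuation theorem.
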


In particular, this theorem allows for $R_{\Omega}$ to be recovered from the ItI operators of $S_1$ and $S_2$. Next, we obtain explicit estimates on the operator norm of $W$ under control on the difference $\norm{V_2-V_1}_{L^{\infty}(S)}$.

\begin{thm} \label{thm:Main-W}
{Let $V_1, V_2$ satisfy  Assumption \ref{assum:potential-main} and let  $\delta>0$.
 Then, there exist  $\eps_\delta>0$ and $C_{\delta}>0$ such that the following holds. 
  Given  $k>0$, suppose that 
\begin{equation} \label{assum:difference}
\norm{V_2-V_1}_{L^{\infty}(S)} \leq \eps_\delta(1+k)^{-3(1+\delta)}.
\end{equation}
Then,  for $R_1, R_2$ as defined in \eqref{eqn:R-Omega}, the operator $W$ in \eqref{merge} satisfies
\begin{align} \label{eqn:W norm}
\norm{Wg}_{L^2(A)} \leq C_{\delta}\,(1+k)^{3(1+\delta)}\left(\norm{g}_{L^2(A)}  + \norm{\tfrac{1}{k}\pa_\tau g}_{L^2(A)} \right)
\end{align}
for all $g \in \mathcal{H}^1_k(A)$.
In addition, the constant $C_\delta$ depends only on $\delta$, the   constant in \eqref{eqn:constants}, and the $C^1(S)$-norms of $V_1$, $V_2$.
}
\end{thm}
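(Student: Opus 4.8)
The plan is to reduce the quantitative bound on $W = (I-R_1R_2)^{-1}$ to the case of a single potential plus a perturbation argument. First I would establish the result when $V_1 = V_2 = V$, i.e. bound $(I-R^2)^{-1}$ where $R = R_1 = R_2$ is the ItI operator of the single square $S$ with potential $V$. In this symmetric situation, the key structural fact — which I expect is the content of the Neumann and Dirichlet boundary trace estimates advertised in the abstract — is a coercivity-type lower bound of the form
\begin{equation*}
\norm{(I-R^2)f}_{L^2(A)} \geq c\,(1+k)^{-3(1+\delta)}\left(\norm{f}_{L^2(A)} + \norm{\tfrac1k \pa_\tau f}_{L^2(A)}\right)
\end{equation*}
obtained by testing the impedance problem against well-chosen vector fields. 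Concretely, given $f$, one solves \eqref{eqn:u-i1} with $u = u_1$, and $R^2 f$ is produced by feeding the outgoing data back in; the difference $(I-R^2)f$ controls the incoming minus outgoing traces on $A$, which by a Rellich/Morawetz identity with the radial field $(x-1,y)\cdot\nabla$ — this is exactly where Assumption \ref{assum:potential-main} and the constant $c$ in \eqref{eqn:constants} enter, providing positivity of the bulk term $2V_j + (x-1,y)\cdot\nabla V_j$ — bounds the full $\mathcal H^1_k(A)$-norm of $f$ from below, with the polynomial loss $(1+k)^{-3(1+\delta)}$ coming from tracking $k$-dependence through the trace inequalities near the corners of the square (the $\delta$ being a concession to the polygonal geometry). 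Since $R$ is unitary on $L^2$, $I - R^2$ is injective and the range is closed by this estimate, giving the single-potential bound on $W$.

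Second, I would treat the general case by writing $R_1 R_2 = R_2^2 + (R_1 - R_2)R_2$ and using a resolvent/Neumann-series perturbation: $I - R_1 R_2 = (I - R_2^2)\bigl(I - (I-R_2^2)^{-1}(R_1-R_2)R_2\bigr)$. For this to be invertible with the claimed bound it suffices that
\begin{equation*}
\norm{(I-R_2^2)^{-1}(R_1-R_2)R_2}_{L^2(A)\to L^2(A)} \leq \tfrac12,
\end{equation*}
which by the single-potential estimate holds provided $\norm{R_1 - R_2}_{L^2(A) \to \mathcal H^1_k(A)}$ is smaller than $c_\delta (1+k)^{-3(1+\delta)}$. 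So the remaining analytic input is a stability estimate: $\norm{R_1 - R_2}$ is controlled by $\norm{V_1 - V_2}_{L^\infty(S)}$ times at most $(1+k)^{3(1+\delta)}$ — or better — which follows from subtracting the two elliptic problems \eqref{eqn:u-i1}, so that $w = u_1 - u_2$ solves $(\Delta + k^2 V_1)w = -k^2(V_1 - V_2)u_2$ with homogeneous impedance data, and then applying the elliptic estimate of Proposition \ref{prop:elliptic1} together with the trace estimates to $w$; the right side is $O(k^2\norm{V_1-V_2}_{L^\infty}\norm{u_2}_{L^2})$ and $\norm{u_2}$ is itself bounded in terms of $f$ with polynomial $k$-loss. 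Choosing $\eps_\delta$ so that $\eps_\delta(1+k)^{-3(1+\delta)}$ absorbed against these bounds makes the perturbation term $\leq \tfrac12$ gives \eqref{assum:difference}, and the Neumann series then yields \eqref{eqn:W norm} with $C_\delta$ twice the single-potential constant, depending only on $\delta$, the constant in \eqref{eqn:constants}, and the $C^1$-norms of the $V_j$ (the latter entering through the $\nabla V_j$ term in the Rellich identity and through Proposition \ref{prop:elliptic1}).

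The main obstacle I anticipate is the first step: proving the coercive lower bound on $I - R^2$ with explicit, and essentially sharp, polynomial dependence on $k$. The difficulty is twofold. The algebraic difficulty is that $I - R^2$ only sees the combination of incoming and outgoing data on the single edge $A$, so one must relate $\norm{(I-R^2)f}$ to a quantity — like $\norm{\pa_\nu u}_{L^2(A)}^2 - k^2\norm{u}_{L^2(A)}^2$, or the imaginary part $\mathrm{Im}\,\langle \pa_\nu u, u\rangle$ — that a Rellich identity actually controls, and then close the loop to recover the full $\mathcal H^1_k(A)$-norm of $f$ rather than just of the trace of $u$; this is where one genuinely needs the non-trapping hypothesis to prevent the solution from concentrating away from $A$. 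The analytic difficulty is the corner behavior: on the polygonal square the multiplier identity produces boundary terms along $\pa S \setminus A$ and at the four corners, and controlling these with the correct power of $k$ — rather than losing an uncontrolled amount — is what forces the $3(1+\delta)$ exponent and the $\delta$-dependent constant, and is presumably the technical heart of the companion trace estimates (Proposition \ref{prop:elliptic1} and its refinements) that the paper reserves for the detailed proof.
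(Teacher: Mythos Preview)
Your overall architecture --- reduce to the single-potential case and then perturb via $R_1R_2 = R_2^2 + (R_1-R_2)R_2$ --- is correct and is essentially the paper's strategy. The paper writes it as the algebraically equivalent decomposition
\[
(I-R_1R_2)f = (I-R_1)(I+R_2)f - (R_2-R_1)f,
\]
and the stability estimate you describe for $R_1-R_2$ is exactly the paper's Lemma~\ref{lem:difference} (your bound on $w=u_1-u_2$ via Proposition~\ref{prop:elliptic1} is right, and in fact there is no $k$-loss there: $\norm{(R_2-R_1)f}_{\mathcal{H}^1_k(A)} \leq C\norm{V_2-V_1}_{L^\infty}\norm{kf}_{L^2(A)}$). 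So the perturbation half is fine.

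The gap is in your single-potential step. First, two slips: (i) the compressed operator $R$ on $L^2(A)$ is \emph{not} unitary --- only the full ItI map on $L^2(\pa S)$ is --- so your injectivity argument fails; the paper proves injectivity by gluing the two solutions into a solution on the doubled rectangle with homogeneous impedance data (proof of Theorem~\ref{thm:injective}). (ii) Your coercivity inequality has the norms reversed: the theorem asserts $\norm{(I-R^2)f}_{\mathcal{H}^1_k(A)} \gtrsim k(1+k)^{-3(1+\delta)}\norm{f}_{L^2(A)}$, not an $L^2$-norm on the left controlling a stronger norm on the right.

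More substantively, the paper does not attempt a direct Rellich bound on $I-R^2$. It \emph{factors} $I-R^2 = (I-R)(I+R)$ and proves separate lower bounds on each factor (Proposition~\ref{prop:Main}). Since $(I-R)f = 2iku|_A$ and $(I+R)f = 2\pa_\nu u|_A$, the factor $(I-R)$ demands a lower bound on the Dirichlet trace of $u$ on $A$, and $(I+R)$ demands a lower bound on the Neumann trace. The $(I-R)$ bound (Proposition~\ref{prop:weak}) does come from a multiplier identity with the field $(x-1,y)\cdot\nabla$ and carries \emph{no} $k$-loss. The entire $(1+k)^{-3(1+\delta)}$ loss lives in the $(I+R)$ bound (Proposition~\ref{prop:weak2}), and its source is not corner behavior at all: it comes from a Green's function argument in which one must show that Neumann quasimodes with frequency near $k^2$ cannot have small Dirichlet trace on $\pa S\setminus A$ (Lemma~\ref{prop:efnN}), followed by repeated use of the Sobolev trace theorem, each application costing a fractional power of $k$. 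Your proposal misattributes the loss to the corners and never anticipates this spectral/trace mechanism, which is the genuine technical core.
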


In the theorems above, and throughout this work,  ${\pa_\tau} g$ denotes the tangential derivative of $g$.  We note that the bound \eqref{eqn:W norm} is equivalent to stating that $W = (I-R_1R_2)^{-1}: \mathcal{H}^1_k(A) \to L^2(A)$ is bounded with operator norm
\begin{align*}
\|W\| \leq C_{\delta}\,k^{-1}(1+k)^{3(1+\delta)}.
\end{align*}
We emphasize here that our estimate on $W$ is explicit in the frequency $k$. Using \eqref{assum:difference}, the estimate on $W$ in Theorem \ref{thm:Main-W} will follow from estimates on the operators $I\pm R$. We will prove these estimates on $I\pm R$ in a more general setting, for convex polygons and a potential with a non-trapping assumption (see Theorem \ref{thm:Main} below).   In Section \ref{sec:example1}, we compute explicit solutions to the impedance problem on the square (with a constant potential) in order to discuss the sharpness of the spaces and frequency bounds in our theorems. In particular, we show that $W$ is not bounded as an operator from $L^2(A)$ to itself. We also show that the constant $(1+k)^{3(1+\delta)}$ appearing in Theorem \ref{thm:Main-W} cannot be replaced by $(1+k)^{\alpha}$ for any power $\alpha<\tfrac{1}{2}$. The factor of $(1+k)^{3(1+\delta)}$ appears when bounding the Dirichlet data of $u_j$ on $A$ in terms of its Neumann data on $A$, where $u_j$ satisfies \eqref{eqn:u-i1} (see Proposition \ref{prop:weak2}). In Remark \ref{rem:sharpness} below we pinpoint the estimates in the proof where an improved dependence on $k$ would lead to a smaller power of $1+k$ in the estimate in \eqref{eqn:W norm}.

We prove a very similar statement to Theorem \ref{thm:Main-W} in Section \ref{sec:obs} for the setting of scattering obstacles inside the regions $S_1$ and $S_2$ instead of potentials. Obstacle scattering is widely used in applications, and see, for example, \cite{pedneault2017schur}, where a similar merge procedure to the above is used in this setting. For the obstacle scattering problem, it is not possible to obtain bounds on the impedance problem that are uniform in small $k$ (see Theorem A.6 in \cite{graham2019helmholtz}). For $k$ bounded below, in Section \ref{sec:obs} we obtain quantitative (non-sharp) estimates in terms of $k$ on the analogous merge operators in this setting.


A key step in the proof of these theorems is to obtain a lower bound on the Neumann and Dirichlet traces on $A$ of the solutions, $u_j$, to the impedance problem. {We carry this out for a class of convex polygons in the proofs of Propositions \ref{prop:weak} and \ref{prop:weak2}. The bounds are obtained using appropriately chosen vector fields, adapted to the polygon, and Green's function estimates. These estimates are new and of independent interest, as} estimating the Dirichlet and Neumann boundary data of Laplace eigenfunctions (for example,  \cite{barnett2018comparable}, \cite{christianson2017triangle}) and solutions of the wave equation (for example, \cite{bardos1992sharp}, \cite{tataru-trace}) are well studied questions. We in particular highlight \cite{barnett2018comparable}, where these estimates are used for another numerical purpose, namely to show how to obtain tight inclusion bounds for Dirichlet and Neumann eigenvalues.

\subsection{Applications to the \cite{gillman2015spectrally} merge procedure} \label{rem:rescaling}


Recall that the merge process described above in \eqref{merge-eqn1}, \eqref{merge-eqn2}, and \eqref{merge-eqn3} is the procedure given in \cite{gillman2015spectrally}. There, they propose a numerical method for constructing the Dirichlet-to-Neumann operator of the problem
$$
\begin{cases}
    \Delta u + k^2(1-b)u  = 0 &\text{ in } \Omega, \\
 u  = h &\text{ on } \pa\Omega,
\end{cases}
$$
where $b$ specifies the deviation of the wave speed from the wave speed of a constant background and $V = 1-b$ plays the role of the non-trapping potential above.
This problem has a unique solution $u \in H^1(\Omega)$ for each $h\in H^1(\pa\Omega)$ and for $k$ away from a discrete set of resonant wavenumbers. The  method proposed in \cite{gillman2015spectrally} involves  solving this Dirichlet problem when $\Omega$ is a square or rectangular planar domain, in terms of its ItI operator $R_{\Omega}$. To construct $R_{\Omega}$, they partition $\Omega$ into a hierarchical tree of square or rectangular boxes {$\{\Omega^{\tau}\}_\tau$} (see \cite[Figure 2]{gillman2015spectrally}), and spectrally approximate the ItI operator on each leaf box. To approximate $R_{\Omega}$ they then use the above merge process. More precisely, if $\Omega^{\alpha}$ and $\Omega^{\beta}$ are squares or rectangles with one common side such that $\Omega^{\tau} = \Omega^{\alpha}\cup\Omega^{\beta}$, then $R_{\Omega^{\tau}}$ is constructed in terms of $R_{\Omega^{\alpha}}$ and $R_{\Omega^{\beta}}$ via a merge operator $W^{\alpha\beta}$. 

{In Section \ref{sec:example2}, we show an important application of our main theorems to this merge procedure. Namely, we will show that estimate \eqref{eqn:W norm} implies that the operators $WQ_1$ and $WR_1Q_2$ appearing in \eqref{merge-eqn3} are bounded from $L^2(A)$ to itself, with operator norm uniformly bounded for small $k$. This is crucial in order to allow the merge process to be applied iteratively.} 

For simplicity, we consider the case of merging two squares, but our methods allow for merging rectangles works in exactly the same way. To instead merge two squares $\Omega^{\alpha}$, $\Omega^{\beta}$ of side length $r<1$ (instead of $r=1$), we can first rescale the problem to unit scale. This has the effect of changing $k$ to $rk$ and the potentials $V_j(x,y)$ to $V_j(rx,ry)$ for which Assumptions \ref{assum:potential-main} and \eqref{assum:difference} continue to hold. Since the constant in Theorem \ref{thm:Main-W} is uniform in small $k$, the estimate in the theorem remains valid for the merge operator $W^{\alpha\beta}$ of all smaller squares. This is crucial because, for example, in \cite{gillman2015spectrally}, they use these merge operators for their hierarchical tree of rectangular boxes of decreasing size. Our theorems therefore provide estimates that guarantee the existence and boundedness of $W^{\alpha\beta}$ on appropriate spaces.  In \cite{gillman2015spectrally}, the invertibility and boundedness of $W^{\alpha\beta}$ is simply assumed, whereas here we are able to provide quantitative bounds on the operators of the type $W^{\alpha\beta}$.  Boundedness of a similar merge operator for an obstacle scattering problem was established in \cite{pedneault2017schur}, but without quantitative control, as we provide in Section \ref{sec:obs}.



\subsection*{Outline of the paper} For the rest of the paper we proceed as follows. In Section \ref{sec:main}, we discuss the proofs of the two theorems and record some of the elliptic estimates satisfied by the impedance-to-impedance operators $R_j$. In particular, we show that the proofs of the theorems reduce to estimates on the boundary behavior of solutions of the impedance problem on the square. In Section \ref{sec:proof}, we establish this boundary behavior for the impedance problem for a class of convex polygons. This involves a lower bound on the Dirichlet traces for solutions of a related Neumann problem. We consider some quantitative bounds that follow from our analysis for the case of scattering by a convex obstacle in Section \ref{sec:obs}, and to do this, we require some microlocal analytic bounds that we discuss in Appendix \ref{app:obscont}.  In Section \ref{sec:example}, we discuss the sharpness of the estimates in Theorems \ref{thm:injective} and \ref{thm:Main-W}, and provide a further discussion of how the estimates relate to the use of the operator $W$ in the numerical scheme in \cite{gillman2015spectrally}. 

\subsection*{Acknowledgements} TB was supported by NSF Grant DMS-2042654. YC was supported by the Alfred P. Sloan Foundation and NSF Grant DMS-1900519.  JLM was supported in part by NSF CAREER Grant DMS-1352353 and NSF Applied Math Grant DMS-1909035.  JLM also thanks MSRI for hosting him during the outset of this research project. The authors would like to thank Alex Barnett for introducing us to the problem and giving valuable feedback on a draft of the result.  We also thank Dean Baskin, Adrianna Gillman and Euan Spence for helpful discussions on numerical methods for the Helmholtz equation and comments on early versions of the results.  We also thank Nicolas Burq for insightful conversations about his work on boundary control theory that allowed us to extend to the obstacle case.
\section{Discussion of the estimates for the merge operator $W$} \label{sec:main}
In this section we show how Theorems \ref{thm:injective} and \ref{thm:Main-W} reduce to estimates on the boundary behavior of solutions of the impedance problem. Before considering the merge operator $W$ we first record estimates for the operators $R_j$ from Definition \ref{defn:R-Omega}. 
\begin{prop} \label{prop:elliptic1}
{Let $V \in C^\infty(S)$ satisfy  \eqref{eqn:constants} in Assumption \ref{assum:potential-main}, and let $k>0$.
Then, for $G\in L^2(S)$, $g\in L^2(\pa S)$, the problem }
\begin{equation} \label{eqn:u-i}
\begin{cases}
(\Delta + k^2V)u  = G & \text{ in } S \\
\pa_{\nu} u + iku  = g &\text{ on } \pa S  
\end{cases}
\end{equation}
 has a unique solution $u\in H^1(S)$. Moreover, there exists a constant $C$ (independent of $k$) such that
\begin{align*}
\norm{u}_{\mathcal{H}^1_k(S)}  + \norm{ku}_{L^2(\pa S)} + \norm{\pa_\nu u}_{L^2(\pa S)} + \norm{\pa_\tau u}_{L^2(\pa S)} \leq C\left(\norm{G}_{L^2(S)} + \norm{g}_{{L^2(\partial S)}}\right).
\end{align*}
\end{prop}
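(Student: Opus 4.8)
## Proof Proposal for Proposition~\ref{prop:elliptic1}

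The plan is to establish existence and uniqueness via the Lax--Milgram theorem applied to the weak formulation, and then to obtain the stated a priori estimate in two stages: first the interior $\mathcal{H}^1_k(S)$ bound by testing against $u$ itself, and then the boundary trace bounds by a vector field (Rellich--Ne\v{c}as--Payne--Weinberger) multiplier argument.

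First I would set up the weak formulation: $u\in H^1(S)$ solves \eqref{eqn:u-i} if and only if, for all $\varphi\in H^1(S)$,
\begin{equation*}
-\int_S \nabla u\cdot\overline{\nabla\varphi}\,\mathrm{d}x + k^2\int_S V u\overline{\varphi}\,\mathrm{d}x - ik\int_{\pa S}u\overline{\varphi}\,\mathrm{d}\sigma = \int_S G\overline{\varphi}\,\mathrm{d}x - \int_{\pa S}g\overline{\varphi}\,\mathrm{d}\sigma,
\end{equation*}
using the impedance boundary condition to rewrite the Neumann trace. Uniqueness follows by taking $\varphi = u$ with $G=0$, $g=0$: the imaginary part of the resulting identity gives $k\norm{u}_{L^2(\pa S)}^2 = 0$, hence $u\equiv 0$ on $\pa S$, and then $u$ solves a homogeneous Dirichlet/Neumann problem which forces $u\equiv 0$ (here one uses $V\geq \tfrac12 c>0$ from \eqref{eqn:constants} so there is no issue with the sign of $k^2 V$, or alternatively a unique continuation / direct energy argument). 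For existence, the sesquilinear form is not coercive because of the $+k^2 V$ term of the wrong sign relative to the gradient term; the standard remedy is to write it as coercive-plus-compact (Fredholm alternative) — the operator $\varphi\mapsto k^2 V\varphi$ on $H^1(S)$ followed by the $L^2$-pairing is compact by Rellich — so injectivity implies surjectivity. This is essentially Proposition~2.1 of \cite{gillman2015spectrally}, which I may cite.

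Next, for the $\mathcal{H}^1_k(S)$ estimate I would take $\varphi = u$ in the weak formulation and examine real and imaginary parts. The imaginary part yields
\begin{equation*}
k\norm{u}_{L^2(\pa S)}^2 \leq \norm{G}_{L^2(S)}\norm{u}_{L^2(S)} + \norm{g}_{L^2(\pa S)}\norm{u}_{L^2(\pa S)},
\end{equation*}
while the real part gives $\norm{\nabla u}_{L^2(S)}^2 = k^2\int_S V|u|^2 - \mathrm{Re}\!\left(\int_S G\overline{u} - \int_{\pa S}g\overline{u}\right)$, so the gradient is controlled once $\norm{ku}_{L^2(S)}$ is. The subtlety is that $k^2\int_S V|u|^2$ appears with a \emph{good} sign in the wrong place, so I cannot bound $\norm{ku}_{L^2(S)}$ from this alone. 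To close this I would invoke the non-trapping structure: testing with a radial vector field multiplier $\varphi = (x-1,y)\cdot\nabla u$ (or rather its regularized/integrated-by-parts version) and using the Morawetz-type identity, the condition $2V_j + (x-1,y)\cdot\nabla V_j \geq c$ in \eqref{eqn:constants} is exactly what produces a positive lower bound of the form $c\,\norm{ku}_{L^2(S)}^2 \lesssim (\text{boundary terms}) + \norm{G}_{L^2(S)}^2$. Combining with the energy identity then controls $\norm{u}_{\mathcal{H}^1_k(S)}$ and $\norm{ku}_{L^2(\pa S)}$ by the right-hand side — this is the analogue of Proposition~\ref{prop:weak}/\ref{prop:weak2} specialized to the simplest case, and I expect it to be the main obstacle, since handling the corner contributions of the square in the Rellich identity (where $\nabla u$ need not be bounded) requires care — one approximates by smooth domains or uses that the vector field $(x-1,y)$ points favorably relative to the outward normal on each of the four edges of $S$.

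Finally, once $\norm{u}_{\mathcal{H}^1_k(S)}$, $\norm{ku}_{L^2(\pa S)}$, and (from the Morawetz identity) $\norm{\pa_\nu u}_{L^2(\pa S)}$ are bounded, the tangential derivative bound $\norm{\pa_\tau u}_{L^2(\pa S)}$ follows from $\pa_\tau u = \pa_\nu u + iku - g - (\pa_\nu u + iku - \pa_\tau u)$; more directly, on each straight edge of $S$ one has $\pa_\nu u + iku = g$, so differentiating tangentially along the edge and using an edgewise elliptic/trace estimate — or simply noting $\norm{\pa_\tau u}_{L^2(\pa S)} \lesssim \norm{\nabla u}_{L^2(\pa S)}$ and controlling the full gradient trace via the Rellich identity together with $\norm{\pa_\nu u}_{L^2(\pa S)}$ and a tangential commutator — gives the claim. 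The Neumann trace $\pa_\nu u = g - iku$ on $\pa S$ then also gives $\norm{\pa_\nu u}_{L^2(\pa S)} \leq \norm{g}_{L^2(\pa S)} + \norm{ku}_{L^2(\pa S)}$ as a cross-check. Assembling these four bounds and absorbing $\norm{u}_{L^2(S)} \leq k^{-1}\norm{ku}_{L^2(S)}$ (harmless since one may assume $k\geq 1$, or handle small $k$ separately using $V\geq\tfrac12 c$) yields the stated inequality with a constant independent of $k$.
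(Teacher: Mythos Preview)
Your proposal is correct and follows essentially the same route as the paper: weak formulation with uniqueness via the imaginary part of the energy identity, existence via Fredholm (the paper simply cites \cite{melenk1995generalized}), and the a priori bound via the Morawetz/Rellich multiplier $v=\mathbf{z}\cdot\nabla u$ with base point at the vertex $(1,0)$, exploiting precisely the non-trapping inequality \eqref{eqn:constants} to make the interior term $k^2\int_S\big(V+\tfrac12\mathbf{z}\cdot\nabla V\big)|u|^2$ coercive. One small technical point you should be aware of when you write out details: with the unshifted field $\mathbf{z}=(x-1,y)$ one has $\mathbf{z}\cdot\nu=0$ on the right and bottom edges of $S$, so the boundary term $\tfrac12\int_{\partial S}(\mathbf{z}\cdot\nu)|\nabla u|^2$ does not control $\|\nabla u\|_{L^2}$ there; the paper remedies this by taking $\mathbf{z}=(x-1+\eps,\,y-\eps)$ for small $\eps>0$, which gives $\mathbf{z}\cdot\nu\geq\eps$ on all four edges while still preserving $2V+\mathbf{z}\cdot\nabla V\geq\tfrac12 c$. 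With that shift your argument closes exactly as you outline, and $\|\partial_\tau u\|_{L^2(\partial S)}$ drops out of the full gradient trace bound rather than from any separate manipulation.
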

In the above proposition, we use the norm 
\begin{align*}
\norm{v}_{\mathcal{H}^1_k(S)} & = \norm{kv}_{L^2(S)} + \norm{\nabla v}_{L^2(S)},
\end{align*}
for functions $v\in H^1(S)$. We also let $\mathcal{H}^1_k(S)\subset L^2(S)$ be the Sobolev space defined using the norm  $ \norm{\cdot}_{\mathcal{H}^1_k(S)}$.

\begin{proof}
In the case of constant potential $V$, 
the control on $\norm{u}_{\mathcal{H}_k^1(S)}$ is given in \cite[Proposition 8.1.4]{melenk1995generalized}. Also, in this case, the control on $u$ and $\nabla u$ on $\pa S$ follow from equation (8.1.5) and the displayed equation above (8.1.7) in \cite{melenk1995generalized}.

The method of proof from \cite{melenk1995generalized} still applies in the case of non-constant potential, provided Assumption \ref{assum:potential-main} holds.
{To see this, as in \cite{melenk1995generalized}, one uses the bilinear form
$$B(u,v)=\int_{S} \nabla u \cdot \nabla \overline{v} \,\dv -k^2 \int_S V \,u \,\overline{v} \dv +ik \int_{\partial S} u \, \overline{v} \, \ud s,$$
where we write ${\bf x}=(x,y)$, $\dv$ for the Euclidean measure on $S$, and $\ud s$ for the induced measure on $\partial S$.
The estimates on $u$ are obtained by using the test function $v({\bf x})= {\bf z}\cdot \nabla u({\bf x})$, and an integration by parts argument, where $\textbf{z}$ is in the interior of $S$. Here we choose ${\bf z} = (x-1+\eps, y-\eps)\in S$, with $\eps>0$ a small constant chosen to ensure that Assumption \ref{assum:potential-main} guarantees that
\begin{align*}
    2V(x,y) + (x-1+\eps,y-\eps)\cdot \nabla V(x,y) \geq \tfrac{1}{2}c.
\end{align*}
Then, the only change in the proof when $V$ is non-constant is  that when integrating by parts one needs to differentiate the potential. Indeed, one obtains
\begin{align}\label{eqn:B}
\text{Re} B(u,v)
&=k^2\int_{S} V |u|^2 \,\dv+ \frac{k^2}{2}\int_{S} ({\bf z}\cdot \nabla V )|u|^2 \,\dv +\frac{1}{2} \int_{\partial S} |\nabla u|^2({\bf z}\cdot \nu) \ud s \notag\\
&  -\frac{k^2}{2} \int_{\partial S} V|u|^2({\bf z}\cdot \nu) \ud s+\text{Re}\, ik \int_{\partial S} u ({\bf z}\cdot \nabla \overline{u}) \ud s .
\end{align}
To obtain the same bound as in the estimate displayed right before \cite[(8.1.7)]{melenk1995generalized} one now uses that Assumption \ref{assum:potential-main} yields
$$
k^2\int_{S} V |u|^2 \,\dv+ \frac{k^2}{2}\int_{S} ({\bf z}\cdot \nabla V )|u|^2 \,\dv \geq \tfrac{1}{2}c  k^2\int_{S}  |u|^2 \,\dv,
$$
and that ${\bf z}\cdot \nu \geq \eps >0$ on $\pa S$.
The rest of the proof remains unchanged.
}
\end{proof}

\begin{rem} \label{rem:non-trapping} The estimate on $V$ coming from \eqref{eqn:constants} in Assumption \ref{assum:potential-main} is used in a crucial way in the above proof. This assumption is natural because it guarantees that $V$ is non-trapping in the following sense. Let $q_0 = (1,0) \in S$ and $r>0$ be such that $B(q_0, r)\subset S$. Then, there exists a time $t_r>0$ such that any trajectory $q(t)$ with $q(0)=q_0$ and corresponding to a bicharacteristic $(q(t), \xi(t)) \in T^*\R^2$ associated to the Hamiltonian flow induced by $H(q, \xi)=|\xi|^2-V(q)$  will leave the ball $B(q_0, r)$ for all time $t>t_r$. To see this we note that 
$$\frac{d^2}{dt^2}|q(t)-q_0|^2 =  4\big(2V(q(t))+   (q(t)-q_0)\cdot \nabla V (q(t))\big).$$
 Indeed, $\frac{d}{dt}|q(t)-q_0|^2=\{ H, |q-q_0|^2  \}=4 (q(t)-q_0)\cdot \xi(t)$, and so
$\frac{d^2}{dt^2}|q(t)-q_0|^2=\{ H, 4(q-q_0)\cdot\xi  \}=4(2|\xi|^2+(q-q_0) \cdot \nabla V)=4\Big(2V(q(t))+(q(t)-q_0) \cdot \nabla V(q(t))\Big).$
It then follows that Assumption \ref{assum:potential-main} yields  $\frac{d}{dt}|q(t)-q(0)|^2 \geq c$  for all time $t$. Therefore, $|q(t)-q(0)|^2\geq c t^2 - t|\dot q(0)|^2$, and so there exists $t_r>0$ such that $|q(t)-q_0|> r$ for all $t>t_r$.
\end{rem}


{We note that for} $k$ bounded away from $0$, the estimates in Proposition \ref{prop:elliptic1} are also contained in Theorem A.6 in \cite{graham2019helmholtz}, where they obtain estimates on the solution to the impedance problem under Assumption \ref{assum:potential-main} on $V_j$ and a class of variable coefficient operators. 

\begin{lem} \label{lem:R-boundedness}
{Let $V_1, V_2$ satisfy  Assumption \ref{assum:potential-main}, and let $k>0$.} 
Then, there exists {$C>0$}, depending only on the constant  in \eqref{eqn:constants}, such that {for $j=1,2,$  and $f \in L^2(A)$},
\begin{align*}
\norm{R_jf}_{L^2(A)} \leq C\norm{f}_{L^2(A)}, \qquad 
\qquad \norm{(I-R_j)f}_{\mathcal{H}^1_k(A)} \leq C\norm{kf}_{L^2(A)}.
\end{align*}
\end{lem}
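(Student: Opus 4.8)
The strategy is to extract both estimates directly from the elliptic estimate in Proposition~\ref{prop:elliptic1}, applied with $G=0$, $g = f$ on $A$ and $g=0$ on $\pa S\backslash A$. Since $u_j$ solves \eqref{eqn:u-i1}, Proposition~\ref{prop:elliptic1} immediately yields
\[
\norm{u_j}_{\mathcal{H}^1_k(S)} + \norm{ku_j}_{L^2(\pa S)} + \norm{\pa_\nu u_j}_{L^2(\pa S)} + \norm{\pa_\tau u_j}_{L^2(\pa S)} \leq C\norm{f}_{L^2(A)},
\]
with $C$ depending only on the constant in \eqref{eqn:constants}. The first bound, $\norm{R_j f}_{L^2(A)}\leq C\norm{f}_{L^2(A)}$, is then immediate: $R_j f = (\pa_\nu u_j - iku_j)|_A$, so $\norm{R_j f}_{L^2(A)} \leq \norm{\pa_\nu u_j}_{L^2(A)} + \norm{ku_j}_{L^2(A)} \leq \norm{\pa_\nu u_j}_{L^2(\pa S)} + \norm{ku_j}_{L^2(\pa S)}$, which is controlled by the right-hand side above. (In fact, as noted in the text, $R_j$ is unitary, so even $C=1$ works; but we only need boundedness.)

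For the second estimate, the key algebraic observation is that on $A$ we have $\pa_\nu u_j + iku_j = f$, hence
\[
(I - R_j)f = f - (\pa_\nu u_j - iku_j)\big|_A = (\pa_\nu u_j + iku_j)\big|_A - (\pa_\nu u_j - iku_j)\big|_A = 2iku_j\big|_A.
\]
Therefore $\norm{(I-R_j)f}_{\mathcal{H}^1_k(A)} = \norm{2iku_j|_A}_{\mathcal{H}^1_k(A)} = \norm{2k(ku_j)}_{L^2(A)} + \norm{2k\,\pa_\tau u_j}_{L^2(A)}$ — wait, more carefully, by the definition \eqref{defn:H1-A} with $h = 2iku_j|_A$ one gets $\norm{h}_{\mathcal{H}^1_k(A)} = \norm{kh}_{L^2(A)} + \norm{\pa_\tau h}_{L^2(A)} = 2k\big(\norm{ku_j}_{L^2(A)} + \norm{\pa_\tau u_j}_{L^2(A)}\big)$. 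Both $\norm{ku_j}_{L^2(A)} \leq \norm{ku_j}_{L^2(\pa S)}$ and $\norm{\pa_\tau u_j}_{L^2(A)} \leq \norm{\pa_\tau u_j}_{L^2(\pa S)}$ are bounded by $C\norm{f}_{L^2(A)}$ via Proposition~\ref{prop:elliptic1}, so $\norm{(I-R_j)f}_{\mathcal{H}^1_k(A)} \leq Ck\norm{f}_{L^2(A)} = C\norm{kf}_{L^2(A)}$, as claimed.

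There is essentially no serious obstacle here: the lemma is a bookkeeping consequence of Proposition~\ref{prop:elliptic1} together with the identity $(I-R_j)f = 2iku_j|_A$ coming from the impedance boundary condition. The only points requiring a little care are (i) checking that the tangential derivative $\pa_\tau$ on $A$ is indeed the restriction of the tangential derivative on $\pa S$ appearing in Proposition~\ref{prop:elliptic1} (they agree since $A$ is one of the sides of the square), so that the $L^2(A)$-norms are dominated by the corresponding $L^2(\pa S)$-norms; and (ii) noting that all constants inherit the stated dependence only on the constant $c$ in \eqref{eqn:constants}, which is exactly the dependence asserted in Proposition~\ref{prop:elliptic1}. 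Tracking the factor of $2$ and the power of $k$ through the definition of $\norm{\cdot}_{\mathcal{H}^1_k(A)}$ completes the argument.
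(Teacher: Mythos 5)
Your proof is correct and follows exactly the same route as the paper: identify $R_j f = (\pa_\nu u_j - iku_j)|_A$ and $(I-R_j)f = 2iku_j|_A$ via the impedance boundary condition, then invoke Proposition~\ref{prop:elliptic1} with $G=0$, $g=f$ on $A$, $g=0$ on $\pa S\setminus A$. The paper states this in two lines; your write-up simply spells out the bookkeeping.
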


\begin{proof} 
Note that $R_jf= \big( \pa_{\nu}u_j - iku_j \big)\big|_{A}$ and $(I-R_j) f=2i k u_j \big|_A$. Therefore, the result follows from applying  Proposition \ref{prop:elliptic1} with $G=0$ in $S$ and $g = f$ on $A$ and $g = 0$ on $S\backslash A$.
\end{proof}

To prove Theorems \ref{thm:injective} and \ref{thm:Main-W}, {for $f \in L^2(A)$} we obtain a lower bound on $g = (I-R_1R_2)f$ by writing
\begin{equation}
\label{eqn:decomposition}
(I-R_1 R_2)f = (I-R_1)(I+R_2)f - (R_2-R_1)f.
\end{equation}
Together with a straightforward upper bound on $R_2-R_1$, the key ingredient in the proof is a lower bound on $I\pm R_j$. We will prove the following.
\begin{prop} \label{prop:Main}
{Let $V_1, V_2$ satisfy  Assumption \ref{assum:potential-main}, and let $k>0$.} 
Given $\delta>0$, there exist constants $c^*>0$, $c^*_{\delta}>0$, depending  only on $\delta$ and  the  constant in \eqref{eqn:constants}, such that for all {$j=1,2,$ and } $f\in L^2(A)$,
\begin{align*}
\norm{(I-R_j)f}_{\mathcal{H}^1_k(A)} & \geq c^*\norm{kf}_{L^2(A)}, \\
\norm{(I+R_j)f}_{L^2(A)} & \geq c_{\delta}^*(1+k)^{-3(1+\delta)}\norm{f}_{L^2(A)} .
\end{align*}
In addition, the image of $(I+R_j)$ on $L^2(A)$ is $\mathcal{H}^1_k(A)$, and the image of $(I-R_j)$ on $L^2(A)$ is $L^2(A)$.
\end{prop}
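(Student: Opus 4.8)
The plan is to reduce everything to the boundary behaviour of the impedance solution $u_j$ on the edge $A$, and then to feed in the trace inequalities proved in Section~\ref{sec:proof}. Since $f=(\pa_\nu u_j+iku_j)|_A$ and $R_jf=(\pa_\nu u_j-iku_j)|_A$ by Definition~\ref{defn:R-Omega}, I would first record the identities
\begin{equation*}
(I-R_j)f=2ik\,u_j\big|_A,\qquad (I+R_j)f=2\,\pa_\nu u_j\big|_A ,
\end{equation*}
together with the pointwise bound $|f|\le|\pa_\nu u_j|+k|u_j|$ on $A$. Lemma~\ref{lem:R-boundedness} shows $(I-R_j)f\in\mathcal H^1_k(A)$ and $(I+R_j)f=f+R_jf\in L^2(A)$, while Proposition~\ref{prop:elliptic1} (with $G=0$, $g=f$ on $A$, $g=0$ on $\pa S\setminus A$) gives $u_j|_A,\pa_\tau u_j|_A,\pa_\nu u_j|_A\in L^2(A)$. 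With these identities the proposition becomes a statement about the Dirichlet and Neumann traces of $u_j$ on $A$ alone.

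Next I would isolate the two boundary trace inequalities that carry the whole estimate: (a) $\norm{\pa_\nu u_j}_{L^2(A)}\le C\big(\norm{ku_j}_{L^2(A)}+\norm{\pa_\tau u_j}_{L^2(A)}\big)$, controlling the Neumann trace by the Dirichlet data; and (b) $\norm{ku_j}_{L^2(A)}\le C(1+k)^{3(1+\delta)}\norm{\pa_\nu u_j}_{L^2(A)}$, controlling the Dirichlet trace by the Neumann trace at the cost of a power of $1+k$. These are exactly what is established (for a class of convex polygons, hence in particular for the square $S$ with distinguished edge $A$) in the course of Propositions~\ref{prop:weak} and~\ref{prop:weak2} in Section~\ref{sec:proof}, via Rellich/Pohozaev identities using a vector field adapted to the vertex $(1,0)$, with the interior and potential contributions absorbed using \eqref{eqn:constants} and Green's function bounds. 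Granting (a): $\norm{kf}_{L^2(A)}\le k\norm{\pa_\nu u_j}_{L^2(A)}+\norm{k^2u_j}_{L^2(A)}\le Ck\big(\norm{ku_j}_{L^2(A)}+\norm{\pa_\tau u_j}_{L^2(A)}\big)=\tfrac C2\norm{(I-R_j)f}_{\mathcal H^1_k(A)}$. Granting (b): $\norm{f}_{L^2(A)}\le\big(1+C(1+k)^{3(1+\delta)}\big)\norm{\pa_\nu u_j}_{L^2(A)}\le C'(1+k)^{3(1+\delta)}\norm{(I+R_j)f}_{L^2(A)}$. Rearranging gives the two displayed lower bounds, with constants inherited from (a)--(b), i.e. depending only on $\delta$ and the constant in \eqref{eqn:constants}.

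For the image statements, I would first note that Lemma~\ref{lem:R-boundedness} together with the lower bounds just obtained makes $I-R_j\colon L^2(A)\to\mathcal H^1_k(A)$ and $I+R_j\colon L^2(A)\to L^2(A)$ bounded and bounded below, hence injective with closed range, so only surjectivity onto the respective target spaces remains. Given $\phi\in\mathcal H^1_k(A)$, I would solve the mixed boundary value problem $(\Delta+k^2V_j)u=0$ in $S$, $u=(2ik)^{-1}\phi$ on $A$, $\pa_\nu u+iku=0$ on $\pa S\setminus A$: existence follows by Fredholm theory, uniqueness by multiplying by $\bar u$ and taking imaginary parts (which forces $u=\pa_\nu u=0$ on $\pa S\setminus A$) followed by unique continuation, and the same vector-field argument as in Section~\ref{sec:proof} yields $\pa_\nu u|_A\in L^2(A)$. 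Then $f:=(\pa_\nu u+iku)|_A\in L^2(A)$ and, by uniqueness for \eqref{eqn:u-i1}, $u=u_j$, so $(I-R_j)f=2iku|_A=\phi$. Surjectivity of $I+R_j$ onto $L^2(A)$ is handled identically, now solving the mixed problem with the Neumann condition $\pa_\nu u=\tfrac12\psi$ on $A$ for $\psi\in L^2(A)$ in place of the Dirichlet one.

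The technical heart, and the main obstacle, is Section~\ref{sec:proof}: proving (a) and (b), and in particular obtaining the exponent $3(1+\delta)$ in (b) (which then propagates into Theorem~\ref{thm:Main-W}). Within the present proposition, the only point requiring any care is the well-posedness and $L^2(A)$ boundary regularity of the two auxiliary mixed boundary value problems used for the surjectivity claims, but these are minor variants of the argument already carried out for Proposition~\ref{prop:elliptic1}.
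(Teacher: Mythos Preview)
Your proposal follows the paper's approach closely: identify $(I-R_j)f=2iku_j|_A$ and $(I+R_j)f=2\pa_\nu u_j|_A$, reduce the lower bounds to trace inequalities on $A$ coming from Section~\ref{sec:proof}, and handle the range statements via auxiliary mixed boundary value problems on $A$ (Dirichlet on $A$ for $I-R_j$, Neumann on $A$ for $I+R_j$).

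One point worth flagging: the linear bounds you call (a) and (b) are \emph{not} literally what Propositions~\ref{prop:weak} and~\ref{prop:weak2} establish. Those propositions are stated under the normalization $\|f\|_{L^2(A)}=1$ and give the inhomogeneous estimates
\[
\|\pa_\nu u\|_{L^2(A)}\le C\|u\|_{\mathcal H^1_k(A)}^{1/4},\qquad
\|ku\|_{L^2(A)}\le C_\delta(1+k)^{3(1+\delta)/2}\|\pa_\nu u\|_{L^2(A)}^{1/2},
\]
which the paper then feeds into $1\le\|ku\|_{L^2(A)}+\|\pa_\nu u\|_{L^2(A)}$ to obtain the lower bounds \eqref{claimest}. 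Your linear version (a) can indeed be extracted directly from the Rellich computation in the proof of Proposition~\ref{prop:weak} (absorb the cross term $\|\pa_\tau u\|_{L^2(A)}\|\pa_\nu u\|_{L^2(A)}$ by Young's inequality instead of via the normalization). However, (b) as you state it is essentially \emph{equivalent} to the lower bound on $I+R_j$ that you are trying to prove (given the a priori bound $\|ku\|_{L^2(A)}\le C\|f\|_{L^2(A)}$ from Proposition~\ref{prop:elliptic1}), so citing Proposition~\ref{prop:weak2} for (b) is mildly circular; the $\tfrac12$-power form is what the Green's function argument there actually delivers, because Lemma~\ref{lem:control} only controls the complementary boundary data by the geometric mean $(\|\pa_\nu u\|_{L^2(A)}\|ku\|_{L^2(A)})^{1/2}$.

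For the surjectivity of $I-R_j$, the paper appeals to the mixed Dirichlet--Neumann boundary regularity of \cite{brown-mixed} (using that the interior angles at the endpoints of $A$ are strictly less than $\pi$) to get $\pa_\nu w|_A\in L^2(A)$, rather than a vector-field identity; otherwise your surjectivity arguments match the paper's.
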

Since $(I-R_j)f = 2iku_j\big|_{A}$ and $(I+R_j)f = 2\pa_{\nu}u_j\big|_{A}$, Proposition \ref{prop:Main} provides a lower estimate on the Dirichlet and Neumann traces of $u_j$ on $A$. This proposition in fact holds for a wider class of convex polygons, and so in Section \ref{sec:proof} we will prove a more general version of this proposition (see Theorem \ref{thm:Main}). We also have an upper bound on $R_2-R_1$.
\begin{lem} \label{lem:difference}
{Let $V_1, V_2$ satisfy  Assumption \ref{assum:potential-main}, and let $k>0$.} 
There exists a constant $C>0$, depending only on the constant in \eqref{eqn:constants}, such that
\begin{align*}
\norm{(R_2-R_1)f}_{\mathcal{H}^1_k(A)} \leq C\norm{V_2-V_1}_{L^{\infty}(S)}\norm{kf}_{L^2(A)}.
\end{align*}
Moreover, $R_2-R_1:L^2(A) \to \mathcal{H}^1_k(A)$ is a compact operator.
\end{lem}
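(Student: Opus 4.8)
The plan is to estimate the difference $u_2 - u_1$ of the two impedance solutions directly by subtracting the two boundary value problems and applying the elliptic estimate of Proposition \ref{prop:elliptic1}. Writing $w = u_2 - u_1$, subtraction of the two copies of \eqref{eqn:u-i1} gives
\begin{equation*}
\begin{cases}
(\Delta + k^2 V_2) w = -k^2 (V_2 - V_1) u_1 & \text{ in } S, \\
\pa_\nu w + ik w = 0 & \text{ on } \pa S.
\end{cases}
\end{equation*}
This is an impedance problem for $w$ with potential $V_2$, zero boundary data, and interior forcing $G = -k^2(V_2 - V_1) u_1$. Note that $V_2$ satisfies \eqref{eqn:constants} by Assumption \ref{assum:potential-main}, so Proposition \ref{prop:elliptic1} applies and yields
\begin{equation*}
\norm{kw}_{L^2(\pa S)} + \norm{\pa_\nu w}_{L^2(\pa S)} + \norm{\pa_\tau w}_{L^2(\pa S)} \leq C \norm{G}_{L^2(S)} \leq C k^2 \norm{V_2 - V_1}_{L^\infty(S)} \norm{u_1}_{L^2(S)}.
\end{equation*}
To close the estimate I would control $\norm{u_1}_{L^2(S)}$ in terms of $\norm{f}_{L^2(A)}$: applying Proposition \ref{prop:elliptic1} to $u_1$ itself (with $G = 0$, and boundary data $f$ on $A$, $0$ on $\pa S \backslash A$) gives $\norm{k u_1}_{L^2(S)} \leq C \norm{f}_{L^2(A)}$, hence $k^2 \norm{u_1}_{L^2(S)} \leq C k \norm{f}_{L^2(A)}$. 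Combining, and observing that on $A$ we have $(R_2 - R_1) f = (\pa_\nu u_2 - ik u_2) - (\pa_\nu u_1 - ik u_1) = \pa_\nu w - ik w$, so that $\norm{(R_2 - R_1)f}_{\mathcal{H}^1_k(A)} = \norm{k(\pa_\nu w - ik w)}_{L^2(A)} + \norm{\pa_\tau(\pa_\nu w - ik w)}_{L^2(A)}$ — hmm, here I should be slightly careful: the $\mathcal{H}^1_k(A)$ norm involves a tangential derivative of $\pa_\nu w$, which is not directly controlled by Proposition \ref{prop:elliptic1}. Instead I would use the simpler route already built into the paper: $(R_2 - R_1)f = (I - R_1)f - (I - R_2)f$ rewritten via the solutions, but more efficiently, note $(I - R_j)f = 2ik u_j|_A$, so $(R_2 - R_1)f = (I-R_1)f - (I-R_2)f = 2ik(u_1 - u_2)|_A = -2ik w|_A$. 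Then
\begin{equation*}
\norm{(R_2 - R_1)f}_{\mathcal{H}^1_k(A)} = \norm{2ik w}_{\mathcal{H}^1_k(A)} = 2\norm{k^2 w}_{L^2(A)} + 2\norm{k \pa_\tau w}_{L^2(A)} \leq 2k\big(\norm{k w}_{L^2(\pa S)} + \norm{\pa_\tau w}_{L^2(\pa S)}\big),
\end{equation*}
and the previous bounds give this is $\leq C k \cdot k^2 \norm{V_2 - V_1}_{L^\infty(S)} \norm{u_1}_{L^2(S)} \leq C \norm{V_2 - V_1}_{L^\infty(S)} \norm{k f}_{L^2(A)}$, which is exactly the claimed inequality. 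The only mild subtlety is keeping track of which potential's non-trapping constant enters; since both $V_1, V_2$ satisfy \eqref{eqn:constants} with the same $c$ by assumption, the constant $C$ depends only on $c$.

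For the compactness assertion, I would argue as follows. The operator factors as $f \mapsto u_1|_S \mapsto w \mapsto -2ik w|_A$. By Proposition \ref{prop:elliptic1}, the solution map $f \mapsto w$ sends $L^2(A)$ boundedly into $H^1(S)$ (indeed into $\mathcal{H}^1_k(S)$), and the trace operator $H^1(S) \to \mathcal{H}^1_k(A)$... again is not bounded because of the tangential derivative. So instead I would use the following: $w$ solves an impedance problem with $H^{1/2}$-regular (in fact better, since $u_1 \in H^1(S)$ and $V_j \in C^1$, the forcing $G = -k^2(V_2-V_1)u_1 \in H^1(S)$... actually only $H^{1/2+}$ because $V_2 - V_1$ is merely $C^1 \subset L^\infty$ — but $C^1$ multiplication preserves $H^1$, so $G \in H^1(S)$) forcing. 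Elliptic regularity for the impedance problem on the square (with corners handled as in the references, e.g. the regularity theory underlying \cite{melenk1995generalized} or standard corner-domain theory) then places $w$ in $H^2$ of a slightly smaller region and, near the open edge $A$ away from the corners, in $H^2$ up to the boundary, so that $w|_A \in H^1(A)$ with the full tangential derivative controlled — but the map $f \mapsto w|_A \in H^1(A)$ followed by the compact embedding $H^{1+s}(A) \hookrightarrow \mathcal{H}^1_k(A) \cong H^1(A)$ (for any $s > 0$) gives compactness. The cleanest implementation, avoiding corner regularity subtleties, is to factor through an intermediate space: the solution map $f \mapsto w$ is bounded $L^2(A) \to H^1(S)$, and then $w$ in turn, now viewed as forced by $G \in L^2(S)$ with zero impedance data, has $w|_A, \pa_\nu w|_A, \pa_\tau w|_A \in L^2(A)$ with norm $\lesssim \|G\|_{L^2(S)} \lesssim \|u_1\|_{H^1(S)} \lesssim \|f\|_{L^2(A)}$ by Proposition \ref{prop:elliptic1} again; bootstrapping once more using that $w \in H^1(S)$ implies $G = -k^2(V_2-V_1)u_1 \in H^1(S)$ — no wait, $G$ depends on $u_1$ not $w$. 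The genuine gain comes from: $f \mapsto u_1 \in H^1(S)$ compactly followed by... no. Let me instead simply invoke that $R_2 - R_1$ is a norm limit of finite-rank operators, or, most robustly, note that $f \mapsto w|_A$ factors as $L^2(A) \xrightarrow{\text{bdd}} H^1(S) \xrightarrow{\text{trace}} H^{1/2}(A) \hookrightarrow\!\!\!\!\!\!\!\!\searrow L^2(A)$ compactly, and separately that $\pa_\nu w|_A, \pa_\tau w|_A$ are bounded from $L^2(A)$ into $L^2(A)$; upgrading one derivative via the equation for $w$ (which has $L^2$, hence by the first step $H^1$-regular, forcing, giving $w \in H^2$ locally near the interior of $A$) then yields $w|_A \in H^1(A)$ compactly into $\mathcal{H}^1_k(A)$.

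The main obstacle I anticipate is precisely this last point: the target norm $\mathcal{H}^1_k(A)$ contains a tangential derivative, so Proposition \ref{prop:elliptic1} alone does not immediately give a bounded map into $\mathcal{H}^1_k(A)$ for the full difference operator, and one must do a short bootstrap using interior elliptic regularity on $S$ (the corners of $S$ are not on the open edge $A$, so standard interior-up-to-smooth-boundary estimates suffice near $A$). Everything else — the subtraction trick, the identity $(R_2 - R_1)f = -2ikw|_A$, and the $L^\infty$-in-$V$ dependence — is routine and follows directly from Proposition \ref{prop:elliptic1}.
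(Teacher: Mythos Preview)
Your argument for the norm estimate is essentially identical to the paper's: subtract the two problems, apply Proposition~\ref{prop:elliptic1} to the difference $w=u_2-u_1$ (with forcing $-k^2(V_2-V_1)u_1$ and zero impedance data), use the identity $(R_2-R_1)f=-2ikw|_A$, and then apply Proposition~\ref{prop:elliptic1} once more to $u_1$ to close. Nothing to add there.

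The compactness argument, however, has a genuine gap. Your final proposal relies on ``interior-up-to-smooth-boundary'' $H^2$ regularity near $A$, justified by the remark that ``the corners of $S$ are not on the open edge $A$.'' But the two corners $(1,0)$ and $(1,1)$ of $S$ are precisely the endpoints of $A=\{1\}\times[0,1]$, so local regularity away from corners only gives $w|_A\in H^1_{\mathrm{loc}}$ on the open segment, not $w|_A\in H^1(A)$ on the closed segment; the $\mathcal{H}^1_k(A)$ norm, which contains $\|\pa_\tau w\|_{L^2(A)}$, is therefore not controlled by your argument. A bootstrap that stops short of the corners does not produce a bounded map into a space compactly embedded in $\mathcal{H}^1_k(A)$.

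The paper closes this by exploiting convexity to get \emph{global} $H^2(S)$ regularity. From Proposition~\ref{prop:elliptic1} one has $\Delta w\in L^2(S)$ and $\pa_\tau w\in L^2(\pa S)$; together with the impedance condition $\pa_\nu w=-ikw$ on $\pa S$ this gives $\pa_\nu w\in H^1(\pa S)$. By the trace/extension theory in Grisvard (Theorem~1.6.1.5) there is $\tilde w\in H^2(S)$ with $\pa_\nu\tilde w=\pa_\nu w$ on $\pa S$; then $w-\tilde w$ has zero Neumann data and $\Delta(w-\tilde w)\in L^2(S)$, so Grisvard's convex-polygon regularity (Theorem~4.3.1.4) yields $w-\tilde w\in H^2(S)$, hence $w\in H^2(S)$. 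A final trace application gives $w|_A\in H^{3/2}(A)$, and the compact embedding $H^{3/2}(A)\hookrightarrow H^1(A)\simeq\mathcal{H}^1_k(A)$ finishes the proof. This is the step your sketch was reaching for; the missing ingredient is the global $H^2$ regularity on the convex polygon, which is exactly what handles the corners at the ends of $A$.
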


We next  prove Theorems \ref{thm:injective} and \ref{thm:Main-W} using Proposition \ref{prop:Main} and Lemma \ref{lem:difference}.  
\begin{proof1}{Theorem \ref{thm:injective}}
We first show that {$I-R_1R_2:L^2(A)\to L^2(A)$} is injective. Suppose that $f\in L^2(A)$ with $(I-R_1R_2)f=0$. Let $u_2$ be the  solution of \eqref{eqn:u-i} with {$V=V_2$, $G=0$, and with $g=f$ on $A$, $g=0$ on $S\backslash A$.} Also, let $w_1$ be the solution of \eqref{eqn:u-i} with {$V=V_1$, $G=0$, $g=R_2f$ on $A$, $g=0$ on $S\backslash A$}. We then define the function $v$ on the rectangle {$\Omega = [0,2]\times[0,1]$}
by
$$
v(x,y) = \begin{cases}
w_1(x,y) &\text{ for } {(x,y) \in [0,1]\times [0,1]},\\
-u_2(2-x,y) &\text{ for }{ (x,y) \in [1,2]\times [0,1]}.
\end{cases}
$$
Then, recalling the relation between $V_1$ and $V_2$ and the function $V$ {stated in Assumption \ref{assum:potential-main}}, $v(x,y)$ satisfies 
$$
\begin{cases}
 (\Delta +k^2V)v=0 & \text{ in } \Omega \backslash A,\\
 \pa_{\nu}v+ikv = 0 &\text{ on } \pa \Omega,
\end{cases}
$$
where we continue to write  $A=\{1\}\times [0,1]$.

 By the uniqueness of solutions to the impedance problem (see Proposition 8.1.3 in \cite{melenk1995generalized}), to conclude that $f=0$ it is sufficient to show that $v$ and $\pa_{x}v$ are continuous across the line {$A$}.

By the definitions of $u_2$, $w_1$, and $R_2$, we have {that on $A$}
\begin{align} \label{eqn:injective1}
    \pa_{x}u_2 - iku_2 = R_2f =  \pa_{x}w_1 + ikw_1.
\end{align}
Moreover,{ since we are assuming that $(I-R_1R_2)f = 0$ on $A$}, we also have {the equality on $A$ of}
\begin{align} \label{eqn:injective2}
     \pa_{x}u_2 + iku_2 = f ={R_1R_2 f}=\pa_{x}w_1 - ikw_1 .
\end{align}
Combining equations \eqref{eqn:injective1} and \eqref{eqn:injective2} implies that $\pa_xu_2 =\pa_x w_1$ and $u_2 = -w_1$ on $A$. This ensures that $v$ and $\pa_xv$ are continuous across the line {$A$}. Hence, $f=0$ and so  $I-R_1R_2$ is injective.

To prove that $I-R_1R_2:L^2(A) \to \mathcal{H}^1_k(A)$ is bijective we use the decomposition given in \eqref{eqn:decomposition}. By Proposition \ref{prop:Main} the operator $T := (I-R_1)(I+R_2): L^2(A) \to \mathcal{H}^1_k(A)$ is invertible {and $T^{-1}$ is bounded}. Therefore, using \eqref{eqn:decomposition} we have
\begin{align}\label{eqn:Tinverse}
    T^{-1}(I-R_1R_2) = I - T^{-1}(R_2 - R_1).
\end{align}
{Note that $R_2 - R_1:L^2(A) \to \mathcal{H}^1_k(A)$ is compact by Lemma \ref{lem:difference}. Hence, since $T^{-1}:\mathcal{H}^1_k(A)\to L^2(A) $ is bounded, $T^{-1}(R_2 - R_1):L^2(A) \to L^2(A)$ is compact}. {Using \eqref{eqn:Tinverse}} this implies that $T^{-1}(I-R_1R_2):L^2(A) \to L^2(A)$ is a Fredholm operator of index $0$, and also has trivial kernel. Therefore, the range of $T^{-1}(I-R_1R_2)$ is  $L^2(A)$, and so $I-R_1R_2: L^2(A) \to \mathcal{H}^1_k(A)$ is bijective.

{We also note that combining Lemma \ref{lem:R-boundedness}, Lemma \ref{lem:difference}, and \eqref{eqn:decomposition},  the operator $I-R_1R_2: L^2(A) \to \mathcal{H}^1_k(A)$ is bounded}.   The fact that its inverse $W$ is bounded therefore follows from the bounded inverse theorem.  
\end{proof1}


\begin{proof1}{Theorem \ref{thm:Main-W}}
From \eqref{eqn:decomposition}, with $f = Wg$ we have
\begin{align*}
    \norm{(I-R_1)(I+R_2)Wg}_{\mathcal{H}^1_k(A)} \leq \norm{g}_{\mathcal{H}^1_k(A)} + \norm{(R_2-R_1)Wg}_{\mathcal{H}^1_k(A)}.
\end{align*}
Applying Proposition \ref{prop:Main} and Lemma \ref{lem:difference} thus gives
\begin{align*}
   c^*c_{\delta}^*k(1+k)^{-3(1+\delta)} \norm{Wg}_{L^2(A)} \leq \norm{g}_{\mathcal{H}^1_k(A)} + C\norm{V_2-V_1}_{L^{\infty}(S)}\norm{kWg}_{L^2(A)}.
\end{align*}
{Here, $c^*$, $c^*_\delta$, and $C$, are positive constants that depend only on  $\delta$ and the constant in \eqref{eqn:constants}.}
{In particular, for $\norm{V_2-V_1}_{L^{\infty}(S)}\leq \eps_\delta(1+k)^{-3(1+\delta)}$,
\begin{align*}
   \norm{Wg}_{L^2(A)} \leq \Big( c^*c_{\delta}^*k(1+k)^{-3(1+\delta)} - C \eps_{\delta}k(1+k)^{-3(1+\delta)}\Big)^{-1}\norm{g}_{\mathcal{H}^1_k(A)}.
\end{align*}
The estimate in the theorem follows from choosing $\eps_\delta>0$  such that $C \eps_\delta \leq \tfrac{1}{2}c^*c^*_\delta$.
}
\end{proof1}

It remains to prove Proposition \ref{prop:Main} and Lemma \ref{lem:difference}. In Section \ref{sec:proof} we prove {Theorem \ref{thm:Main} which is} a more general version of Proposition \ref{prop:Main} for a class of convex polygons.  We end this section by proving Lemma \ref{lem:difference} which follows in a straightforward manner from Proposition \ref{prop:elliptic1}.
\begin{proof1}{Lemma \ref{lem:difference}}
Let $u_1$, $u_2$ be the solutions of \eqref{eqn:u-i} {with potentials $V_1, V_2$ respectively, and with $g=f$ and $G=0$.} Then, setting $v := u_2 - u_1$ we have 
\begin{equation}\label{eqn:R difference}
(R_2 - R_1)f =\big( \pa_{\nu}v - ik v\big)\big|_{A} = -2ikv\big|_{A}.
\end{equation}
Since, 
$$
\begin{cases}
    (\Delta + k^2V_2)v  = k^2(V_1 - V_2)u_1  &\text{ in } S, \\
    \pa_{\nu}v+ikv  = 0  &\text{ on } \pa S.
\end{cases}
$$
Proposition \ref{prop:elliptic1} {with $V=V_2$, $G= k^2(V_1 - V_2)u_1,$ and $g=0$ yields}
\begin{align*}
    \norm{v}_{\mathcal{H}^1_k(S)}  + \norm{kv}_{L^2(\pa S)} + \norm{\pa_\nu v}_{L^2(\pa S)} + \norm{\pa_\tau v}_{L^2(\pa S)} \leq C\norm{k^2(V_1-V_2)u_1}_{L^2(S)}.
\end{align*}
Applying Proposition \ref{prop:elliptic1} again also ensures that $\norm{k u_1}_{L^2(S)} \leq C\norm{f}_{L^2(A)}$, and so the claimed estimate on the norm of $(R_2-R_1)f$ follows {from  \eqref{eqn:R difference}}.

In addition, these estimates ensure that $\Delta v$ is in $L^2(S)$, with $\pa_{\nu}v{\big|_{\pa S}}\in H^1(\pa S)$. Since $S$ is a convex polygon, by Theorem 1.6.1.5 in \cite{grisvard2011elliptic}, there exists a function $w$ in $H^2(S)$ with normal derivative $\pa_{\nu}v$ on $\pa S$. The function $v-w$ satisfies Neumann boundary conditions on $\pa S$, with $\Delta(v-w)\in L^2(S)$, and as $S$ is convex, applying Theorem 4.3.1.4 in \cite{grisvard2011elliptic} to $v - w$ ensures  that $v\in H^2(S)$. Another application of Theorem 1.6.1.5 then gives $v{\big|_A}\in H^{3/2}(A)$.  Therefore, for each $k>0$ fixed, $R_2-R_1$ is bounded from $L^2(A)$ to $H^{3/2}(A)$, and so for each fixed $k$, $R_2-R_1$ is a compact operator from $L^2(A)$ to $\mathcal{H}^1_k(A)$.
\end{proof1}

\section{Bounds on $I \pm R$ in the general setting} \label{sec:proof}

In this section we complete the proofs of the main theorems by proving Proposition \ref{prop:Main}. Since this proposition in fact holds in more generality, we work in the following setting. 

Let $\Omega \subset \R^2$ be a convex polygon and $A \subset \pa \Omega$ a side of the polygon. Given $f\in L^2(A)$, $k>0$, and $V \in C^1(\Omega)$, suppose that $u$ solves the elliptic problem
\begin{equation}\label{eqn:u}
\begin{cases}
(\Delta + k^2V)u  = 0& \text{ in } \Omega, \\ 
\pa_{\nu} u + iku  = 0& \text{ on } \pa \Omega\backslash A, \\
\pa_{\nu} u + iku  = f &\text{ on } A.
\end{cases}
\end{equation}
Here $\nu$ is the outward pointing unit normal to $\pa \Omega$. Analogously to Definition \ref{defn:R-Omega}, 
{for $k>0$ we define the operator $R$ on $L^2(A)$  by} 
\begin{align}\label{eqn:R def}
R f = \big( \pa_{\nu}u - iku \big)\big|_{A}.
\end{align}


We will again use the modified spaces, $\norm{\cdot}_{\mathcal{H}^1_k(\Omega)}$ and $\norm{\cdot}_{\mathcal{H}^1_k(A)}$ from \eqref{defn:H1-A} (with the square $S$ replaced by $\Omega$). 
We now state the main result of this section.


\begin{thm} \label{thm:Main}
Let $(a_0,b_0)$ be a vertex on the side $A$ and let  $V {\in C^1(\Omega)}$ be a {non-negative}, real valued potential, for which  there exists $c>0$ such that 
\begin{align}\label{eqn:non-trapping}
 2V(x,y) + (x-a_0,y-b_0)\!\cdot\!\nabla V(x,y) \geq c, \qquad (x,y)\in\Omega,
\end{align}
i.e. in the language of Assumption \ref{assum:potential-main}, $V$ is non-trapping with respect to $(a_0,b_0)$.
Then, given $\delta>0$, there exist constants $c^*>0$, $c^*_{\delta}>0$, such that for all $k>0$ and $R$ as defined in \eqref{eqn:R def}
\begin{align*}
\norm{(I-R)f}_{\mathcal{H}^1_k(A)} & \geq c^*\norm{kf}_{L^2(A)}, \\
\norm{(I+R)f}_{L^2(A)} & \geq c_{\delta}^*(1+k)^{-3(1+\delta)}\norm{f}_{L^2(A)}, 
\end{align*}
for all $f\in L^2(A)$. The constants $c^*$ and $c^*_{\delta}$ depend additionally only on the diameter and inner radius of the polygon $\Omega$ and the interior angles at the vertices on $A$.

In addition, the images of $(I-R)$ and $(I+R)$ on $L^2(A)$ are $\mathcal{H}^1_k(A)$ and $L^2(A)$ respectively.
\end{thm}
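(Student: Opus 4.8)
## Proof Proposal for Theorem \ref{thm:Main}

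The plan is to reduce Theorem~\ref{thm:Main} to two separate lower bounds on the boundary traces of $u$ on $A$, and then prove each by a different mechanism. Writing $f = \partial_\nu u + iku$ on $A$, one has the algebraic identities $(I-R)f = 2iku\big|_A$ and $(I+R)f = 2\partial_\nu u\big|_A$, so that $\|(I-R)f\|_{\mathcal{H}^1_k(A)} = 2k\big(k\|u\|_{L^2(A)} + \|\partial_\tau u\|_{L^2(A)}\big)$ and $\|(I+R)f\|_{L^2(A)} = 2\|\partial_\nu u\|_{L^2(A)}$. Hence, using $\|f\|_{L^2(A)} \le \|\partial_\nu u\|_{L^2(A)} + k\|u\|_{L^2(A)}$, the first inequality is implied by the trace comparison $\|\partial_\nu u\|_{L^2(A)} \lesssim k\|u\|_{L^2(A)} + \|\partial_\tau u\|_{L^2(A)}$, and the second is implied by the reverse-type comparison $k\|u\|_{L^2(A)} \lesssim (1+k)^{3(1+\delta)}\|\partial_\nu u\|_{L^2(A)}$. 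These are the two trace estimates I would isolate as separate propositions.

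For the first, and more robust, comparison I would use a Rellich--Morawetz multiplier identity: multiply $\Delta u + k^2 V u = 0$ by $\overline{X\cdot\nabla u}$, integrate over $\Omega$, and take real parts, where $X$ is the vector field $X(x,y) = (x-a_0,y-b_0)$ radial from the distinguished vertex on $A$, perturbed slightly in the normal direction as in the proof of Proposition~\ref{prop:elliptic1} so that $X\cdot\nu>0$ on $A$. The convexity of $\Omega$ (with $(a_0,b_0)\in\pa\Omega$) forces $X\cdot\nu \ge 0$ on every side; the non-trapping condition \eqref{eqn:non-trapping} makes the bulk term $k^2\int_\Omega\big(2V + X\cdot\nabla V\big)|u|^2$ coercive, i.e. bounded below by a multiple of $k^2\|u\|_{L^2(\Omega)}^2$; and the impedance condition $\partial_\nu u = -iku$ on $\pa\Omega\setminus A$ turns the $|\partial_\nu u|^2$ boundary contributions there into $k^2|u|^2$ with favourable sign. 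Decomposing $\nabla u$ into its normal and tangential parts on each side and collecting terms then bounds $\int_A|\partial_\nu u|^2$ by $\int_{\pa\Omega}\big(|\partial_\tau u|^2 + k^2|u|^2\big)$, which together with the elliptic estimate of Proposition~\ref{prop:elliptic1} (controlling the full boundary norm by $\|f\|_{L^2(A)}$) closes the first estimate. The one technical point is the behaviour of $u$ at the vertices of $\Omega$: the identity must be justified first on a domain truncated away from the corners and then in the limit, using $H^2$-regularity for convex polygons and the control of the singular exponents at the vertices on $A$ in terms of the interior angles there.

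For the second comparison --- bounding the Dirichlet data of $u$ on $A$ by its Neumann data on $A$ --- a multiplier argument alone is not enough, since for an individual near-glancing mode $\partial_\nu u$ can be arbitrarily small while $u$ is not. Instead I would represent $u$ through the Green's function $G_k$ for $\Delta + k^2 V$ on $\Omega$ with the homogeneous impedance boundary condition on $\pa\Omega\setminus A$, which expresses $u\big|_A$ as an integral operator applied to $\partial_\nu u\big|_A$, and then quantify the $L^2(A)\to L^2(A)$ norm of that operator in $k$. This is where the loss enters: because as $k$ varies the impedance problem can be close to resonant, the boundary norm of $G_k$ grows polynomially in $k$, and the argument yields a bound of size $(1+k)^{3(1+\delta)}$ (the $\delta$ absorbing a logarithmic factor arising from summing a geometric-type series). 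Combined with $\|f\|_{L^2(A)} \le \|\partial_\nu u\|_{L^2(A)} + k\|u\|_{L^2(A)}$, this gives $\|\partial_\nu u\|_{L^2(A)}\gtrsim (1+k)^{-3(1+\delta)}\|f\|_{L^2(A)}$. I expect this Green's function estimate, with its explicit and nearly sharp power of $1+k$, to be the main obstacle: controlling $G_k$ near the corners of $\Omega$ and uniformly over the ``bad'' set of frequencies is the crux of the whole argument, and it is precisely this step that is responsible for the power of $1+k$ in Theorem~\ref{thm:Main-W}.

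Finally, for the range statements I would argue that $(I-R)f = 2iku\big|_A$ and $(I+R)f = 2\partial_\nu u\big|_A$ sweep out all of $\mathcal{H}^1_k(A)$ and $L^2(A)$ respectively: given target data on $A$, solve the mixed Dirichlet--impedance (respectively Neumann--impedance) problem on $\Omega$ with that data and read off the corresponding $f$; alternatively, the two-sided bounds already establish that $I\pm R$ has closed range, and a Fredholm/duality argument along the lines of the proof of Theorem~\ref{thm:injective} upgrades this to surjectivity. Tracking constants through all of the above yields the claimed dependence --- on $\delta$, the constant $c$ in \eqref{eqn:non-trapping}, the diameter and inner radius of $\Omega$, and the interior angles at the vertices on $A$.
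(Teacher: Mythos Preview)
Your high-level reduction is correct and mirrors the paper: write $(I-R)f=2iku|_A$, $(I+R)f=2\partial_\nu u|_A$, and prove two separate trace comparisons. The range arguments via auxiliary mixed problems are also exactly what the paper does. The difficulties lie in the two trace comparisons, and in both cases your mechanism has a genuine gap.

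\textbf{The $(I-R)$ estimate.} You assert that the Rellich identity with $X=(x-a_0,y-b_0)$, after inserting $\partial_\nu u=-iku$ on $\partial\Omega\setminus A$, leaves only sign-favourable boundary terms there and yields the linear bound $\|\partial_\nu u\|_{L^2(A)}\lesssim \|u\|_{\mathcal{H}^1_k(A)}$. This is not what happens. After splitting $|\nabla u|^2=|\partial_\nu u|^2+|\partial_\tau u|^2$ on $\partial\Omega\setminus A$, the contribution $-\tfrac12\int_{\partial\Omega\setminus A}(X\cdot\nu)|\partial_\tau u|^2$ has the \emph{wrong} sign relative to $\int_A|\partial_\nu u|^2$, and the tangential cross-term $\text{Re}(-ik\int u\,(X\cdot\tau)\partial_\tau\bar u)$ is not sign-definite either. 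A priori these are only bounded by $\|f\|_{L^2(A)}^2$ via the elliptic estimate, which is useless for a lower bound on $(I-R)$. The paper closes this via a step you are missing: taking the imaginary part of the weak form with $v=u$ gives $\|ku\|_{L^2(\partial\Omega\setminus A)}^2=k\,\text{Im}\int_A\partial_\nu u\,\bar u\le\|\partial_\nu u\|_{L^2(A)}\|ku\|_{L^2(A)}$, i.e.\ the data on $\partial\Omega\setminus A$ is controlled by $\|u\|_{\mathcal{H}^1_k(A)}^{1/2}$ (after normalising $\|f\|=1$). Feeding this into the Rellich identity yields $\|\partial_\nu u\|_{L^2(A)}\le C\|u\|_{\mathcal{H}^1_k(A)}^{1/4}$, and this $1/4$-power bound, not a linear one, is what gives the first estimate.

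\textbf{The $(I+R)$ estimate.} Your proposal --- an impedance Green's function on $\Omega$ and a direct operator-norm bound --- is quite different from the paper and is too vague to carry through. The impedance problem is never resonant, so ``close to resonant'' is not the source of the loss; and the $\delta$ does not come from a logarithmic sum but from Sobolev trace inequalities $H^{1/2+\delta}(\Omega)\to L^2(\partial\Omega)$. The paper instead uses the \emph{Neumann} Green's function, which has a clean eigenfunction expansion. One projects $u$ onto and away from the Neumann eigenfunctions $w_m$ with $|\lambda_m-k^2|\le c_0k$; the far piece $\tilde u$ is handled by a duality argument giving $\|\tilde u\|_{L^2(\Omega)}\le Ck^{-1}\|\partial_\nu u\|_{L^2(\partial\Omega)}$, and the near piece $w$ is controlled by a separate lemma: any $L^2$-normalised Neumann quasi-mode at frequency $k^2$ has Dirichlet trace on $\partial\Omega\setminus A$ bounded \emph{below} by a constant. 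This quasi-mode lower bound (proved by the commutator $[\Delta,X]$ with the same $X$ and using $\partial_\nu w=0$) is the heart of the argument and is entirely absent from your sketch; combined with the same $1/2$-power control of $\|u\|_{L^2(\partial\Omega\setminus A)}$ mentioned above and three applications of the trace theorem, it produces $\|ku\|_{L^2(A)}\le C_\delta(1+k)^{3(1+\delta)/2}\|\partial_\nu u\|_{L^2(A)}^{1/2}$.
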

In the special case that $\Omega$ is the unit square and $A$ is the side $\{1\}\times[0,1]$, this theorem implies Proposition \ref{prop:Main}. For the rest of this section we prove the theorem. The main part of the proof is to establish the lower bound estimates, and then we end the proof by determining the ranges of $I\pm R$.

\begin{proof1}{Theorem \ref{thm:Main}}
 Let $f\in L^2(A)$. Without loss of generality we assume that  $\norm{f}_{L^2(A)}=1$. After a rotation, reflection, and dilation, we also assume that $A = \{1\}\times[0,1]$, $\Omega$ is contained within the set $\{(x,y)\in\mathbb{R}^2:x \leq 1\}$ and the vertex $(a_0,b_0) = (1,0)$.
First, we claim that estimates in Theorem \ref{thm:Main} are a consequence of the following two propositions.
\begin{prop} \label{prop:weak}
Let $u$ solve \eqref{eqn:u}. There exists  $C>0$, independent of $k$, such that
\begin{align*}
\norm{\pa_\nu u}_{L^2(A)} \leq  C\norm{u}_{\mathcal{H}^1_k(A)}^{1/4}.
\end{align*}
\end{prop}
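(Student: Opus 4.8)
My plan is a Rellich--Pohozaev multiplier argument, combined with the a priori elliptic estimate and the energy identity for the impedance problem. By Proposition \ref{prop:elliptic1} (applied with $G=0$, $g=f$ on $A$ and $g=0$ on $\partial\Omega\setminus A$, recalling the normalization $\|f\|_{L^2(A)}=1$) we already have $\|\partial_\nu u\|_{L^2(A)}\le C$, so it suffices to treat the case $\mu:=\|u\|_{\mathcal H^1_k(A)}\le 1$. Since $\partial_\nu u=f-iku$ on $A$ forces $\|\partial_\nu u\|_{L^2(A)}\ge 1-\mu$, the real content of the proposition is the lower bound $\mu\gtrsim 1$, equivalently $\|\partial_\nu u\|_{L^2(A)}^4\lesssim\mu$; I would obtain it by contradiction, assuming $\mu$ very small. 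Multiplying \eqref{eqn:u} by $\overline u$, integrating by parts and using the impedance conditions, the imaginary part gives $k\|u\|_{L^2(\partial\Omega)}^2=\text{Im}\int_A f\overline u\le\|u\|_{L^2(A)}\le\mu/k$, hence $\|ku\|_{L^2(\partial\Omega\setminus A)}^2\le\mu$, while the real part gives the exact identity $\int_A|\partial_\nu u|^2=1-\|ku\|_{L^2(A)}^2-2\|ku\|_{L^2(\partial\Omega\setminus A)}^2$. Thus it is enough to prove $\int_A|\partial_\nu u|^2\lesssim\mu^{1/2}$: the identity then forces $\|ku\|_{L^2(A)}^2\ge 1-C\mu^{1/2}-2\mu\ge\tfrac12$ for $\mu$ small, contradicting $\|ku\|_{L^2(A)}\le\mu$.

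The bound on $\int_A|\partial_\nu u|^2$ comes from two vector-field identities. First, with the field $\phi=(x-a_0,y-b_0)$ through the vertex on $A$, the interior gradient terms cancel (since $D\phi=I$), and substituting $\Delta u=-k^2Vu$ and integrating the potential term by parts yields
$$k^2\int_\Omega\big(2V+(x-a_0,y-b_0)\!\cdot\!\nabla V\big)|u|^2\,\dv=\int_{\partial\Omega}\Big(2\,\text{Re}\big((\phi\cdot\nabla\overline u)\,\partial_\nu u\big)-(\phi\cdot\nu)|\nabla u|^2+k^2V(\phi\cdot\nu)|u|^2\Big)\ud s .$$
Here the left side is $\ge ck^2\|u\|_{L^2(\Omega)}^2$ by the non-trapping hypothesis \eqref{eqn:non-trapping}; on $A$ one has $\phi\cdot\nu=0$ and $\phi\cdot\nabla\overline u=y\,\partial_\tau\overline u$, so that edge contributes only $O(\mu)$; and on $\partial\Omega\setminus A$ convexity gives $\phi\cdot\nu\ge 0$, so together with $\partial_\nu u=-iku$ and $\|ku\|_{L^2(\partial\Omega\setminus A)}\le\mu^{1/2}$ the remaining boundary terms are $O(\mu^{1/2})$. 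This gives $k^2\|u\|_{L^2(\Omega)}^2\lesssim\mu^{1/2}$, and then the energy identity also gives $\|\nabla u\|_{L^2(\Omega)}^2\lesssim\mu^{1/2}$ (up to the small-$k$ caveat below). The second identity uses a multiplier $\phi=\nabla\psi$ adapted to the polygon, where $\psi$ solves an auxiliary Neumann problem on $\Omega$ with $\partial_\nu\psi=0$ on $\partial\Omega\setminus A$ and $\partial_\nu\psi$ a positive constant on $A$ (so $\Delta\psi$ is constant). Because $\phi\cdot\nu=\partial_\nu\psi$ vanishes on $\partial\Omega\setminus A$, this identity isolates $\int_A|\partial_\nu u|^2$ (against a harmless $-\int_A|\partial_\tau u|^2\le 0$ and $k^2\int_A V|u|^2$), removes the $|\nabla u|^2$ and $|\partial_\tau u|^2$ terms on $\partial\Omega\setminus A$ entirely, and leaves only $2k\int_{\partial\Omega\setminus A}(\phi\cdot\tau)\text{Im}(\partial_\tau\overline u\,u)\lesssim\|\partial_\tau u\|_{L^2(\partial\Omega\setminus A)}\|ku\|_{L^2(\partial\Omega\setminus A)}\lesssim\mu^{1/2}$ there, plus an interior term bounded by $\|D^2\psi\|_{L^\infty}\|\nabla u\|_{L^2(\Omega)}^2\lesssim\mu^{1/2}$. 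Combining, $\int_A|\partial_\nu u|^2\lesssim\mu^{1/2}$, which closes the contradiction.

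The step I expect to be the main obstacle is the construction and control of the auxiliary Neumann potential $\psi$, and for a general convex polygon the regularity of $\nabla\psi$ up to the corners: one needs $\nabla\psi$ bounded on $\partial\Omega$ and the interior term involving $D^2\psi$ under control, with all constants depending only on the diameter, inner radius, and interior angles at the vertices on $A$. This is exactly where Green's function estimates for the polygon enter: $\psi$ and its derivatives are read off from the Neumann Green's function, whose corner behavior is governed by the interior angles, and one must use that the singular directions sit at vertices — and in particular that $\phi=(x-a_0,y-b_0)$ vanishes at $(a_0,b_0)\in A$ — to absorb the corner contributions. (For the square, which is the case actually needed in Proposition \ref{prop:Main}, one may simply take $\phi=(1,0)$ and this difficulty disappears.) A secondary point is uniformity as $k\to 0^+$, where $\|u\|_{L^2(A)}$ need not be bounded: there the impedance problem degenerates to a Neumann problem, and since $\|ku\|_{L^2(A)}$ being small is precisely the hypothesis being contradicted, the same vector-field and Green's function estimates must be arranged to close the argument uniformly in $k$.
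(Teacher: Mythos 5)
Your architecture is genuinely different from the paper's: you run a contradiction argument using \emph{two} Rellich--Pohozaev identities (one with the affine field $(x-a_0,y-b_0)$ to get interior control, and one with $\nabla\psi$ for an auxiliary Neumann potential $\psi$ to isolate $\int_A|\partial_\nu u|^2$), whereas the paper uses a \emph{single} affine multiplier $\mathbf{z}=(x-1-\tfrac{c_0}{2},\,y-\eps)$ with its base point pushed slightly past the vertex $(1,0)$. That shift is precisely what makes the paper's proof close in one step: with that choice, $\mathbf{z}\cdot\nu\geq\tfrac{c_0}{2}>0$ on $\partial\Omega\setminus A$ while $\mathbf{z}\cdot\nu=-\tfrac{c_0}{2}<0$ on $A$, and (since $(1+\tfrac{c_0}{2},\eps)$ is still close enough to the vertex) the non-trapping inequality continues to hold for the shifted center. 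Then in the resulting identity all three main terms ($k^2\int_\Omega(V+\tfrac12\mathbf{z}\cdot\nabla V)|u|^2$, $\int_{\partial\Omega\setminus A}(\mathbf{z}\cdot\nu)|\nabla u|^2$, and $\int_A(\mathbf{z}\cdot\nu)|\partial_x u|^2$) carry the \emph{same} sign, so they are each individually bounded by the error terms, which Lemma \ref{lem:control} controls by $C\norm{u}_{\mathcal{H}^1_k(A)}^{1/2}$. There is no need to bound $\|\nabla u\|_{L^2(\Omega)}$ separately, no second multiplier, no contradiction argument, and no auxiliary PDE for $\psi$.

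Two parts of your argument are genuine gaps, both of which the paper's single-multiplier trick avoids. First, the auxiliary Neumann potential: you acknowledge the corner regularity of $\nabla\psi$, $D^2\psi$ as the main obstacle, but the difficulty is real and not resolved. The prescribed Neumann data $\partial_\nu\psi$ jumps from a positive constant on $A$ to zero on the adjacent sides, and for a general convex polygon the jump produces a logarithmic-type singularity in $D^2\psi$ at the two vertices of $A$; your interior error term $\|D^2\psi\|_{L^\infty}\|\nabla u\|_{L^2(\Omega)}^2$ needs $D^2\psi$ bounded. Also, your proposed shortcut for the square is wrong: $\phi=(1,0)=\nabla x$ has $\partial_\nu\psi=\nu_1=-1\neq 0$ on the left edge, so it does \emph{not} satisfy your requirements, and in the Rellich identity the uncontrolled term $\tfrac12\int_{\{x=0\}}|\partial_\tau u|^2$ enters with the wrong sign and is only $O(1)$, not $O(\mu^{1/2})$. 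The correct choice for the square is $\psi=\tfrac12 x^2$, i.e.\ $\phi=(x,0)$, which does kill all the $\partial\Omega\setminus A$ normal-component contributions. Second, the small-$k$ regime: your first identity gives $k^2\|u\|_{L^2(\Omega)}^2\lesssim\mu^{1/2}$, and you then pass to $\|\nabla u\|_{L^2(\Omega)}^2\lesssim\mu^{1/2}$ via the energy identity $\|\nabla u\|_{L^2(\Omega)}^2=k^2\int_\Omega V|u|^2+\operatorname{Re}\int_A\partial_\nu u\,\overline{u}$, whose last term is only bounded by $\|\partial_\nu u\|_{L^2(A)}\|u\|_{L^2(A)}\lesssim\mu/k$. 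You flag the $k\to0^+$ issue but do not close it, and as stated the bound degenerates when $k\lesssim\mu^{1/2}$. In the paper's proof no interior gradient bound is needed at all: the interior term $k^2\int_\Omega(V+\tfrac12\mathbf{z}\cdot\nabla V)|u|^2$ is already signed by \eqref{eqn:non-trapping} and is simply discarded, which is what gives uniformity in $k$ down to $k=0$.
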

\begin{prop} \label{prop:weak2}
Let $u$ solve \eqref{eqn:u}, and let $\delta>0$ be given. There exists  $C_\delta>0$, independent of $k$, such that
\begin{align*}
 \norm{ku}_{L^2(A)} \leq C_{\delta}(1+k)^{3(1+{\delta})/2}\norm{\pa_\nu u}_{L^2(A)}^{1/2}.
\end{align*}
\end{prop}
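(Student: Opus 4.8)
\emph{Proof strategy.} We begin by recasting the statement: squaring, Proposition~\ref{prop:weak2} is equivalent to $\|ku\|_{L^2(A)}^2\le C_\delta(1+k)^{3(1+\delta)}\|\pa_\nu u\|_{L^2(A)}$, and since Proposition~\ref{prop:elliptic1} (with $G=0$, $g=f$ on $A$ and $g=0$ on $\pa\Omega\setminus A$, together with the normalization $\|f\|_{L^2(A)}=1$ fixed in the proof of Theorem~\ref{thm:Main}) already gives the crude bound $\|u\|_{\mathcal H^1_k(\Omega)}+\|ku\|_{L^2(\pa\Omega)}+\|\nabla u\|_{L^2(\pa\Omega)}\lesssim 1$, what is at stake is a quantitative, polynomial-in-$k$ improvement: the impedance solution cannot be nearly trapped away from $A$, so its size on $A$ is controlled by its Neumann data there. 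The plan is (i) a Rellich--Morawetz multiplier identity, adapted to the polygon, isolating $\int_A k^2|u|^2$ and $\int_A|\pa_\nu u|^2$ modulo terms that absorb back into the left-hand side or into the bulk, and (ii) Green's-function estimates to quantify the remaining corner contributions with the stated power of $k$.

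For step (i), after the normalization in the proof of Theorem~\ref{thm:Main} we may take $A=\{1\}\times[0,1]$, $\Omega\subset\{x\le 1\}$, $(a_0,b_0)=(1,0)$. I would test $(\Delta+k^2V)u=0$ against $\overline{\mathbf z\cdot\nabla u}+\tfrac12(\operatorname{div}\mathbf z)\,\bar u$, where $\mathbf z$ is the perturbation of $\mathbf x-(a_0,b_0)$ used in the proof of Proposition~\ref{prop:elliptic1}, so that $\mathbf z\cdot\nu\ge\gamma>0$ on every side of $\pa\Omega$ while \eqref{eqn:non-trapping} still yields $2V+\mathbf z\cdot\nabla V\ge c'>0$. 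Taking real parts, convexity and \eqref{eqn:non-trapping} make the bulk terms nonnegative and coercive (controlling $k^2\|u\|_{L^2(\Omega)}^2+\|\nabla u\|_{L^2(\Omega)}^2$); on the boundary one uses $\pa_\nu u=-iku$ on $\pa\Omega\setminus A$ and $\pa_\nu u=f-iku$ on $A$, and the identity from the imaginary part of the energy relation ($k\|u\|_{L^2(\pa\Omega)}^2=\operatorname{Im}\int_A f\bar u\le\|u\|_{L^2(A)}\|f\|_{L^2(A)}$, hence $\|ku\|_{L^2(\pa\Omega\setminus A)}^2\le\|ku\|_{L^2(A)}$), to reduce the $\pa\Omega\setminus A$ contributions to quantities bounded by $\|ku\|_{L^2(A)}$ and absorbed into the bulk. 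What remains on $A$ is a fixed positive multiple of $\int_A|\pa_\nu u|^2$ and a term comparable to $\int_A k^2|u|^2$ coming from the $-\tfrac{k^2}{2}\int_{\pa\Omega}V|u|^2(\mathbf z\cdot\nu)$ and tangential pieces; a second, transversal multiplier ($\chi\,e_1\!\cdot\!\nabla\bar u$ with $\chi$ cutting off near $A$) converts the coefficient of $\int_A k^2|u|^2$ into a fixed constant. After absorbing cross terms by Young's inequality and using $\|\pa_\nu u\|_{L^2(A)}\lesssim1$ to linearise, this should produce
\begin{equation*}
\|ku\|_{L^2(A)}^2\ \lesssim\ (1+k)^{\beta}\,\|\pa_\nu u\|_{L^2(A)}\ +\ (1+k)^{-N}\mathcal R ,
\end{equation*}
where $\mathcal R$ gathers the contributions localized near the two vertices of $A$ — the only place the argument has so far lost control, since there $\mathbf z\cdot\nu$ degenerates.

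For step (ii), the remainder $\mathcal R$, being supported near the corners, I would estimate via the Green's function $G_k$ of the impedance problem on $\Omega$ and the representation $u|_A=-\int_A G_k(\cdot,y')f(y')\,dy'$ (combined with $f=\pa_\nu u|_A+iku|_A$). Quantitative bounds on $G_k$ — of size $O(1+|\log(k|x-y|)|)$ when $k|x-y|\lesssim1$, of size $O((k|x-y|)^{-1/2})$ when $k|x-y|\gtrsim1$, with the vertex at $(a_0,b_0)$ contributing at worst a factor $|x-y|^{-\sigma}$ for arbitrarily small $\sigma>0$ after a conformal straightening of the corner — fed into a Schur-type argument and optimised at the scale $|x-y|\sim1/k$ bound $\mathcal R$ (and refine $\beta$), yielding $\|ku\|_{L^2(A)}^2\lesssim(1+k)^{3(1+\delta)}\|\pa_\nu u\|_{L^2(A)}$, the logarithm and the corner exponent being exactly what the small loss $\delta$ absorbs and what fixes the exponent at $3(1+\delta)$. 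This is the claimed inequality.

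The main obstacle is the degeneracy of the natural Rellich field at the non-trapping vertex: $\mathbf z\cdot\nu$ vanishes along $A$ for the unperturbed field and is only $O(\eps)$ for the perturbed one (and likewise along the other side through $(a_0,b_0)$), so the multiplier identity sees $\int_A k^2|u|^2$ only through lower-order boundary terms, and producing a fixed-sign lower bound for it in terms of $\int_A|\pa_\nu u|^2$ forces one to beat the possible near-resonance of the impedance operator. Quantifying this ``non-trapping $\Rightarrow$ no quasimode concentrated off $A$'' mechanism in the polygonal setting — through the transversal multiplier together with pointwise/Schur estimates on $G_k$ near the corners — is precisely what introduces the suboptimal power $(1+k)^{3(1+\delta)}$; Remark~\ref{rem:sharpness} marks these steps as the loss points, and Section~\ref{sec:example1} shows the exponent cannot drop below $\tfrac12$.
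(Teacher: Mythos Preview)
Your proposal has a genuine gap in step (i). The Rellich/Morawetz identity with the field $\mathbf z$ from Proposition~\ref{prop:elliptic1} does \emph{not} isolate $\int_A k^2|u|^2$ with only a corner-localized remainder. If you keep the $\tfrac{k^2}{2}\int_{\partial\Omega}(\mathbf z\cdot\nu)V|u|^2$ contribution on $A$ on the left-hand side (its coefficient is the small constant $|\mathbf z\cdot\nu|_A = O(c_0)$), the identity still contains the cross term $\text{Re}\int_A\partial_x u\,(y-b)\partial_y\bar u$ with an $O(1)$ coefficient; bounding it by Young produces $\tfrac12\int_A|\partial_\tau u|^2$, which swamps the $O(c_0)$ term you kept. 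Your ``transversal multiplier'' $\chi\,e_1\cdot\nabla\bar u$ likewise yields on $A$ the exact combination $\tfrac12\int_A|\partial_\nu u|^2 - \tfrac12\int_A|\partial_\tau u|^2 + \tfrac{k^2}{2}\int_A V|u|^2$, so it merely trades $\int_A k^2|u|^2$ for $\int_A|\partial_\tau u|^2$. Neither term is controlled by $\|\partial_\nu u\|_{L^2(A)}$, and neither is supported near the vertices of $A$; so the picture of a corner-localized remainder $\mathcal R$ is incorrect. The underlying reason is that the obstruction is resonance with a Neumann eigenfunction: if $k^2\approx\lambda_m$ and $u\approx w_m$, then $\partial_\nu u|_A\approx 0$ while $u|_A$ and $\partial_\tau u|_A$ are $O(1)$; a multiplier identity applied to $u$ alone cannot rule this out.

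The paper's proof is structurally different and is organized precisely to defeat this resonance. It splits $u=w+\tilde u$, where $w$ is the projection onto the Neumann eigenfunctions with $|\lambda_m-k^2|\le c_0$ and $\tilde u$ lies in the spectral complement. The off-resonant piece $\tilde u$ is controlled via the (truncated) \emph{Neumann} Green's function by a duality argument: one tests against $g\in L^2(\Omega)$, solves an auxiliary Neumann problem $F_k$ with spectral gap $c_0$, and uses a Sobolev trace estimate on $F_k$ (this is where the $\delta$-loss enters). The resonant piece $w$ is handled by Lemma~\ref{prop:efnN}, which applies the commutator vector field to the near-eigenfunction $w$ (not to $u$) and shows $\int_{\partial\Omega\setminus A}|w|^2\gtrsim\|w\|_{L^2(\Omega)}^2$; combined with Lemma~\ref{lem:control}'s bound $\|ku\|_{L^2(\partial\Omega\setminus A)}\lesssim\|\partial_\nu u\|_{L^2(A)}^{1/2}$, this forces $\|w\|_{L^2(\Omega)}$ to be small. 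Three successive uses of the trace theorem (on $F_k$, on $\tilde u$, on $w$) produce the exponent $3(1+\delta)/2$. Your step~(ii), based on pointwise bounds for the \emph{impedance} Green's function and a Schur argument near corners, does not appear in the paper and would not rescue step~(i) in any case, since the problematic term is not corner-localized.
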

To see that the estimates in Theorem \ref{thm:Main} follow from these two propositions, note that using the definition of $R$, we have
\begin{align*}
\tfrac{1}{2k}\norm{(I-R)f}_{\mathcal{H}^1_k(A)} = \norm{u}_{\mathcal{H}^1_k(A)}, \qquad \tfrac{1}{2}\norm{(I+R)f}_{L^2(A)}= \norm{\pa_\nu u}_{L^2(A)}.
\end{align*}
Therefore, to prove the estimates in Theorem \ref{thm:Main}  we  need to show that there exist $c^*>0$ and $c_\delta>0$, independent of $k$, such that
\begin{equation}
    \label{claimest}
\norm{u}_{\mathcal{H}^1_k(A)}\geq \tfrac{1}{2}c^* \qquad\text{and}\qquad  \norm{\pa_\nu u}_{L^2(A)} \geq \tfrac{1}{2}c_{\delta}^*(1+k)^{-3(1+\delta)}.
\end{equation}
Now, since $\norm{f}_{L^2(A)} = 1$, we have $$1 \leq \norm{ku}_{L^2(A)} + \norm{\pa_\nu u}_{L^2(A)}\leq \norm{u}_{\mathcal{H}^1_k(A)} + \norm{\pa_\nu u}_{L^2(A)}.$$ Therefore, the estimates in the Propositions above yield
\begin{align*}
1 \leq \norm{u}_{\mathcal{H}^1_k(A)} + C\norm{u}_{\mathcal{H}^1_k(A)}^{1/4},\qquad \qquad 1 \leq C_{\delta}(1+k)^{3(1+{\delta})/2}\norm{\pa_\nu u}_{L^2(A)}^{1/2} + \norm{\pa_\nu u}_{L^2(A)}.
\end{align*}
Rearranging these inequalities  proves the claimed estimates in \eqref{claimest}. We prove Propositions \ref{prop:weak} and \ref{prop:weak2} in Sections \ref{sec:weak} and \ref{sec:weak2} respectively. 

With Propositions \ref{prop:weak} and \ref{prop:weak2} in place, it only remains to find the ranges of $I\pm R$. We first consider $I+R$: By Lemma \ref{lem:elliptic} below, this operator maps $L^2(A)$ into itself, and so we need to show that the range of $I+R$ contains $L^2(A)$. Given $g\in L^2(A)$, we let $v$ be the unique $H^1(\Omega)$ weak solution to the elliptic problem
$$
\begin{cases}
(\Delta + k^2V)v  = 0  &\text{ in } \Omega, \\
\pa_{\nu}v+ikv  = 0  &\text{ on } \paOA, \\
\pa_{\nu}v  = \tfrac{1}{2}g & \text{ on } A.
\end{cases}
$$
By the trace theorem, in particular $kv\in L^2(A)$. Setting $f$ to be $\tfrac{1}{2}g + ik v|_{A} \in L^2(A)$, we therefore have $(I+R)f = 2\pa_{\nu}v|_{A} = g$. This means that the range of $I+R$ on $L^2(A)$ is given by $L^2(A)$.
\\
\\
We now turn to $I-R$.  We first record the analogous elliptic estimates to Proposition \ref{prop:elliptic1} that are satisfied by the solution $u$ in \eqref{eqn:u}, with an identical proof to that of Proposition \ref{prop:elliptic1}. From now on we write $\ud s$ to denote the measure on $\partial \Omega$ induced by the Euclidean measure $\dv=dxdy$ in $\R^2$.

\begin{lem}\label{lem:elliptic}
The elliptic equation with boundary conditions in \eqref{eqn:u} has a unique solution $u\in H^1(\Omega)$, and there exists  $C>0$, independent of $k$, such that
\begin{align*}
\norm{u}_{\mathcal{H}^1_k(\Omega)}  + \norm{ku}_{L^2(\pa \Omega)} + \norm{\pa_\nu u}_{L^2(\pa \Omega)} + \norm{\pa_\tau u}_{L^2(\pa \Omega)} \leq C\norm{f}_{L^2(A)}.
\end{align*}
Here $\tau$ is the unit tangent vector to $\pa \Omega$. 
\end{lem}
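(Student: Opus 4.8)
The plan is to follow, essentially verbatim, the proof of Proposition~\ref{prop:elliptic1}, replacing the unit square $S$ by the convex polygon $\Omega$, the side $\{1\}\times[0,1]$ by $A$, and the vertex $(1,0)$ by $(a_0,b_0)$. For existence and uniqueness of $u\in H^1(\Omega)$: the sesquilinear form $B(u,v)=\int_\Omega\nabla u\cdot\nabla\overline v\,\dv-k^2\int_\Omega Vu\overline v\,\dv+ik\int_{\pa\Omega}u\overline v\,\ud s$ associated with \eqref{eqn:u} satisfies a G\aa rding inequality on $H^1(\Omega)$, so by the Fredholm alternative existence reduces to uniqueness; and if $f=0$ then taking $v=u$ gives $\mathrm{Im}\,B(u,u)=k\norm{u}_{L^2(\pa\Omega)}^2=0$, hence $u=\pa_\nu u=0$ on $\pa\Omega$, and $u\equiv0$ by unique continuation for $(\Delta+k^2V)u=0$ (as in \cite[Proposition~8.1.3]{melenk1995generalized}).

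For the a priori bound I will use the Rellich--Morawetz multiplier $v({\bf x})={\bf z}\cdot\nabla u({\bf x})$ with ${\bf z}={\bf x}-{\bf z}_0$, choosing the base point ${\bf z}_0\in\mathrm{int}(\Omega)$ close enough to $(a_0,b_0)$ that, by the $C^1$-continuity of $\nabla V$, the non-trapping inequality \eqref{eqn:non-trapping} persists in the shifted form $2V({\bf x})+({\bf x}-{\bf z}_0)\cdot\nabla V({\bf x})\geq\tfrac12 c$ on $\Omega$; this is the exact analogue of the choice ${\bf z}=(x-1+\eps,y-\eps)$ in Proposition~\ref{prop:elliptic1}. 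Convexity of $\Omega$ then gives ${\bf z}\cdot\nu\geq\mathrm{dist}({\bf z}_0,\pa\Omega)=:\eps>0$ on each edge, the polygonal substitute for ``${\bf z}\cdot\nu\geq\eps>0$ on $\pa S$.'' To justify $v$ as a test function I will first take $f\in H^{1/2}(A)$, so that $u\in H^2(\Omega)$ by the elliptic regularity theory for convex polygons (Theorems~1.6.1.5 and 4.3.1.4 in \cite{grisvard2011elliptic}, used as in the proof of Lemma~\ref{lem:difference}), and then recover general $f\in L^2(A)$ by density, using that $f\mapsto u$ is continuous $L^2(A)\to H^1(\Omega)$ for each fixed $k$ and that all the bounds below are uniform in $k$.

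With $u\in H^2(\Omega)$ the integration by parts leading to \eqref{eqn:B} goes through unchanged on the Lipschitz domain $\Omega$ (no vertex contributions arise, since ${\bf z}\,|\nabla u|^2\in W^{1,1}(\Omega)$), yielding the same identity for $\mathrm{Re}\,B(u,v)$ with $S$ replaced by $\Omega$; on the other hand $B(u,v)=\int_A f\,\overline{{\bf z}\cdot\nabla u}\,\ud s$ by \eqref{eqn:u}. Equating the two, one uses: the shifted \eqref{eqn:non-trapping} to get $k^2\int_\Omega V|u|^2+\tfrac{k^2}{2}\int_\Omega({\bf z}\cdot\nabla V)|u|^2\geq\tfrac{c}{4}k^2\norm{u}_{L^2(\Omega)}^2$; the bound ${\bf z}\cdot\nu\geq\eps$ to get $\tfrac12\int_{\pa\Omega}|\nabla u|^2({\bf z}\cdot\nu)\geq\tfrac{\eps}{2}\big(\norm{\pa_\nu u}_{L^2(\pa\Omega)}^2+\norm{\pa_\tau u}_{L^2(\pa\Omega)}^2\big)$, which in turn controls $\norm{ku}_{L^2(\pa\Omega)}$ via the boundary conditions ($|\pa_\nu u|=k|u|$ on $\pa\Omega\setminus A$ and $\pa_\nu u=f-iku$ on $A$); the nonnegativity of $V$ to discard $-\tfrac{k^2}{2}\int_{\pa\Omega}V|u|^2({\bf z}\cdot\nu)\leq0$; and Cauchy--Schwarz together with Young's inequality (with $\mathrm{diam}(\Omega)$ and $\eps$) to absorb the terms $\int_A f\,\overline{{\bf z}\cdot\nabla u}$ and $\mathrm{Re}\,ik\int_{\pa\Omega}u({\bf z}\cdot\nabla\overline u)$ into $\tfrac14\int_{\pa\Omega}|\nabla u|^2({\bf z}\cdot\nu)$ at the cost of $C\norm{f}_{L^2(A)}^2$ --- exactly as in the proof of \cite[Proposition~8.1.4]{melenk1995generalized}. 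A second, routine estimate using $B(u,u)$ (as in the first half of that proof) then controls $\norm{u}_{\mathcal{H}^1_k(\Omega)}$. Tracking the constants through shows they are independent of $k$, depending only on $c$, $\norm{V}_{C^1(\Omega)}$, and the geometric data of $\Omega$ --- its diameter (bounding $|{\bf z}|$), inner radius, and the interior angles at the two vertices of $A$ (which enter through $\eps$ and the $H^2$-regularity estimate).

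The only genuine obstacle is the justification of the multiplier: the solution of \eqref{eqn:u} for merely $L^2(A)$ data need not a priori lie in $H^2(\Omega)$, so $v={\bf z}\cdot\nabla u$ must be legitimized by first working with smoother boundary data and then passing to the limit. Convexity of $\Omega$ --- via Grisvard's $H^2$-regularity, already invoked for Lemma~\ref{lem:difference} --- is what makes this possible, and once it is in place the remainder is a line-by-line transcription of the proof of Proposition~\ref{prop:elliptic1}.
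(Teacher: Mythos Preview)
Your proposal is correct and takes essentially the same approach as the paper, which simply states that the proof of this lemma is ``identical'' to that of Proposition~\ref{prop:elliptic1} (i.e., the Rellich--Morawetz multiplier argument from \cite[Proposition~8.1.4]{melenk1995generalized}, with the base point of ${\bf z}$ shifted to a nearby interior point of $\Omega$ so that both the shifted non-trapping inequality and ${\bf z}\cdot\nu>0$ on $\pa\Omega$ hold). You have additionally spelled out the density/$H^2$-regularity justification of the multiplier that the paper leaves implicit.
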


By Lemma \ref{lem:elliptic}, the operator $I-R:L^2(A)\to\mathcal{H}^1_k(A)$ is bounded, and we will show that its image contains $\mathcal{H}^1_k(A)$. Given $g\in \mathcal{H}_k^1(A)$, let $w$ be the unique $H^1(\Omega)$ weak solution to the elliptic problem
$$
\begin{cases}
(\Delta + k^2V)w  = 0 &\text{ in } \Omega, \\
\pa_{\nu}w+ikw  = 0 &\text{ on } \paOA, \\
ikw  = \tfrac{1}{2}g &\text{ on } A.
\end{cases}
$$
Again, by the trace theorem,  $w\in H^{1/2}(\pa\Omega)$. The normal derivative of $w$ is in $L^2(\paOA)$, while $w$ is itself in $H^1(A)$. Since $\Omega$ is a convex polygon, the interior angles where $A$ meets its adjacent sides is strictly less than $\pi$. Therefore, by the estimates on solutions to elliptic equations with mixed Dirichlet-Neumann boundary conditions given in \cite{brown-mixed}, $\nabla w$ is in $L^2(\pa\Omega)$. In particular, $\pa_{\nu}w|_{A}\in L^2(A)$. Setting $f$ to be $\pa_{\nu}w|_{A} + \tfrac{1}{2}g \in L^2(A)$, we therefore have $(I-R)f = 2ikw = g$. This means that the range of $I-R$ on $L^2(A)$ is given by $\mathcal{H}_k^1(A)$, and this completes the proof of Theorem \ref{thm:Main}.
\end{proof1}
\subsection{Proof of Proposition \ref{prop:weak}}\label{sec:weak}

Using Lemma \ref{lem:elliptic} above, it is straightforward to show that $\norm{u}_{\mathcal{H}^1_k(A)}$ and $\norm{\pa_\nu u}_{L^2(A)}$  provide control on the boundary data of $u$ on $\pa \Omega \backslash A$ in the following sense.
\begin{lem} \label{lem:control}
There exists a constant $C>0$, independent of $k$, such that {for $u$ solving \eqref{eqn:u},}
\begin{align*}
\norm{ku}_{L^2(\paOA)} + \norm{\pa_{\nu}u}_{L^2(\paOA)} \leq C\min\{\norm{u}_{\mathcal{H}^1_k(A)}^{1/2},\norm{\pa_\nu u}_{L^2(A)}^{1/2}\}. 
\end{align*}
\end{lem}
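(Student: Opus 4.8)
The plan is to exploit the \emph{homogeneous} impedance condition on $\paOA$ together with a Green's identity for the solution $u$. The first step is a pointwise observation: on $\paOA$ the boundary condition $\pa_\nu u + iku = 0$ gives $|\pa_\nu u| = k|u|$ there, so $\norm{\pa_\nu u}_{L^2(\paOA)} = \norm{ku}_{L^2(\paOA)}$. Hence the left-hand side of the lemma equals $2\norm{ku}_{L^2(\paOA)}$, and it suffices to estimate the single quantity $\norm{ku}_{L^2(\paOA)}$.

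The heart of the argument is an energy identity. Multiplying $(\Delta + k^2 V)u = 0$ by $\bar u$ and integrating over $\Omega$ — Green's first identity applies since $u \in H^1(\Omega)$, $\Delta u = -k^2 V u \in L^2(\Omega)$, and $\pa_\nu u \in L^2(\pa\Omega)$ by Lemma \ref{lem:elliptic} — one obtains
$$\int_{\pa\Omega}(\pa_\nu u)\,\bar u\,\ud s = \int_{\Omega}|\nabla u|^2\,\dv - k^2\int_{\Omega}V|u|^2\,\dv.$$
Since $V$ is real valued the right-hand side is real, so taking imaginary parts gives $\mathrm{Im}\int_{\pa\Omega}(\pa_\nu u)\bar u\,\ud s = 0$. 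On $\paOA$ we have $(\pa_\nu u)\bar u = -ik|u|^2$, whose imaginary part is $-k|u|^2$, so splitting the boundary integral yields
$$k\int_{\paOA}|u|^2\,\ud s = \mathrm{Im}\int_{A}(\pa_\nu u)\,\bar u\,\ud s \le \norm{\pa_\nu u}_{L^2(A)}\,\norm{u}_{L^2(A)}$$
by Cauchy--Schwarz. Multiplying by $k$ and using $\norm{ku}_{L^2(A)} \le \norm{u}_{\mathcal{H}^1_k(A)}$ produces the scale-invariant bound
$$\norm{ku}_{L^2(\paOA)}^2 \le \norm{\pa_\nu u}_{L^2(A)}\,\norm{ku}_{L^2(A)} \le \norm{\pa_\nu u}_{L^2(A)}\,\norm{u}_{\mathcal{H}^1_k(A)}.$$

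To finish I would invoke the running normalization $\norm{f}_{L^2(A)} = 1$ from the proof of Theorem \ref{thm:Main}: by Lemma \ref{lem:elliptic} both $\norm{\pa_\nu u}_{L^2(A)}$ and $\norm{u}_{\mathcal{H}^1_k(A)} = \norm{ku}_{L^2(A)} + \norm{\pa_\tau u}_{L^2(A)}$ are bounded by a constant independent of $k$. Feeding this into the last display, the product is at most $C$ times \emph{either} factor, hence at most $C\min\{\norm{\pa_\nu u}_{L^2(A)},\norm{u}_{\mathcal{H}^1_k(A)}\}$; taking square roots and recalling the reduction in the first paragraph gives the claimed estimate.

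I expect the only genuinely delicate point to be this last bookkeeping rather than any analysis: the asserted inequality is not homogeneous in $u$ (its left side is degree one, its right side degree one-half), so the normalization $\norm{f}_{L^2(A)} = 1$, passed through the elliptic estimates of Lemma \ref{lem:elliptic}, is precisely what converts the natural product bound into the stated minimum. The Green's identity, the sign computation on $\paOA$, and the Cauchy--Schwarz step are all routine.
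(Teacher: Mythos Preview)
Your proposal is correct and is essentially identical to the paper's proof: both use Green's identity (the weak formulation with test function $u$), take the imaginary part to obtain $\norm{ku}_{L^2(\paOA)}^2 = \norm{\pa_\nu u}_{L^2(\paOA)}^2 \le \norm{\pa_\nu u}_{L^2(A)}\norm{ku}_{L^2(A)}$, and then invoke the elliptic estimates of Lemma~\ref{lem:elliptic} (under the standing normalization $\norm{f}_{L^2(A)}=1$) to pass to the stated minimum. Your remark that the inequality is not homogeneous and therefore relies on this normalization is exactly the point the paper leaves implicit.
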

\begin{proof1}{Lemma \ref{lem:control}}
We use the weak formulation of \eqref{eqn:u}, which states that
\begin{align} \label{eqn:weak}
0 = -\int_{\Omega}\nabla u \cdot\nabla \bar{v} \,\dv+ k^2\int_{\Omega}Vu\bar{v} \,\dv + \int_{\pa \Omega} \pa_{\nu}u\bar{v} \,\ud s
\end{align}
for all $v\in H^{1}(\Omega)$. Setting $v = u$ and using the boundary conditions yields
\begin{align*}
0 = -\int_{\Omega}|\nabla u|^2 \,\dv + k^2\int_{\Omega}V|u|^2\,\dv - ik \int_{\paOA}|u|^2 \ud s + \int_{A}\pa_\nu u\bar{u} \ud s.
\end{align*}
{Taking the imaginary part of the equation above, and using that $iku = -\pa_{\nu}u$ on $\paOA$,} we have
\begin{align*}
  \norm{\pa_{\nu}u}_{L^2(\paOA)}^2=\norm{ku}_{L^2(\paOA)}^2
= k\text{Im}\int_{A}\pa_\nu u \bar{u} \ud s\leq \norm{\pa_\nu\, u}_{L^2(A)} \norm{ku}_{L^2(A)}.
\end{align*}
The result follows from Lemma \ref{lem:elliptic}. 
\end{proof1}

We will see below that Lemma \ref{lem:control} provides sufficient control on $\norm{ku}_{L^2(A)}$ in terms of $\norm{\pa_\nu u}_{L^2(A)}$ in order to prove Proposition \ref{prop:weak2}.  First, we use Lemma \ref{lem:control} together with the elliptic estimates in Lemma \ref{lem:elliptic} to immediately bound $\norm{\pa_\nu u}_{L^2(A)}$ in terms of $\norm{u}_{\mathcal{H}^1_k(A)}$ and obtain Proposition \ref{prop:weak}. To do this we use the weak formulation again with a different choice of test function $v$.

\begin{proof1}{Proposition \ref{prop:weak}}
We prove this proposition by choosing appropriate test functions  in \eqref{eqn:weak}, following the same ideas as for the original regularity estimates of the impedance problem in Proposition~8.1.4 in \cite{melenk1995generalized}. 
{Let ${\bf z}:=(x-a, y-b)$ with $(a,b)$ to be specified later in the proof, and let $v={\bf z}\cdot \nabla u$.} 
We shall use that 
\begin{align*}
\text{Re}(u\bar{v}) & = \tfrac{1}{2}(x-a)\pa_x\left(|u|^2\right)+ \tfrac{1}{2}(y-b)\pa_y\left(|u|^2\right), \\
\text{Re}(\nabla u \cdot \nabla \bar{v}) & = |\nabla u|^2 + \tfrac{1}{2}(x-a)\pa_x\left(|\nabla u|^2\right) + \tfrac{1}{2}(y-b)\pa_y\left(|\nabla u|^2\right).
\end{align*}
Integrating by parts in \eqref{eqn:weak}, using the above equations and \eqref{eqn:u}, gives 
\begin{align*}
0 & =  - k^2\int_\Omega V|u|^2 \dv - \tfrac{1}{2}k^2\int_\Omega |u|^2 ({\bf z}\cdot \nabla V)\dv  - \tfrac{1}{2}\int_{\pa \Omega}({\bf z}\cdot \nu)|\nabla u|^2\ud s \\
& +\tfrac{1}{2}k^2\int_{\pa \Omega} ({\bf z}\cdot \nu) V|u|^2 \ud s+{\text{Re}\Big(}- ik\int_{\paOA}u ({\bf z}\cdot\nabla \bar{u})\ud s + \int_{A}\pa_x u ({\bf z}\cdot \nabla \bar{u})\ud s\Big).
\end{align*}
Therefore, there exists $C>0$, depending only on the diameter of $\Omega$, such that
\begin{align*}
 &\Big|- k^2\int_\Omega V|u|^2\dv - \tfrac{1}{2}k^2\int_\Omega |u|^2({\bf z}\cdot \nabla V) \dv- \tfrac{1}{2}\int_{\pa \Omega}({\bf z}\cdot \nu)|\nabla u|^2 \ud s +  \int_{A}\pa_x u(x-a)\pa_{x}\bar{u} \ud s\Big|\\
 &\qquad \leq C \Big(\|ku\|^2_{L^2(\pa \Omega)}+ \|ku\|_{L^2(\paOA)}\|\nabla u\|_{L^2(\paOA)} +\|\partial_\tau u\|_{L^2(A)}\|\partial_\nu u\|_{L^2(A)}\Big).
\end{align*}
Using Lemma \ref{lem:control}, {and that $\norm{u}_{\mathcal{H}^1_k(A)}$ is bounded,} we can rearrange this as
\begin{align*}
\left|- k^2\int_\Omega \left(V +\tfrac{1}{2}({\bf z}\cdot \nabla V)\right)|u|^2\dv- \tfrac{1}{2}\int_{\pa \Omega}({\bf z}\cdot \nu)|\nabla u|^2 \ud s +  \int_{A}(x-a)|\pa_{x}{u}|^2 \ud s\right| \leq  C\norm{u}_{\mathcal{H}^1_k(A)}^{1/2}.
\end{align*}
Expanding the second integral on the left hand side into the parts on $\paOA$ and $A$, {using that ${\bf z}\cdot \nu=(x-a)$ on $A$}, and incorporating more terms on the right hand side, we can rewrite the above as
\begin{align} \label{eqn:weak1}
\left|- k^2\int_\Omega \left(V +\tfrac{1}{2}({\bf z}\cdot \nabla V)\right)|u|^2\dv -\tfrac{1}{2}\int_{\paOA}({\bf z} \cdot \nu)|\nabla u|^2 \ud s  +\tfrac{1}{2}\int_{A}({\bf z} \cdot\nu)|\pa_{x}{u}|^2 \ud s\right| \leq  C\norm{u}_{\mathcal{H}^1_k(A)}^{1/2}.
\end{align}
To conclude the proof, we need to choose $(a,b)$ such that each term on the right hand side of \eqref{eqn:weak1} is strictly negative. To do this we first note that for each $\eps>0$, there exists $c_0 = c_0(\eps)>0$ such that
\begin{align*}
(x-1,y-\eps)\cdot \nu \geq c_0>0 \text{ on } \paOA, \qquad (x-1,y-\eps)\cdot \nu = 0 \text{ on } A.
\end{align*}
Therefore, we set ${\bf z} = (x-1-\tfrac{c_0}{2},y-\eps)$, and this has the property that ${\bf z}\cdot\nu \geq \tfrac{c_0}{2}$ on $\paOA$, while ${\bf z}\cdot \nu = -\tfrac{c_0}{2}<0$ on $A$. Finally, for $\eps>0$ sufficiently small, and decreasing $\eps$ and $c_0>0$ if necessary, by the assumption on the potential $V$ in \eqref{eqn:non-trapping}, we have the lower bound $2V +{\bf z}\cdot \nabla V\geq
\tfrac{1}{2}c$ on $\Omega$. This is because the non-trapping assumption on $V$ in \eqref{eqn:non-trapping} also holds for all points $(a,b)$ sufficiently close to $(a_0,b_0) = (1,0)$, with $c$ replaced by $\tfrac{1}{2}c$.  Therefore, each term on the left hand side of \eqref{eqn:weak1} has the same sign, and is each individually bounded by $C\norm{u}_{\mathcal{H}^1_k(A)}^{1/2}$, concluding the proof. 
\end{proof1}

\subsection{Proof of Proposition \ref{prop:weak2}}\label{sec:weak2}

{We start the proof by noting that we may assume, without loss of generality, that $\norm{\pa_\nu u}_{L^2(A)}\leq \norm{\pa_\nu u}_{L^2(A)}^{1/2}$. Indeed, if this were not the case, then  $1\leq \norm{\pa_\nu u}_{L^2(A)}$ and we would be done. It then follows that by Lemma \ref{lem:control}
\begin{align} \label{eqn:g1}
\norm{\partial_\nu u}_{L^2(\pa \Omega)} + \norm{ku}_{L^2(\paOA)}  & \leq C\norm{\pa_\nu u}_{L^2(A)}^{1/2}.
\end{align}
Therefore, we wish to show that the estimate in \eqref{eqn:g1} implies that
\begin{align} \label{eqn:Main1a}
\norm{ku}_{L^2(A)} \leq C_{\delta}(1+k)^{3(1+\delta)/2}\norm{\pa_\nu u}_{L^2(A)}^{1/2}.
\end{align}
}
Let $\{\la_m\}_{m\geq0}$ be the eigenvalues of the problem
$$
\begin{cases}
(\Delta + \la_m V)w_m  = 0 & \text{ in } \Omega \\
\pa_{\nu}w_m = 0 & \text{ on } \pa\Omega,
\end{cases}
$$
with corresponding orthonormal eigenfunctions $w_m$ (with respect to the $(V\cdot,\cdot)$-inner product on $\Omega$). In particular $\la_0=0$ and $w_0$ is constant in $\Omega$.

For $k^2{\notin \{\la_m\}_{m\geq0}}$, let $G(\x;\x')$ be the Neumann Green's function for $\Delta + k^2V$. That is,
\begin{align}\label{defn:NGreen}
G(\x;\x') = \sum_{m=0}^{\infty}\frac{1}{k^2-\la_m} w_m(\x)w_m(\x').
\end{align}
{Here, we adopt the notation $\x = (x,y)$, $\x' = (x',y')$ for points in $\R^2$.}

To prove \eqref{eqn:Main1a} we will split into two cases: $k$ large and small. 
Indeed, let  $c^*>0$ be such that $2c^*$ is a lower bound for the spectral gap. That is, 
$$\lambda_1 - \lambda_0 = \lambda_1 \geq 2c^*.$$ 
In Case 1 below we deal with values of $k$ such that $k^2 \leq c^*$, and in Case 2 we deal with $k^2>c^*$.
\\

\noindent \underline{\textbf{Case 1:} ($k^2$ small)}\\
\\
{For this case we assume that $k^2 \leq c^*$. } Since $\lambda_0 = 0$ is a Neumann eigenvalue, with corresponding constant eigenfunction $w_0$, we first subtract a constant $u_0$ from $u$, so that $u$ is orthogonal to $u_0$ in the $(V\cdot,\cdot)$ inner product. Then, setting $\tilde{u} = u-u_0$, it satisfies
\begin{equation} \label{eqn:case1aa}
\begin{cases}
(\Delta + k^2V)\tilde{u} = -k^2Vu_0 &\text{ in } \Omega,\\
\pa_{\nu}\tilde{u} = \partial_{\nu}u &\text{ on } \pa\Omega.
\end{cases}
\end{equation}

Let $G_0(\x;\x')$ be the part of the Green's function $G$ from \eqref{defn:NGreen} which is orthogonal to the constant eigenfunction $w_0$ in the $(V\cdot,\cdot)$ inner product. That is, 
\begin{align*}
G_0(\x;\x') = \sum_{m=1}^{\infty}\frac{1}{k^2-\la_m} w_m(\x)w_m(\x').
\end{align*}
Then, since 
$
-k^2u_0 \int_{\Omega} {G}_0(\x;\x') V(\x') \ud \x'= 0,
$
we have 
\begin{align} \label{eqn:case1a}
\tilde{u}(\x) =  -\int_{\pa\Omega} {G}_0(\x;\x') \partial_{\nu}u(\x') \ud s(\x').
\end{align}
To bound the right hand side of \eqref{eqn:case1a}, we first consider 
\begin{align*}
F_0(\x') = -\int_{\Omega} G_0(\x;\x') g(\x)\ud \x,
\end{align*}
where $g$ is $L^2(\Omega)$-normalized. Using $\la_m-k^2 \geq c^*$ for $m\geq1$, we have $\norm{F_0}_{L^2(\Omega)} \leq \left(c^*\right)^{-1}$, and $F_0$ satisfies the equation
\begin{align*}
(\Delta + k^2V)F_0 = -\Pi_0g \text{ in } \Omega, \qquad \pa_{\nu}F_0 = 0 \text{ on } \pa\Omega.
\end{align*}
Here $\Pi_0$ is the projection operator to the orthogonal complement of the constant eigenfunction. This means that $\norm{\nabla F_0}_{L^2(\Omega)} \leq C\left(c^*\right)^{-1/2}$, and by a Sobolev trace estimate, the same holds for $\norm{F_0}_{L^2(\pa\Omega)}$.  Returning to the expression in \eqref{eqn:case1a}, we therefore have that 
\begin{align*}
\int_{\Omega}\tilde{u}(\x)g(\x) \ud \x = \int_{\pa\Omega}F_0(\x')\partial_{\nu}u(\x') \ud s(\x')
\end{align*}
can be bounded in absolute value by 
\begin{align*}
\norm{F_0}_{L^2(\pa\Omega)}\norm{\partial_{\nu}u}_{L^2(\pa\Omega)} \leq  C\left(c^*\right)^{-1/2}\norm{\pa_\nu u}_{L^2(A)}^{1/2}.
\end{align*}
Here we have also used the estimate on $\partial_{\nu}u$ from \eqref{eqn:g1}. Therefore, by duality,
\begin{align*}
\norm{\tilde{u}}_{L^2(\Omega)} \leq C\left(c^*\right)^{-1/2}\norm{\pa_\nu u}_{L^2(A)}^{1/2}.
\end{align*}
Combining this with the equation in \eqref{eqn:case1aa} thus gives
\begin{align*}
\norm{\tilde{u}}_{L^2(\pa\Omega)}^2 \leq C\norm{\tilde{u}}^2_{H^1(\Omega)}  \leq C\left(c^*\right)^{-1}\norm{\pa_\nu u}_{L^2(A)} + C\left|\int_{\pa\Omega} \tilde{u}\partial_{\nu}u \ud s\right|.
\end{align*}
Therefore, using \eqref{eqn:g1} again we have
\begin{align*} 
|u_0| - \norm{{u}}_{L^2(\paOA)} \leq \norm{\tilde{u}}_{L^2(\pa\Omega)} \leq C\left(c^*\right)^{-1/2}  \norm{\pa_\nu u}_{L^2(A)}^{1/2}.
\end{align*}
The estimate $\norm{{ku}}_{L^2(\paOA)}\leq C\norm{\pa_\nu u}_{L^2(A)}^{1/2}$ from \eqref{eqn:g1}, thus implies  the bound
\begin{align*}
|u_0| \leq Ck^{-1}\left(c^*\right)^{-1/2} \norm{\pa_\nu u}_{L^2(A)}^{1/2}.
\end{align*}
Therefore,
\begin{align} \label{eqn:case1b}
\norm{ku}_{L^2(A)} \leq \norm{k\tilde{u}}_{L^2(A)} + Ck|u_0| \leq C\left( c^*\right)^{-1/2} \norm{\pa_\nu u}_{L^2(A)}^{1/2},
\end{align}
and this implies the claim in \eqref{eqn:Main1a}.\ \\

\noindent\underline{\textbf{Case 2:} ($k^2$ bounded away from zero)}
\\ 
\\
{For this case we assume that $k^2 > c^*$.} To deal with this, we need to consider  Neumann eigenmodes with frequencies centered at $k^2$. In particular, we want to rule out a Neumann eigenfunction having small Dirichlet data on $\paOA$ relative to the Dirichlet data on $A$. To do this we prove the following proposition (using analogous techniques to those for triangles in \cite{christianson2017triangle}, \cite{christianson2019triangle}).
\begin{lem} \label{prop:efnN}
{Let $V \in C^1(\Omega)$ be as in Theorem \ref{thm:Main}.}
Let $w$ be $L^2(\Omega)$-normalized and satisfy
\begin{align*}
(\Delta + k^2 V)w = h \emph{ in } \Omega, \quad \pa_{\nu}w = 0 \emph{ on } \pa \Omega,
\end{align*}
for some function $h\in H^1(\Omega)$, and $k^2 >c^*$. Then, there exists a constant $c>0$, independent of $k$,
such that, if $\norm{h}_{\mathcal{H}^1_k(\Omega)} \leq ck^2$, then
\begin{align*}
\int_{\paOA} |w|^2 \ud s\geq c.
\end{align*}
\end{lem}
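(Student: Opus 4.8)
The plan is to argue by contradiction via a Rellich-type identity (multiplier method) adapted to the polygon, exactly in the spirit of the vector-field argument already used for Proposition \ref{prop:weak}. Suppose the conclusion fails: then there is a sequence of wavenumbers $k_n$ with $k_n^2 > c^*$, functions $w_n$ with $\norm{w_n}_{L^2(\Omega)} = 1$, forcing terms $h_n$ with $\norm{h_n}_{\mathcal{H}^1_{k_n}(\Omega)} \le c_n k_n^2$ for a sequence $c_n \to 0$, solving $(\Delta + k_n^2 V)w_n = h_n$ with $\pa_\nu w_n = 0$ on $\pa\Omega$, yet $\int_{\paOA}|w_n|^2\,\ud s \to 0$. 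The goal is to show that this forces $\norm{w_n}_{L^2(\Omega)} \to 0$, contradicting the normalization.

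First I would record the basic energy identity: pairing the equation with $\bar w_n$ and integrating by parts, using $\pa_\nu w_n = 0$, gives $\int_\Omega |\nabla w_n|^2\,\dv = k_n^2\int_\Omega V|w_n|^2\,\dv - \int_\Omega h_n\bar w_n\,\dv$, so $\norm{\nabla w_n}_{L^2(\Omega)}^2 \le C k_n^2$ and hence $\norm{w_n}_{\mathcal H^1_{k_n}(\Omega)} \le C k_n$. Next comes the key step: apply the multiplier $v = {\bf z}\cdot\nabla w_n$ with ${\bf z} = (x-a,y-b)$ for a suitable $(a,b)$ near the vertex $(a_0,b_0)$, exactly as in Proposition \ref{prop:weak}. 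The Rellich identity produces, after integrating by parts, a boundary term $\tfrac12\int_{\pa\Omega}({\bf z}\cdot\nu)|\nabla w_n|^2\,\ud s$, an interior term $k_n^2\int_\Omega(V + \tfrac12 {\bf z}\cdot\nabla V)|w_n|^2\,\dv$, together with remainder terms involving $h_n$ and boundary terms involving $w_n$ on $\paOA$. Choosing ${\bf z}$ so that ${\bf z}\cdot\nu \ge c_0 > 0$ on $\paOA$ and ${\bf z}\cdot\nu = -c_0/2 < 0$ on $A$ (possible by convexity, as in Proposition \ref{prop:weak}) and using the non-trapping bound $2V + {\bf z}\cdot\nabla V \ge \tfrac12 c$ on $\Omega$, the interior term $\tfrac{c}{4}k_n^2\int_\Omega|w_n|^2\,\dv = \tfrac{c}{4}k_n^2$ is bounded by: (i) the negative $A$-term $\tfrac{c_0}{4}\int_A|\pa_\tau w_n|^2\,\ud s$ which only helps, (ii) the $\paOA$ boundary term controlled by $\int_{\paOA}|\nabla w_n|^2\,\ud s$, (iii) cross terms in $w_n|_{\paOA}$ and $\pa_\tau w_n|_{\paOA}$, and (iv) terms of the form $k_n^2\int_\Omega |h_n||w_n|\,\dv$ or $\int_\Omega|\nabla h_n||\nabla w_n|$-type contributions. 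Terms in (iv) are $O(c_n k_n^2)$ by the hypothesis on $h_n$ and the energy bound, hence negligible after dividing by $k_n^2$. The essential remaining point is to control all boundary contributions on $\paOA$ — both $\norm{\pa_\nu w_n}_{L^2(\paOA)}$ (which vanishes) and $\norm{w_n}_{H^1(\paOA)}$ — in terms of the smallness hypothesis $\norm{w_n}_{L^2(\paOA)} \to 0$ plus $o(k_n^2)$ errors.

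The main obstacle, and the step I would spend the most effort on, is exactly this last point: the multiplier identity controls $\int_{\paOA}|\nabla w_n|^2\,\ud s$ only from the assumed smallness of the \emph{zeroth-order} Dirichlet data $\int_{\paOA}|w_n|^2\,\ud s$, so one needs an a priori bound converting the full $H^1(\paOA)$ norm of $w_n$ into something controlled. I would handle this by the same elliptic-regularity route used in the proof of Lemma \ref{lem:difference}: since $\pa_\nu w_n = 0$ on $\pa\Omega$ and $\Delta w_n = h_n - k_n^2 V w_n \in L^2(\Omega)$ with $\Omega$ a convex polygon, Theorem 4.3.1.4 in \cite{grisvard2011elliptic} gives $w_n \in H^2(\Omega)$ with $\norm{w_n}_{H^2(\Omega)} \le C(\norm{h_n}_{L^2(\Omega)} + k_n^2\norm{w_n}_{L^2(\Omega)}) \le C k_n^2$; the trace theorem then yields $\norm{\pa_\tau w_n}_{L^2(\paOA)}^2 \le \norm{w_n}_{H^{3/2}(\paOA)}^2 \le C k_n^4$, which is too lossy on its own. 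To recover the needed factor, I would interpolate: $\norm{w_n}_{H^1(\paOA)}^2 \le C\norm{w_n}_{L^2(\paOA)}\,\norm{w_n}_{H^2(\paOA)} \le C \cdot o(1) \cdot k_n^{?}$, and — this is where the constant $c$ in the hypothesis $\norm{h_n}_{\mathcal H^1_{k_n}} \le ck^2$ and the freedom to take $c$ small enters — track the powers of $k_n$ so that after dividing the whole multiplier inequality by $k_n^2$ the right-hand side is $o(1)$. Concretely, the inequality will take the schematic form $\tfrac{c}{4}k_n^2 \le C\big(k_n^2 \norm{w_n}_{L^2(\paOA)}^\alpha + c_n k_n^2\big)$ for some $\alpha > 0$, and since $\norm{w_n}_{L^2(\paOA)} \to 0$ and $c_n \to 0$, dividing by $k_n^2$ gives $\tfrac{c}{4} \le o(1)$, the desired contradiction. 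Finally I would remark that, as in Proposition \ref{prop:weak}, the constant $c$ in the statement is explicit and depends only on the geometry of $\Omega$ (diameter, inner radius, interior angles at the vertices of $A$) and the non-trapping constant in \eqref{eqn:non-trapping}.
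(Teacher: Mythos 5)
The approach you propose — a Rellich/multiplier identity with the shifted vector field ${\bf z}=(x-a,y-b)$ as in Proposition~\ref{prop:weak} — does not close, and the reason is a sign that you have gotten backward. Writing out the identity for the Neumann problem (the impedance boundary terms drop out), and rearranging so that the interior non-trapping term is on the left, one gets schematically
\begin{align*}
\tfrac{1}{2}k^2\int_\Omega (2V + {\bf z}\cdot\nabla V)|w|^2\,\dv \;=\; -\tfrac{1}{2}\int_{\pa\Omega}({\bf z}\cdot\nu)|\nabla w|^2 \ud s \;+\; \tfrac{1}{2}k^2\int_{\pa\Omega}({\bf z}\cdot\nu)V|w|^2\ud s \;+\; O(ck^2).
\end{align*}
With ${\bf z}\cdot\nu < 0$ on $A$, the contribution $-\tfrac{1}{2}\int_A({\bf z}\cdot\nu)|\pa_\tau w|^2\ud s$ on the right-hand side is a \emph{positive} quantity of size potentially comparable to $k^2$ (the energy estimate only gives $\|\nabla w\|_{L^2(\Omega)}\lesssim k$, and the trace bounds you invoke give $\|\pa_\tau w\|_{L^2(A)}^2 \lesssim k^4$, nowhere near $o(k^2)$). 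This term alone can account for the entire left-hand side $\tfrac{c}{4}k^2$, so the contradiction argument fails; your claim that this term ``only helps'' is exactly the error. Choosing instead ${\bf z}\cdot\nu>0$ on $A$ does not help either, since then the boundary mass term $+\tfrac{1}{2}k^2\int_A({\bf z}\cdot\nu)V|w|^2\ud s$ is positive and uncontrolled. Either way the $A$-contribution is not killed, and interpolation between $L^2(\pa\Omega)$ and $H^2(\pa\Omega)$ cannot repair this: the hypothesis only makes $\|w\|_{L^2(\paOA)}$ small, whereas the dangerous boundary term lives on $A$, where nothing is assumed small.

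The paper's proof sidesteps this obstruction with two ideas your proposal is missing. First, the vector field is taken to be $X = (x-1)\pa_x + y\pa_y$, based \emph{exactly} at the vertex $(1,0)$ shared by $A=A_1$ and the adjacent side $A_m$, not shifted into the interior. This makes the $A$-contribution to $\int_{\pa\Omega}\pa_\nu(Xw)\bar w\ud s$ vanish identically (because $\pa_\nu=\pa_x$, $\pa_x w=0$ on $A$, and $x-1=0$ on $A$), and it also makes $(x-1,y)\cdot\nu=0$ on $A_m$. Second, rather than a raw Rellich identity, the paper evaluates $\int_\Omega\big(\Delta(Xw)\bar w - (Xw)\Delta\bar w\big)\dv$ using the commutator $[\Delta,X]=2\Delta$, converts the remaining boundary integrals to $\sum_{j=2}^{m-1}\int_{A_j}(x-1,y)\cdot\nu\,(\pa_\nu^2 w)\bar w\ud s$ using $\pa_\tau\pa_\nu w=0$ on each flat Neumann side, then substitutes $\pa_\nu^2 w = -\pa_\tau^2 w - k^2Vw + h$ from the equation restricted to the boundary, and integrates by parts in $\tau$. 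That last step is where the tangential derivative term reappears as $+|\pa_\tau w|^2$, now with the \emph{good} sign on sides where $(x-1,y)\cdot\nu>0$, and the boundary terms at the vertices vanish because $\nabla w$ is continuous and zero at each vertex (two Neumann conditions meeting at angle $<\pi$). Only after these manipulations does the estimate close, and it does so on the complement of $A_1\cup A_m$ (see Remark~\ref{rem:efnDN}), which is a finer statement than any naive multiplier argument would yield.
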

\begin{rem} \label{rem:efnDN}
In the course of proving Lemma \ref{prop:efnN} we will in fact show that the lower bound holds for the Dirichlet trace on the part of $\pa\Omega$ complement to any two adjacent sides of the convex polygon.
\end{rem}
\begin{proof1}{Lemma \ref{prop:efnN}}
The idea of the proof is to integrate the equation for $w$ against an appropriately chosen test function. For the vector field $X = (x-1)\pa_x + y\pa_y$, we have
\begin{align*}
\left[\Delta,X\right] = 2\Delta, \qquad \left[V,X\right] = -X(V) .
\end{align*}
 Therefore,
\begin{align}  \nonumber
\int_{\Omega}\Delta(Xw)\bar{w} - (Xw)(\Delta \bar{w}) \dv & = \int_{\Omega}X(\Delta w)\bar{w}  - (Xw)\Delta \bar{w} +2(\Delta w)\bar{w} \dv\\ \label{eqn:efnN1}
&= -k^2\int_{\Omega}\left(2V + X(V)\right)|w|^2\dv + \int_{\Omega} X(h)\bar{w} - (Xw)\bar{h} + 2h\bar{w}\dv.
\end{align}
We first obtain an upper bound on the right hand side of \eqref{eqn:efnN1}. Using the assumption \eqref{eqn:non-trapping} on $V$, together with the $L^2(\Omega)$-boundedness of $w$, the first integral on the right hand side is at most $-c_1k^2$. For the second integral, we first note that $\norm{w}_{\mathcal{H}^1_k(\Omega)}$ is bounded by $Ck$. Therefore, by choosing the constant $c$ in the statement of the lemma to be sufficiently small, we can bound the second integral from above by $\tfrac{1}{2}c_1k^2$.
This choice of $c$ thus ensures that 
\begin{equation}\label{eqn:before green}
    \int_{\Omega}\Delta(Xw)\bar{w} - (Xw)(\Delta \bar{w}) \dv \leq - \tfrac{1}{2}c_1k^2.
\end{equation}
We now turn to the left hand side. By Green's identity, $  \int_{\Omega}\Delta(Xw)\bar{w} - (Xw)(\Delta \bar{w}) \dv=\int_{\pa \Omega}\pa_{\nu}(Xw)\bar{w} \ud s- \int_{\pa \Omega} (Xw)\pa_{\nu} \bar{w} \ud s$. Therefore, since the second integral on the right hand side vanishes ($w$ satisfies Neumann boundary conditions), \eqref{eqn:before green} can be written as
\begin{align} \label{eqn:efnN2}
\int_{\pa \Omega}\pa_{\nu}(Xw)\bar{w} \ud s\leq - \tfrac{1}{2}c_1k^2.
\end{align}
Next, we break up $\pa \Omega$ into the pieces $A_1$, $A_2, \ldots A_m$, with $A = A_1$ and $A_j$ the remaining sides of the polygon going counter-clockwise. 
Since $\pa_{\nu} =\pa_{x}$,  $\pa_xw = 0$, and $x-1=0$ on $A$ the contribution to the integral in \eqref{eqn:efnN2} on $A$ vanishes. 
\\
\\
 The contribution to \eqref{eqn:efnN2} from the side $A_j$ is  equal to
\begin{align*}
\int_{A_j}\pa_{\nu}(Xw)\bar{w} \ud s &  = \int_{A_j}\left(\pa_{\nu}w\right)\bar{w} \ud s + \int_{A_j}\left(X\pa_{\nu}w\right)\bar{w} \ud s \\
& = \int_{A_j}  \left(x-1,y\right)\cdot \left(\nabla \pa_{\nu}w\right)\bar{w} \ud s.
\end{align*}
Since $\pa_{\tau}\pa_{\nu}w = 0$ on $A_j$, we can rewrite this as
\begin{align*}
 \int_{A_j}  \left(x-1,y\right)\cdot \nu \left(\pa_{\nu}^2w\right)\bar{w} \ud s.
\end{align*}
As $A_j$ is a side of the convex polygon, and $(1,0)$ is a point on the boundary, the quantity $(x-1,y)\cdot\nu$ is a non-negative constant on $A_j$. Moreover, as $(1,0)$ is the vertex joining the sides $A_1$ and $A_m$, this constant is $0$ for $j = m$, and a strictly positive constant for $2 \leq j \leq m-1$. 
We thus have
\begin{align} \label{eqn:efnN3}
\int_{\pa \Omega}\pa_{\nu}(Xw)\bar{w} \ud s & =  \sum_{j=2}^{m-1}\int_{A_j} (x-1,y)\cdot \nu\left(\pa_{\nu}^2w\right) \bar{w}\ud s.
\end{align} 
Using the equation that $w$ satisfies, we can therefore write \eqref{eqn:efnN2} as
\begin{align*}
  \sum_{j=2}^{m-1}\int_{A_j} (x-1,y)\cdot \nu\left(-\pa_{\tau}^2w - k^2Vw + h\right) \bar{w}\ud s\leq - \tfrac{1}{2}c_1k^2 ,
 \end{align*}
and by integrating by parts along $A_j$ this becomes
\begin{align*}
  \sum_{j=2}^{m-1}\int_{A_j} (x-1,y)\cdot \nu\left(\left|\pa_{\tau}w\right|^2 - k^2V|w|^2+h\bar{w}\right) \ud s
   \leq - \tfrac{1}{2}c_1k^2 .
\end{align*}
In the above, the Neumann boundary conditions and the convexity of $\Omega$ ensure that the gradient of $v_m$ is continuous at each vertex, and hence vanishes there. Therefore, the boundary terms from the integration by parts vanish. 
We have $(x-1,y)\cdot \nu >0$ on $A_j$ for $2 \leq j \leq m-1$, $V$ is bounded below on $\Omega$, and $\norm{h}_{\mathcal{H}_k^1(\Omega)} \leq c k^2$. Therefore, again taking $c>0$ sufficiently small ensures that we must have
\begin{align*}
\sum_{j=2}^{m-1}\int_{A_j} k^2|w|^2 \ud s \geq c_2k^2,
\end{align*}
for some constant $c_2>0$. This completes the proof of the lemma. 
\end{proof1}

We now use Lemma \ref{prop:efnN} to handle Case 2. In this case, we set $\tilde{u} = u - w$, where
\begin{align} \label{eqn:case2}
w = \sum_{|\la_m - k^2|\leq c_0}(Vw_m,u)w_m,
\end{align}
and $c_0 = c_0(k)>0$ will be specified below. Since we are in Case 2, we will in particular choose $c_0$ so that this sum does not contain $m=0$. Then,
\begin{align} \label{eqn:case2a}
(\Delta + k^2V)w =h, \qquad \qquad   {h:=}\sum_{|\la_m - k^2|\leq c_0}(k^2-\lambda_m)(Vw_m,u) Vw_m.
\end{align}
Note that \
\begin{align*}
\norm{h}_{\mathcal{H}^1_k(\Omega)}^2 \leq Ck^2c_0^2\sum_{|\la_m - k^2|\leq c_0}\left|(Vw_m,u)\right|^2= Ck^2c_0^2 \int_{\Omega}V |w|^2 \dv.
\end{align*}
Therefore, we will take $c_0 = c_1k$, with $c_1$ sufficiently small, depending only on $c^*$, so that Lemma \ref{prop:efnN} can be applied. This then implies that
\begin{align} \label{eqn:case2b}
\int_{\paOA}|w|^2 \ud s \geq c^2\int_{\Omega} |w|^2 \dv.
\end{align}
Using \eqref{eqn:case2a}, we see that $\tilde{u} = u-w$ satisfies the equation
\begin{align} \label{eqn:case2c}
(\Delta + k^2V)\tilde{u} = - {h} \;\;\text{ in }\Omega, \qquad \qquad  \pa_{\nu}\tilde{u} = \partial_{\nu}u \;\;\text{ on } \pa\Omega. 
\end{align}
We now let ${G}_k(\x;\x')$ be the part of the Green's function for $\Delta + k^2V$ which is orthogonal (in the $(V\cdot,\cdot)$ inner product) to  {$\{w_m: \;|\la_m - k^2|\leq c_0\}$}. That is, 
\begin{align*}
{G}_k(\x;\x') = \sum_{|\la_m-k^2|>c_0}\frac{1}{k^2-\la_m}w_m(\x)w_m(\x').
\end{align*}
Then, since
\begin{align*}
\int_{\Omega}V(\x') w_m(\x'){G}_k(\x;\x') \ud \x'=0
\end{align*}
for those $w_m$ with $|\la_m-k^2|\leq c_0$, from \eqref{eqn:case2c} we have
\begin{align} \label{eqn:case2d}
\tilde{u}(\x) = -\int_{\pa\Omega} {G}_k(\x;\x')\partial_{\nu}u(\x') \ud s(\x').
\end{align}
Analogously to Case 1, to bound the right hand side of \eqref{eqn:case2d} we first consider
\begin{align*}
F_k(\x') = -\int_{\Omega} G_k(\x;\x') g(\x)\ud \x,
\end{align*}
where $g$ is $L^2(\Omega)$-normalized. Using $|\la_m-k^2| \geq c_0$ for  those $m$ appearing in the sum in the definition of $G_k(\x;\x')$, we have $\norm{F_k}_{L^2(\Omega)} \leq c_0^{-1} = c_1^{-1}k^{-1}$. The function $F_k$ also satisfies the equation
\begin{align*}
(\Delta + k^2V)F_k = -\Pi_kg \text{ in } \Omega, \qquad \pa_{\nu}F_k = 0 \text{ on } \pa\Omega.
\end{align*}
In this case, we use $\Pi_k$ to denote the projection operator away from the eigenfunctions $w_m$, with $|\la_m-k^2|\leq c_0$. This means that $\norm{\nabla F_k}_{L^2(\Omega)} \leq Ckc_0^{-1}=Cc_1^{-1}$. {In a Lipschitz domain, the Sobolev trace estimate states that the $L^2(\pa\Omega)$ norm of a function is controlled by its $H^{1/2+\delta/2}(\Omega)$-norm for any $\delta>0$.} Therefore, returning to the expression in \eqref{eqn:case2d}, for any $\delta>0$, there therefore exists $C_{\delta}>0$ such that
\begin{align*}
\int_{\Omega}\tilde{u}(\x)g(\x) \ud \x = \int_{\pa\Omega}F_k(\x')\partial_{\nu}u(\x') \ud s(\x')
\end{align*}
can be bounded in absolute value by
\begin{align}
\label{eqn:trace1}
\norm{F_k}_{L^2(\pa\Omega)}\norm{\partial_{\nu}u}_{L^2(\pa\Omega)} \leq C_{\delta}\norm{F_k}_{H^{1/2+\delta/2}(\Omega)}\norm{\partial_{\nu}u}_{L^2(\pa\Omega)}\leq  C_{\delta}c_1^{-1}k^{-1/2+\delta/2}\norm{\pa_\nu u}_{L^2(A)}^{1/2}.
\end{align}
Here we have again also used the estimate on $\partial_{\nu}u$ from \eqref{eqn:g1}, and Sobolev interpolation to bound $\norm{F_k}_{H^{1/2+\delta/2}(\Omega)}$. This gives the bound 
\begin{align*}
\norm{\tilde{u}}_{L^2(\Omega)} \leq C_{\delta}c_1^{-1}k^{-1/2+\delta/2}\norm{\pa_\nu u}_{L^2(A)}^{1/2}.
\end{align*}
Since $\tilde{u}$ and $h$ are orthogonal in $L^2(\Omega)$, {we can integrate the equation in \eqref{eqn:case2c} against $\tilde{u}$ and integrate by parts to obtain} 
\begin{align*}
\norm{\nabla\tilde{u}}_{L^2(\Omega)} \leq C_{\delta}c_1^{-1}k^{-1/2+\delta/2}k\norm{\pa_\nu u}_{L^2(A)}^{1/2} + \left|\int_{\pa\Omega}\tilde{u}\partial_{\nu}u\ud s \right|.
\end{align*} 
Thus, $\norm{\nabla\tilde{u}}_{L^2(\Omega)}  \leq C_{\delta}c_1^{-1}\left(1+k^{1/2+\delta/2}\right)\norm{\pa_\nu u}_{L^2(A)}^{1/2}$, and {using the Sobolev trace estimate again gives}
\begin{align} \label{eqn:case2e}
\norm{\tilde{u}}_{L^2(\pa\Omega)} \leq C_{\delta}\norm{\tilde{u}}_{H^{1/2+\delta/2}( \Omega)} \leq C_{\delta}c_1^{-1}\left(1+k^{\delta}\right)\norm{\pa_\nu u}_{L^2(A)}^{1/2},
\end{align}
{for some possibly modified $C_\delta>0$.} 
Therefore, for $c_1$ sufficiently small, depending on $c^*$, we can use \eqref{eqn:case2b} to see that, {since $\tilde u=u-w$,}
\begin{align*}
c\norm{w}_{L^2(\Omega)} - \norm{u}_{L^2(\paOA)} \leq {\|\tilde u\|_{L^2(\paOA)}}\leq C_{\delta}c_1^{-1}\left(1+k^{\delta}\right)\norm{\pa_\nu u}_{L^2(A)}^{1/2}.
\end{align*}
Thus, the estimate on $u$ from \eqref{eqn:g1} implies that $\norm{w}_{L^2(\Omega)} \leq C_{\delta}\left(k^{-1}+(1 +k^{\delta})c_1^{-1}\right)  \norm{\pa_\nu u}_{L^2(A)}^{1/2}$.
{Then, integrating the equation in \eqref{eqn:case2a} against $w$ and integrating by parts,} we obtain $$\norm{w}_{H^1(\Omega)} \leq C_{\delta}(1+k)^{1+\delta}c_1^{-1}\norm{\pa_\nu u}_{L^2(A)}^{1/2}.$$
{We use the Sobolev trace estimate a final time to get
\begin{align}
\label{eqn:trace3}
    \norm{w}_{L^2(\pa\Omega)} 
    \leq C_{\delta}\norm{w}_{H^{1/2+\delta/2}(\Omega)} \leq C_{\delta}(1+k)^{1/2+\delta}c_1^{-1}\norm{\pa_\nu u}_{L^2(A)}^{1/2}.
\end{align}}
{Combining this with} \eqref{eqn:case2e} therefore implies that
\begin{align} \label{eqn:case2f}
\norm{ku}_{L^2(\pa\Omega)} \leq {\norm{k \tilde u}_{L^2(\pa\Omega)}+\norm{kw}_{L^2(\pa\Omega)}} \leq C_{\delta}(1+k)^{3(1+\delta)/2}c_1^{-1}\norm{\pa_\nu u}_{L^2(A)}^{1/2},
\end{align}
which completes the proof of the estimate in \eqref{eqn:Main1a}.

\qed
\begin{rem} \label{rem:sharpness}
From the proof above, we see that any improvement in the dependence on $k$ in the upper bounds in \eqref{eqn:trace1}, \eqref{eqn:case2e}, and \eqref{eqn:trace3} coming from an application of the trace theorem, or an improvement in the lower bound in Lemma \ref{lem:control}, will lead to a sharper bound in Theorem \ref{thm:Main-W}. Note that from \cite{tataru-trace} the sharp estimate for the $L^2(\pa\Omega)$ Dirichlet trace of a $L^2(\Omega)$-normalized Neumann eigenfunction of frequency $k^2$ is $O((1+k)^{1/3})$ (with this bound attained for a sequence of eigenfunctions on the disc), compared to the bound of $O((1+k)^{1/2+\delta})$ that one obtains from a direct application of the Sobolev trace estimate.
\end{rem}

\section{(Non-sharp) Estimates for Obstacles using our machinery}
\label{sec:obs}

Consider the following boundary value problem on $\Omega$ described in Figure \ref{fig:obstacle}
\[
\Delta u + k^2 u = 0,\qquad  \ u|_{\partial \Omega_5} = 0,\qquad \ \pa_{\nu}u+iku|_{\partial \Omega_j} = f_j \ \text{for} \ j = 1,2,3,4.
\]  
We will demonstrate that comparable results to those in Theorem \ref{thm:Main} hold in this case, though we will restrict ourselves to the case of $k > c^* > 0$ uniformly bounded below away from $0$ and we will have less explicit constants arising in the estimates for large $k$. 
We prove for{{ any $0<\gamma < \tfrac{1}{2}$}} 
the slightly modified bounds
\begin{align}
\norm{(I-R)f}_{{H}^{1+\gamma}(A)} & \geq c_- (k) \norm{f}_{H^\gamma (A)}, \label{I-Ralt} \\
\norm{(I+R)f}_{{{ H^{\gamma}(A)}}} & \geq  c_+ (k) \norm{f}_{{{ H^{\gamma}(A)}}},   \label{I+Ralt}
\end{align}
where $c_{\pm} (k)$ {are constants depending only on $k$. Here the operator $R$ is defined analogously to before, now with Dirichlet boundary conditions on the obstacle $\pa\Omega_5 = \partial K$ for $\Omega_5$ an open, convex set contained in $\Omega$, and $A=\pa\Omega_4$ is the right side of the square.}
The proof of these bounds involve a boundary control estimate using microlocal analysis. With more careful microlocal methods, these bounds can likely be improved and optimized, though we do not pursue this here. {As we are not trying to track the dependence of the estimates on $k$, we now use the standard fractional Sobolev spaces $H^{\gamma}(A)$ on the boundary.}

\begin{figure}[h]
		\includegraphics[width=6cm]{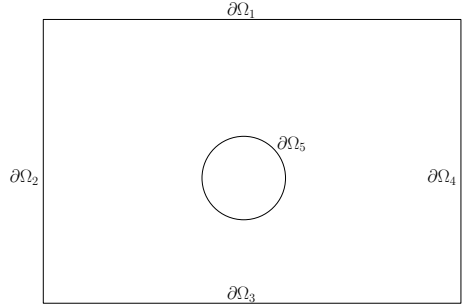} 
	\caption{A schematic of the obstacle problem.}
	\label{fig:obstacle}
\end{figure}

First, we consider estimate \eqref{I-Ralt}.  Following the first steps of the proof in Section \ref{sec:proof}, we may assume that $\| f \|_{H^\gamma (A)} = 1$, and so
\begin{align*}
   1   \leq \| k u \|_{H^\gamma (A)} + \| \partial_\nu u \|_{H^\gamma (A)} \leq
    k\| u \|_{H^{1+\gamma} (A)} +  \| \partial_\nu u \|_{H^\gamma (A)}.
\end{align*}
Therefore, the desired estimates on $I\pm R$ will follow once we prove the existence of $C_\pm(k)>0$ such that
{\begin{align} \label{obsbd}
 \| \partial_\nu u \|_{H^\gamma (A)} \leq C_-(k) \| u \|^{\frac14}_{{H}^{1+\gamma}(A)}, \qquad  \| ku \|_{H^\gamma (A)} \leq C_+(k) \| \pa_{\nu}u \|^{\frac14}_{{H}^{\gamma}(A)}.
\end{align}}

{We now proceed to replicate the rest of the proof from Section \ref{sec:proof}. Under the restriction to $k>c^*$ and since the obstacle is star-shaped, Theorem A.6 in \cite{graham2019helmholtz} ensures that the analogous elliptic estimates to Lemma \ref{lem:elliptic} continue to hold.} Lemma \ref{lem:control} also follows using the same proof since we have Dirichlet boundary conditions on the obstacle. We emphasize however that the bounds on the normal derivatives only hold on $\Omega_{1,2,3}:=\partial \Omega_1 \cup \partial \Omega_2 \cup \partial \Omega_3$, as the proof gives bounds only on the normal derivatives of the components of the boundary with impedance boundary conditions. Even with these lemmas in hand, the proof of Proposition \ref{prop:weak} involves integration by parts estimates with well chosen vector fields to fit the polygonal nature of the problem. In particular, the proof of Proposition \ref{prop:weak} would now pick up an error term involving the normal derivative of $u$ on the obstacle. As a result, we cannot in general use these techniques when there is an obstacle, and a modification is required.

{Therefore, we will establish both estimates in \eqref{obsbd} using the strategy of the proof of Proposition \ref{prop:weak2}. Note that we have restricted here to $k$ uniformly bounded away from $0$, and hence, we only need to consider Case $2$ of the proof.} The other key difference, needed 
to allow for obstacles in our estimates, is that we need to generalize the results in Lemma \ref{prop:efnN} to give boundary control on the three quadrilateral walls  $\partial \Omega_{1,2,3}$  away from the shared wall $A = \partial \Omega_{4}$ and the obstacle itself $\partial \Omega_{5}$.  
The resulting boundary control estimates that we require are of the following form.  We similarly write $\partial\Omega_{1,2,3,4}:=\partial \Omega_1 \cup \partial \Omega_2 \cup \partial \Omega_3\cup \partial \Omega_4$ to refer to the full set of boundaries for the quadrilateral.

\begin{prop} 
\label{prop:efnNobs}
Let $c^*>0$ and
{$w$ be an $L^2(\Omega)$-normalized function satisfying
\begin{align}
\label{app:eqns}
(\Delta + k^2 V)w = h \emph{ in } \Omega, \qquad \pa_{\nu}w = 0 \emph{ on } \pa \Omega_{1,2,3}, \qquad\ w = 0  \emph{ on } \pa \Omega_{5}
\end{align}
for some $h\in H^1(\Omega)$ and $k^2 >c^*$. Suppose that either $w = 0$ on $\pa\Omega_4$, or $\pa_{\nu}w = 0$ on $\pa\Omega_4$. Then, for each $\eps>0$ there exists  $C>0$ (depending only on $c^*$ and $\eps$) such that
\begin{align}
\label{eq:obscontalt}
\| w \|_{H^1 (\Omega)} \leq C k^\epsilon \left( \| h \|_{H^1 (\Omega)}+ \| w  \|_{H^1 (\partial\Omega_{1,2,3})} \right).
\end{align}}
\end{prop}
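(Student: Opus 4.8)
The plan is to adapt the proof of Lemma \ref{prop:efnN} (the Neumann eigenfunction boundary control estimate), upgrading it to allow an obstacle with Dirichlet conditions and an extra edge $A = \pa\Omega_4$ on which $w$ satisfies \emph{either} Dirichlet or Neumann conditions. The key new input, as the remark after Lemma \ref{prop:efnN} already suggests, is that the Rellich/vector-field argument gives control of the Dirichlet (or Neumann) trace on the boundary components complementary to any two adjacent sides — here the complementary part is $\pa\Omega_{1,2,3}$. First I would set up a vector field $X = (x-x_0)\pa_x + (y-y_0)\pa_y$ centered at a point $(x_0,y_0)$ chosen so that $X$ is tangent to $A$ and to as many of the obstacle's supporting structure as possible; because $\Omega_5$ is convex and contained in $\Omega$, for a suitable choice of $(x_0,y_0)$ the quantity $(x-x_0,y-y_0)\cdot\nu$ is non-negative on $\pa\Omega_5$. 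As before, $[\Delta,X] = 2\Delta$ and $[V,X] = -X(V)$, so integrating $\Delta(Xw)\bar w - (Xw)\Delta\bar w$ over $\Omega$ and using the equation \eqref{app:eqns} produces, on the one hand, the bulk term $-k^2\int_\Omega(2V + X(V))|w|^2 \,\dv$ plus terms controlled by $\|h\|_{H^1(\Omega)}\|w\|_{H^1(\Omega)}$, and on the other hand, by Green's identity, a sum of boundary integrals over $\pa\Omega_1,\dots,\pa\Omega_4$ and $\pa\Omega_5$.

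Next I would analyze each boundary contribution. On $\pa\Omega_5$ we have $w = 0$ and $\pa_\tau w = 0$, so $\nabla w = (\pa_\nu w)\nu$ there; the boundary integral reduces to a multiple of $\int_{\pa\Omega_5}(x-x_0,y-y_0)\cdot\nu\,|\pa_\nu w|^2\,\ud s$, which has a definite (favorable) sign by the convexity/star-shapedness choice of $(x_0,y_0)$, so it can be dropped to the correct side of the inequality — this is exactly the star-shaped-obstacle Rellich trick. On $A = \pa\Omega_4$, whichever of the two boundary conditions holds, the same computation as in Lemma \ref{prop:efnN} (where $w$ or $\pa_\nu w$ vanishes and $\pa_\tau$ of the vanishing quantity is zero, while $(x-x_0,y-y_0)\cdot\nu$ can be arranged to vanish or have favorable sign) kills or controls that term. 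The remaining contributions from $\pa\Omega_{1,2,3}$ (the impedance/Neumann edges of the quadrilateral) are, after integration by parts along each edge as in the proof of Lemma \ref{prop:efnN}, bounded by $C\|w\|_{H^1(\pa\Omega_{1,2,3})}^2$, with the vertex boundary terms vanishing by the convexity of the quadrilateral and continuity of $\nabla w$ at the corners. Collecting everything yields
\[
c_1 k^2 \|w\|_{L^2(\Omega)}^2 \leq C\big(\|h\|_{H^1(\Omega)}\|w\|_{H^1(\Omega)} + \|w\|_{H^1(\pa\Omega_{1,2,3})}^2\big),
\]
and combining with the elliptic identity $\|\nabla w\|_{L^2(\Omega)}^2 = -\mathrm{Re}\int_\Omega h\bar w + k^2\int_\Omega V|w|^2$ (using the boundary conditions so the trace terms drop) gives $\|w\|_{H^1(\Omega)}^2 \lesssim \|h\|_{H^1(\Omega)}^2 + \|w\|_{H^1(\pa\Omega_{1,2,3})}^2 + (\text{lower order})$, after absorbing via a Young's inequality.

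The one genuinely new difficulty, which is why the statement only claims a bound with a loss of $k^\eps$ rather than a clean $k$-independent constant, is that the above Rellich argument controls $\|w\|_{L^2(\Omega)}$ and $\|\nabla w\|_{L^2(\Omega)}$ at the level of $k^2\|w\|_{L^2}^2 \lesssim \cdots$, but to get the full $H^1$ norm on the left with the stated form one must handle the possibility of quasi-resonant behavior (clustering of Neumann-type eigenvalues near $k^2$ through the obstacle). I would deal with this exactly as in Case 2 of the proof of Proposition \ref{prop:weak2}: the loss comes from the Sobolev trace estimate $\|v\|_{L^2(\pa\Omega)} \leq C_\eps \|v\|_{H^{1/2 + \eps/2}(\Omega)}$ in a Lipschitz domain (rather than the sharp $H^{1/3}$-type bound), together with interpolation, producing the $k^\eps$ factor. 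Thus the main obstacle is bookkeeping the microlocal/trace loss and verifying that the obstacle term on $\pa\Omega_5$ is truly sign-favorable under the chosen vector field; once those are in place, the estimate \eqref{eq:obscontalt} follows by the same collect-and-absorb steps as in Lemma \ref{prop:efnN} and Proposition \ref{prop:weak2}. I would also invoke, where needed for the elliptic regularity near the mixed Dirichlet--Neumann corners and near the obstacle, the results of \cite{brown-mixed} and \cite{grisvard2011elliptic} as in the earlier arguments, and the microlocal boundary-control input referenced in Appendix \ref{app:obscont}.
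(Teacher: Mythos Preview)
Your Rellich/vector-field approach has a genuine gap at the obstacle. To kill the contribution from $\partial\Omega_4 = A$ you need $(x_0,y_0)$ on the line $\{x=1\}$ (so that $X$ is tangent to $A$ and the $A$-boundary term drops out as in Lemma~\ref{prop:efnN}); but for $(x-x_0,y-y_0)\cdot\nu$ to have a definite sign on $\partial\Omega_5$ you need $(x_0,y_0)$ to lie inside the convex obstacle $\Omega_5$. These two requirements are incompatible, since $\Omega_5$ lies in the interior of the square. With $(x_0,y_0)$ on $A$, the quantity $(x-x_0,y-y_0)\cdot\nu$ changes sign across $\partial\Omega_5$ (positive on the side facing $A$, negative on the far side), so the obstacle boundary integral $-\int_{\partial\Omega_5}((x-x_0,y-y_0)\cdot\nu)|\partial_\nu w|^2\,\ud s$ becomes an uncontrolled error term involving $\|\partial_\nu w\|_{L^2(\partial\Omega_5)}$. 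This is exactly the obstruction the paper flags in Section~\ref{sec:obs}: ``the proof of Proposition~\ref{prop:weak} would now pick up an error term involving the normal derivative of $u$ on the obstacle.''

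The deeper obstruction is dynamical, not a trace-theorem loss. After reflecting the square across $A$ (to absorb the Dirichlet or Neumann condition there), one obtains two convex obstacles, and the bouncing-ball orbit between them is a hyperbolic trapped ray that never meets $\partial\Omega_{1,2,3}$. No Rellich identity can control concentration along such an orbit; this requires the resolvent estimate of Ikawa and G\'erard for scattering by convex bodies, which carries a $\log\langle k\rangle$ loss --- and that, not a Sobolev trace interpolation, is the source of the $k^\eps$ in \eqref{eq:obscontalt}. Accordingly the paper's proof (Appendix~\ref{app:obscont}) does not use a Rellich argument at all: it reflects the domain to smooth out the corners, applies the Bardos--Lebeau--Rauch lifting lemma to get the elliptic control \eqref{bz:sc1} on the region away from the obstacles, feeds the Ikawa--G\'erard exterior resolvent bound \eqref{bz:sc2} into the black-box framework of Burq--Zworski (Theorem~6 of \cite{burq2004geometric}), and glues the two to obtain \eqref{bz:sc4}. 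Your closing reference to ``the microlocal boundary-control input'' is in fact the entire proof, not an auxiliary regularity step.
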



 This estimate does not appear in the literature to our knowledge, but it is very similar to estimates derived in \cite{Burq_Thesis,burq2004geometric} related to observability/control theory for Schr\"odinger equations in various geometries.  The key components required to prove such estimates are dynamical control of geodesics that intersect the control region and a resolvent estimate that determines how much one can concentrate on a non-degenerate hyperbolic orbit.  The proof of this estimate involves some technical microlocal analysis tools that exceed the scope of this section, but we give an overview of the proof in Appendix~\ref{app:obscont}.

{We now demonstrate how to obtain the first estimate in \eqref{obsbd} by adapting the proof of Proposition \ref{prop:weak2}}. 
Using Proposition \ref{prop:efnNobs} instead of Lemma \ref{prop:efnN}, we can approach the proof of this estimate using a Green's function very similarly to the proof of Proposition \ref{prop:weak2}. {The main difference is due to the gradient of $w$ on $\pa\Omega_{1,2,3}$ appearing in the right hand side of the estimate in Proposition \ref{prop:efnNobs}. This means that we require control on a higher order derivative of $u$ on $\pa\Omega_{1,2,3}$, and this is why our estimates on $I\pm R$ require fractional Sobolev spaces.}  

We now begin our adaptation of the proof of Proposition \ref{prop:weak2}. Namely, let
us take $\{\mu_m\}_{m\geq0}$ to be the eigenvalues of the problem
$$
\begin{cases}
(\Delta + \mu_m )w_m  = 0 & \text{ in } \Omega, \\
\pa_{\nu}w_m = 0 & \text{ on } \pa\Omega_{1,2,3}, \\
w_m = 0 & \text{ on } \pa\Omega_{4,5}, \\
\end{cases}
$$
with corresponding orthonormal eigenfunctions $w_m$, where $\partial\Omega_{4,5}:=\partial \Omega_4 \cup \partial \Omega_5$.  We note that we take Dirichlet boundary conditions on $A = \pa \Omega_4$ since we wish to observe control on the normal derivatives there. 


To prove the first estimate in \eqref{obsbd}, we may assume without loss of generality that 
\[
\| u \|_{H^{1+\gamma}(A)} < \| u \|_{H^{1+\gamma}(A)}^{\frac14}.
\]
By the analogous statement to Lemma \ref{lem:control}, it therefore follows that
\[
\| \partial_\nu u \|_{L^2 (\partial\Omega_{1,2,3})} + \| k u \|_{L^2 (\partial \Omega_{1,2,3})} \leq C(k)\| u \|_{H^{1+\gamma} (A)}^{\frac12},
\]
where here and throughout $C(k)$ is a constant depending only on $k$ (and which may change from line-to-line). Since $\pa_{\nu}u = -iku$ on $\pa\Omega_{1,2,3}$ and $\nabla u$ is bounded on $\pa\Omega_{1,2,3}$, by interpolation we have
\begin{align} \label{eqn:extra-control}
\| \nabla u \|_{H^\gamma (\partial\Omega_{1,2,3})} + \| k u \|_{L^2 (\partial \Omega)} \leq C(k)\| u \|_{H^1 (A)}^{\frac14}.
\end{align}
To now prove the first estimate in \eqref{obsbd}, let us take
\[
w = \sum_{|\mu_m - k^2| < c_0 (k)} (w_m,u) w_m,
\] 
and note then that 
\begin{align} \label{eqn:w-equation}
(\Delta + k^2) w = h, \ \ h = \sum_{|\mu_m -k^2| \leq c_0} (k^2 - \mu_m) (w_m,u) w_m
\end{align}
and
\begin{align} \label{eqn:h-obstacle}
\| h \|_{H^1 (\Omega)}^2 \leq C k^2 c_0^2  \sum_{|\mu_m -k^2| \leq c_0} | (w_m,u) |^2 = C k^2 c_0^2\norm{w}^2_{L^2(\Omega)}.
\end{align}
{Here $c_0 = c_0(k)$ is a constant depending on $k$ to be prescribed below.}  Defining the modified Green's function
\[
{\tilde G}_{k,c_0} (\x,\x') =  \sum_{|\mu_m -k^2| > c_0} \frac{1}{k^2-\mu_m^2} w_m (\x) w_m (\x'),
\]
we see that for $|\mu_m -k^2| \leq c_0$, we have
\[
\int_\Omega w_m (\x') {\tilde G}_{k,c_0} (\x,\x') d\x' = 0.
\]
Letting ${\tilde u} = u-w$ gives
$$
\begin{cases}
(\Delta + k^2 ){\tilde u}  = -h & \text{ in } \Omega \\
\pa_{\nu} {\tilde u} = \pa_{\nu} u & \text{ on } \pa\Omega_{1,2,3}, \\
{\tilde u} = 0 & \text{ on } \pa\Omega_{5}, \\
{\tilde u} = u & \text{ on } A = \pa\Omega_{4}.
\end{cases}
$$
Thus,
\[
{\tilde u}(\x)  = - \int_{\pa \Omega_{1,2,3}} {\tilde G}_{k,c_0} (\x,\x') \pa_{\nu} u(\x') d\x' -  \int_{A} \pa_{\nu}{\tilde G}_{k,c_0}  (\x,\x')  u(\x') d\x'
\]
since $(\tilde u, w_m) = 0$ for $|\mu_m - k^2| \leq c_0$.

Following the proof of Proposition \ref{prop:weak2} mutatis mutandis, taking $\| g\|_{L^2 ( \Omega)} =1$, we construct the function 
$$F_k(\x) = -\int_\Omega {\tilde G}_{k,c_0} (\x,\x') g(\x')d\x'$$ 
satisfying the equation
\begin{align*}
(\Delta + k^2)F_k = -\tilde{\Pi}_k g \text{ in } \Omega, \qquad \pa_{\nu}F_k = 0 \text{ on } \pa\Omega_{1,2,3}, \qquad F_k = 0 \text{ on } \pa\Omega_{4,5}.
\end{align*}
Once again, we use $\tilde{\Pi}_k$ to denote the projection operator away from the eigenfunctions $w_m$, with $|\mu_m-k^2|\leq c_0$. {Using elliptic estimates in the square \cite{grisvard2011elliptic}, this ensures that $\norm{F_k}_{H^2(\Omega)} \leq C(k)$.} 
Combining this estimate with the expression
\[
\int_{\Omega} {\tilde u}(\x) g(\x) d\x= \int_{\pa \Omega_{1,2,3} } F_k (\x') \pa_{\nu} u (\x') d\x' + \int_{A} \pa_\nu F_k (\x') u (\x')d\x',
\]
{the estimates in \eqref{eqn:extra-control} thus give the $k$-dependent bound $\norm{\tilde{u}}_{L^2(\Omega)} \leq C(k)\norm{u}^{1/4}_{H^{1+\gamma}(A)}$.} 
Using the equation for ${\tilde u}$, and applying the elliptic estimate 
\begin{equation}
    \label{gris_ellest}
\| {\tilde u} \|_{H^{\frac32 + \gamma} (\Omega)} \leq C(k) \Big( \norm{\tilde{u}}_{L^2(\Omega)}+\| h \|_{H^{-\frac12+\gamma} (\Omega) } + \| \tilde u \|_{H^{1 + \gamma} (A)} +   \| \partial_\nu \tilde u \|_{H^{\gamma} (\partial \Omega_{1,2,3})}\Big),
\end{equation}
{we also have the bound
\begin{align} \label{gris_ellest2}
    \| {\tilde u} \|_{H^{\frac32 + \gamma}(\Omega)} \leq C(k)\Big(\| h \|_{H^{-\frac12+\gamma} (\Omega) }+ \norm{u}^{1/4}_{H^{1+\gamma}(A)} \Big).
\end{align}
The estimate \eqref{gris_ellest} is stated in \cite{grisvard2011elliptic}, Section 1.5 (specifically following the discussion of the proof Theorem 1.5.2.4) and is proved using interpolation estimates established in \cite{Grisvard1966}.
Recalling that $\tilde{u} = u-w$, and applying the trace estimate for convex polygons, this implies that
\begin{align*}
    \norm{w}_{H^{1+\gamma}(\pa\Omega_{1,2,3})} -  \norm{u}_{H^{1+\gamma}(\pa\Omega_{1,2,3})} \leq C(k)[\| h \|_{H^{-\frac12+\gamma} (\Omega) }+ \norm{u}^{1/4}_{H^{1+\gamma}(A)} ].
\end{align*}
Using \eqref{eqn:h-obstacle}, we can choose $c_0 = c_0(k)>0$ sufficiently small so that the factor of $Ck^{\eps}\norm{h}_{H^1(\Omega)}$ in the right hand side of \eqref{eq:obscontalt} can be incorporated in the left hand side of the inequality. Therefore, applying Proposition \ref{prop:efnNobs} for this choice of $c_0$,
we have
\begin{align*}
     \norm{w}_{H^1(\Omega)}  & \leq Ck^{\eps} \norm{w}_{H^1(\pa\Omega_{1,2,3})} \\
     & \leq C(k)[\| h \|_{H^{-\frac12+\gamma} (\Omega) }+ \norm{u}^{1/4}_{H^{1+\gamma}(A)} +\norm{u}_{H^{1+\gamma}(\pa\Omega_{1,2,3})} ].
\end{align*}
Again choosing $c_0 = c_0(k)>0$ sufficiently small to absorb $\norm{h }_{H^{-\frac12+\gamma}}$ into the left hand side, and using \eqref{eqn:extra-control}, we obtain
\begin{align} \label{eqn:w-estimate1}
    \norm{w}_{H^1(\Omega)}\leq C(k)\norm{u}^{1/4}_{H^{1+\gamma}(A)}.
\end{align}
The function $w$ satisfies mixed Dirichlet-Neumann boundary conditions and the elliptic equation in \eqref{eqn:w-equation}. From this elliptic equation satisfied by $w$, we can therefore convert the estimate in \eqref{eqn:w-estimate1} into an estimate on $w$ in $H^{3/2+\gamma}(\Omega)$, and inserting this in \eqref{gris_ellest2} implies that
\begin{align*}
    \norm{u}_{H^{3/2+\gamma}(\Omega)}\leq C(k)\norm{u}^{1/4}_{H^{1+\gamma}(A)}.
\end{align*}
Finally, the trace theorem implies the first estimate in \eqref{obsbd}.

The second estimate in \eqref{obsbd},
\begin{align*}
   \norm{ku}_{H^{\gamma}(A)} \leq C_{+}(k)\norm{\pa_{\nu}u}^{1/4}_{H^{\gamma}(A)}, 
\end{align*}
follows using the same strategy of proof both as above and in the proof of Proposition \ref{prop:weak2}, again using the estimate in Proposition \ref{prop:efnNobs} in place of Lemma \ref{prop:efnN}.


}

\section{Remarks and Examples} \label{sec:example}

\subsection{Impedance eigenfunctions on the square} \label{sec:example1}

In this subsection, we discuss the estimates in Theorems \ref{thm:injective}, \ref{thm:Main-W}, and \ref{thm:Main} both in terms of the spaces appearing and the dependence of the estimates on the frequency $k$. To do this, we will consider the special case where $\Omega$ is the unit square $[0,1]\times[0,1]$ and the potential $V\equiv 1$. We write down some explicit solutions to
\begin{align} \nonumber
(\Delta + k^2)u & = 0 \text{ in } S \\ \label{eqn:u-square}
\pa_{\nu} u + iku & = 0 \text{ on } \pa S\backslash A \\ \nonumber
\pa_{\nu} u + iku & = f \text{ on } A,
\end{align}
by first separating variables and looking at the eigenvalues and eigenfunctions of the impedance problem on $[0,1]$.
\begin{lem} \label{lem:1d}
The eigenvalues of 
\begin{align*}
    w''(y) & = -\lambda^2w(y)  \text{ in } [0,1] \\
    w'(1) + ikw(1) & = -w'(0) + ikw(0) = 0
\end{align*}
are given by the solutions $\la = \la_n$ of
\begin{align*}
    e^{2i\la} = \frac{(1-k\la^{-1})^2}{(1+k\la^{-1})^2},
\end{align*}
with corresponding eigenfunctions
\begin{align*}
    w_n(y) = A_n\left\{(\la_n+k)e^{i\la_ny} + (\la_n-k)e^{-i\la_ny}\right\}.
\end{align*}
\end{lem}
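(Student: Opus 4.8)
The plan is to solve the one-dimensional impedance eigenvalue problem directly by the standard ODE method: the general solution of $w'' = -\lambda^2 w$ is $w(y) = \alpha e^{i\lambda y} + \beta e^{-i\lambda y}$, and I would simply impose the two boundary conditions and compute the resulting linear system for $(\alpha,\beta)$. First I would write $w'(y) = i\lambda(\alpha e^{i\lambda y} - \beta e^{-i\lambda y})$ and evaluate at the endpoints. The condition at $y=0$, namely $-w'(0) + ikw(0) = 0$, gives $-i\lambda(\alpha - \beta) + ik(\alpha+\beta) = 0$, i.e. $(k-\lambda)\alpha + (k+\lambda)\beta = 0$; so up to scaling we may take $\alpha = \lambda + k$, $\beta = -(k - \lambda) = \lambda - k$, which already matches the claimed form of $w_n$ with the constant $A_n$ absorbing the normalization.

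Next I would impose the condition at $y = 1$, namely $w'(1) + ikw(1) = 0$. Substituting $w(1) = \alpha e^{i\lambda} + \beta e^{-i\lambda}$ and $w'(1) = i\lambda(\alpha e^{i\lambda} - \beta e^{-i\lambda})$ yields $i(\lambda + k)\alpha e^{i\lambda} + i(k - \lambda)\beta e^{-i\lambda} = 0$, i.e. $(\lambda+k)\alpha e^{i\lambda} = (\lambda - k)\beta e^{-i\lambda}$. Plugging in $\alpha = \lambda + k$ and $\beta = \lambda - k$ gives $(\lambda+k)^2 e^{i\lambda} = (\lambda-k)^2 e^{-i\lambda}$, which rearranges to $e^{2i\lambda} = (\lambda - k)^2/(\lambda+k)^2$; dividing numerator and denominator by $\lambda^2$ gives exactly the stated transcendental equation $e^{2i\lambda} = (1 - k\lambda^{-1})^2/(1+k\lambda^{-1})^2$. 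One should note $\lambda = 0$ is excluded (or handled separately), and that the two eigenvalues $\pm\lambda_n$ give the same eigenfunction up to the obvious relabeling, so we index by the solutions $\lambda_n$.

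There is essentially no hard obstacle here — this is a routine constant-coefficient ODE boundary value computation. The only points requiring a little care are bookkeeping ones: making sure the sign conventions in the boundary condition $-w'(0) + ikw(0) = 0$ are carried through correctly, checking that the eigenfunction formula is consistent between the two boundary conditions (it is, precisely because $\lambda$ satisfies the transcendental equation, so the rank of the $2\times 2$ system drops), and observing that the solutions $\lambda_n$ of the transcendental equation are discrete and can be enumerated (which follows from the fact that $e^{2i\lambda} - (\lambda-k)^2(\lambda+k)^{-2}$ is entire with isolated zeros). I would present the computation in two short displayed steps as above and then remark that any nonzero eigenfunction is a scalar multiple of the displayed $w_n$.
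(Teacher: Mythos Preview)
Your proposal is correct and follows exactly the approach the paper indicates: the paper simply states that the lemma ``follows immediately from solving the ODE on $[0,1]$ with the impedance boundary conditions,'' and your argument spells out precisely that computation. There is nothing to add.
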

This lemma follows immediately from solving the ODE on $[0,1]$ with the impedance boundary conditions. The solution of \eqref{eqn:u-square} with $f_n = w_n$ from Lemma \ref{lem:1d} is then given by $u_n(x,y) = v_n(x)w_n(y)$, with $v_n(x)$ satisfying $v_n''(x) = (\la_n^2 - k^2)v_n(x)$ on $[0,1]$ and the boundary conditions $-v_n'(0)+ikv_n(0)=0$, $v_n'(1)+ikv_n(1) = 1$. Solving this ODE, and setting $\mu_n^2 = \la_n^2 - k^2$ gives 
\begin{align} \label{eqn:vn-defn}
v_n(x) = \frac{(\mu_n+ik)e^{\mu_n x} + (\mu_n -ik)e^{-\mu_nx}}{(\mu_n+ik)^2e^{\mu_n} - (\mu_n-ik)^2e^{-\mu_n}}.
\end{align}
We use this construction to study the type of bounds that we can and cannot obtain on the operators $I\pm R$.
\begin{enumerate}
    \item[1)] We first show that for any fixed $k$, the operator $I-R$ is not bounded from below as an operator from $L^2(A)$ to itself: For $k$ fixed and $n$ large, the eigenvalues $\la_n$ from Lemma \ref{lem:1d} have the asymptotics
    \begin{align*}
        \la_n = n\pi + O_k(n^{-1}), \quad \mu_n = n\pi + O_k(1).
    \end{align*}
    Therefore, the functions $v_n(x)$ from \eqref{eqn:vn-defn} satisfy
    \begin{align*}
        v_n(1) = \tfrac{1}{n\pi} + O_k(n^{-2}), \quad v_n'(1) = 1 + O_k(n^{-1}).
    \end{align*}
    As a result of these asymptotics, since $(I-R)f_n(y) = 2ku_n(1,y) = 2kv_n(1)w_n(y)$, we have
    \begin{align*}
        \frac{\norm{(I-R)f_n}_{L^2(A)}}{\norm{f_n}_{L^2(A)}} = \frac{2k}{n\pi} + O_k(n^{-2}).
    \end{align*}
    Since $n$ can be any positive integer, we indeed see that $I-R$ is not bounded from below as an operator from $L^2(A)$ to itself. 
    
    \item[2)] We next use the functions $u_n(x,y)$ to show that the constant appearing in the lower bound on the operator $I+R$ in Theorem \ref{thm:Main} cannot be taken independent of $k$, for large $k$. Since 
    \begin{align} \label{eqn:vn2}
        (I+R)f_n(y) = 2\pa_xu_n(1,y) = 2v_n'(1)w_n(y), \qquad f_n(y) = w_n(y),
    \end{align}
    we will do this by choosing $n$ (depending only on $k$) to make $v_n'(1)$ small. By the definition of $v_n$ from \eqref{eqn:vn-defn}, we therefore want to make $\mu_n$ as small as possible. Given a small $\alpha>0$, we consider the sequence of $k = k_n$ such that $k+k^{\alpha}$ is equal to $n\pi$, and write 
    \begin{align*}
        \la_n = k + k^{\alpha}+i\delta_n.
    \end{align*}
    Then, $\la_n$ is an eigenvalue provided $\delta_n$ satisfies
    \begin{align*}
        e^{2ik + 2i{k}^{\alpha} - 2\delta_n} = \frac{({k}^{\alpha} +i\delta_n)^2}{(2k +{k}^{\alpha}+i\delta_n)^2}.
    \end{align*}
    For large $k$, we therefore require
    \begin{align*}
        e^{-2\delta_n} = \frac{k^{2\alpha}}{(2k+k^{\alpha})^2} + O(\delta_n k^{-2}),
    \end{align*} 
    which has a solution for $\delta_n$ with $\delta_n = O(\log (k))$. In particular, $\mu_n^2 = \la_n^2 - k^2 = 2k^{1+\alpha} + O(k^{2\alpha})$, and so from \eqref{eqn:vn-defn},
    \begin{align*}
        |v_n'(1)| = O(\mu_n k^{-1}) = O(k^{-1/2+\alpha/2}).
    \end{align*}
    Using this in \eqref{eqn:vn2} shows that 
    \begin{align*}
        \frac{\norm{(I+R)f_n}_{L^2(A)}}{\norm{f_n}_{L^2(A)}} \leq Ck^{-1/2+\alpha/2},
    \end{align*}
    and hence the constant in the second estimate in Theorem \ref{thm:Main} must tend to $0$ at least at this rate as $k$ increases.
    \end{enumerate}

\subsection{Use of $W$ in the numerical scheme} \label{sec:example2}

As we described in the Introduction, our motivation for analysing the operator $W$ is in its use in merge operations in numerical schemes. In the scheme used in \cite{gillman2015spectrally}, the operator $W$ is only applied to operators $Q_j$ that take the following form:

Given $h\in L^2(\paSA)$, let $v_j$ solve
$$
\begin{cases}
    (\Delta + k^2V_j)w_j & = 0 \text{ in } S\\
    \pa_{\nu}w_j + ikw_j & = h \text{ on } \paSA \\
    \pa_{\nu}w_j + ikw_j & = 0 \text{ on } A.
\end{cases}
$$
We again then define the operator $Q_j$ on $L^2(A)$ by $Q_jh = \pa_{\nu}w_j - ikw_j\big|_{A}$. Then, under Assumption \ref{assum:potential-main} on the potential $V_j$, using Proposition \ref{prop:elliptic1}, the functions $w_j$ satisfy the following elliptic estimates,
\begin{align*}
    \norm{kw_j}_{L^2(A)} + \norm{\pa_{y}w_j}_{L^2(A)} \leq C\norm{h}_{L^2(\paSA)}.
\end{align*}
In particular, $\norm{Q_jh}_{L^2(A)} \leq C\norm{h}_{L^2(\paSA)}$. Moreover, since $\pa_{x}w_j + ikw_j = 0$ on $A$, we have
\begin{align*}
    k^{-1}\pa_yQ_jh = k^{-1}\pa_y\pa_xw_j - i\pa_yw_j\big|_{A} =  -2i\pa_yw_j\big|_{A}.
\end{align*}
Therefore, using Theorem \ref{thm:Main-W} we obtain
\begin{align*}
    \norm{WQ_jh}_{L^2(A)} &\leq C^*_{\delta}(1+k)^{3(1+\delta)}\left(\norm{Q_jh}_{L^2(A)} + \norm{k^{-1}\pa_yQ_jh}_{L^2(A)}\right) \\
    & \leq C_\delta^*(1+k)^{3(1+\delta)}\norm{h}_{L^2(\paSA)}.
\end{align*}
In particular, for this composition of operators we do not have the loss of a derivative in the estimate. Moreover, this bound is uniform in $k$, for small frequency $k$. Since, from Lemma \ref{lem:R-boundedness}, $R_j$ is uniformly bounded as an operator from $L^2(A)$ to itself, the above estimates for $WQ_j$ also hold for the operators $WR_mQ_j$. This is important because in the merge process from  \cite{gillman2015spectrally}, as shown in the expressions in \eqref{merge-eqn2} and \eqref{merge-eqn3}, it is these compositions of operators that are used iteratively to reconstruct the ItI operators in the original domain in terms of those of the hierarchical tree of square or rectangular boxes in the partition.

\appendix

\section{The Obstacle Control Theory Estimate}
\label{app:obscont}

In this appendix we provide an overview of the microlocal analysis results and tools required to establish the boundary control estimate in Proposition \ref{prop:efnNobs}. In particular, we recall here the 
{\it black box} control theory and observability machinery from the work of Burq-Zworski \cite{burq2004geometric}, which in turn relies on resolvent estimates for exterior scattering problems.  Resolvent estimates for various scattering settings can be read about, for example, in  \cite{dzbook}, though we will recall the necessary literature for our setting below.    Hence, we especially refer the reader to Section $6$ of \cite{burq2004geometric} for several applications that begin with resolvent estimates of the form considered here.  

The results from \cite{burq2004geometric} are designed to apply to a wide variety of situations and the proofs involve microlocal analytic techniques such as propagation of singularities estimates and semiclassical defect measures that are beyond the scope of the current study.  Hence, we will state the estimates that can be inferred in our setting of a domain with mixed boundary conditions and an obstacle with Dirichlet boundary conditions, but with notation simplified to that we have used above in our analysis.  While the estimate \eqref{eq:obscontalt} in Proposition \ref{prop:efnNobs} does not appear in the literature to our knowledge, it is a relatively straightforward, albeit technically challenging, application of the black box machinery.  Hence, we forego some details here for clarity.  

As in Section \ref{sec:obs} (see Figure \ref{fig:obstacle}), we will consider the operator $-\Delta$ on the rectangle $\Omega$.  Here, we are imposing Dirichlet boundary conditions on the obstacle boundary, referred to as $\partial \Omega_5$, Dirichlet or Neumann boundary conditions on the gluing side, referred to as $\partial \Omega_4$, and imposing Neumann boundary conditions on the remaining exterior boundary components $\partial \Omega_{1,2,3}$.  In addition,  we will consider the domain, $X$, which denotes a properly selected double of $\Omega$ that is a compact manifold having reflected across $\partial \Omega_4$, and $Y$, an open subset of this manifold $X$ containing the resulting obstacles.  See Figure \ref{fig:ref1} for a sketch of what we intend with $X$ and $Y$.  For  simplicity, we will still refer to the outer boundary of the manifold $X$ as $\partial \Omega_{1,2,3}$.

The proofs of the main Theorems $1$ and $8$ in \cite{burq2004geometric} are given in Section $6$ of that work and are related to observability of time-dependent equations, though they are closely related to our desired estimates.  Indeed, the observability estimates in these theorems are seen to be equivalent to a family of resolvent estimates that are related to underlying geometry of the domains in question. For solutions $w$ of the equations in \eqref{app:eqns}, the resolvent estimates in \cite{burq2004geometric} are very similar to the estimates given in \eqref{eq:obscontalt}. The main results that we require to obtain such an estimate are those found in Theorem $6$ of \cite{burq2004geometric}, which allow us to establish the following estimate,
\begin{equation}
\label{bz:sc4}
\| w \|_{H^1 (\Omega)} \leq  \log (1+k^2)\left(  \langle k^2 \rangle^{-\frac12} \| (-\Delta -  k^2) w \|_{H^1 (\Omega)}+ \| w \|_{H^1 (\partial \Omega_{1,2,3})}   \right)
\end{equation}
for our specific geometry. Proving this estimate will clearly result in the proof of Proposition \ref{prop:efnNobs}. The logarithm in the above estimates will be related to a natural logarithmic loss in a resolvent estimate in the exterior to convex obstacles (see \eqref{eqn:obstacle-resolvent} below). 

In order to prove \eqref{bz:sc4}, one must establish first that elliptic estimates hold on the region away from $Y$ corresponding to boundary control results of the form
\begin{equation}
\label{bz:sc1}
\| \chi_{_{\!X \setminus Y}} w \|_{H^1 (\Omega)} \leq C \left( \langle  k^2 \rangle^{-\frac12} \| (-\Delta - k^2) w \|_{H^1(\Omega)} + \| w \|_{H^1 (\partial \Omega_{1,2,3})}  + \mathcal{O} (\langle  k^2 \rangle^{-\infty} ) \| w \|_{H^1 (\Omega)} \right),
\end{equation}
where $\chi_{_{\!X \setminus Y}}$ is a smooth cut-off function supported in a neighborhood $X \setminus Y$ and $\chi_{_{\!X \setminus Y}} = 1$ on $X \setminus Y$.
Second, we require a resolvent estimate on the {\it black box} exterior region given by taking the obstacles in $Y$ embedded in all of $\mathbb{R}^2$.  More precisely, take $\Omega_5'$ to be the union of the obstacle with boundary $\partial\Omega_5$ and its reflection about $A$, then take
\begin{align*}
(-\Delta - k^2 )v = h\; \text{ in } \mathbb{R}^2 \setminus \Omega_5', \qquad v = 0  \text{ on } \pa \Omega_{5}',
\end{align*}
where the operator $-\Delta - k^2$ is taken to have outgoing radiation conditions.  Then, we require that for a similarly defined cut-off function $\chi_{_{\! Y}} $ to a neighborhood of the black box region $Y$ that 
\begin{equation}
\label{bz:sc2}
\|  v \|_{H^1 (  \mathbb{R}^2 \setminus \Omega_5')} \leq \log (1+k^2)  \left(  \langle  k^2 \rangle^{-\frac12} \| \chi_{_{\!Y}}  (-\Delta - k^2) \chi_{_{\!Y}}  v \|_{H^1 ( \mathbb{R}^2 \setminus \Omega_5')}  \right) , \ \ |k| \to \infty,
\end{equation}
where $v$ is taken to be a smooth function, compactly supported within the support of $\chi_{_{\!Y}}$, and again the logarithmic dependence upon $k$ is related to the geometry of the black box region.

The estimate \eqref{eq:obscontalt} will therefore follow directly from Theorem $6$ of \cite{burq2004geometric} provided we can establish the necessary elliptic estimates on $X \setminus Y$ to establish \eqref{bz:sc1} and apply a resolvent estimate exterior to convex obstacles for \eqref{bz:sc2}.

Estimate \eqref{bz:sc1} follows from a series of operations involving reflection operations on the domain and applying the {\it Lifting Lemma} of Bardos-Lebeau-Rauch \cite[Theorem 2.2]{bardos1992sharp}.  We recall the statement of the result here for solutions of the wave equation.

\begin{lem}[\cite{bardos1992sharp}, Theorem 2.1]
Let $M$ be a compact Riemannian manifold with smooth boundary, $\partial M$.  Suppose that $q \in T^* (\partial M)$ is a non-diffractive point and that $u$ is a distribution defined in a sufficiently small neighborhood of $q$ in $M$, say $U$, such that
\begin{equation}
    \label{lift}
-\Delta u \in C^\infty (U), \ u|_{\partial M} \in H^s (U \cap \partial M), \ \nabla u |_{\partial M} \in H^{s-1} (U \cap \partial M).
\end{equation}
Then, $u \in H^s (U)$ with the proper interpretation of microlocal regularity up to the boundary.
\end{lem}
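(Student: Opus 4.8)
This is a microlocal propagation‑of‑regularity statement up to the boundary, so the plan is to localize near $q$, put the operator in a boundary normal form, and show that the boundary $H^s$‑wavefront set $\mathrm{WF}^s_b(u)$ (the version of the $H^s$‑wavefront set adapted to a manifold with boundary, whose emptiness near $q$ is equivalent to $u\in H^s$ near $q$) does not meet $q$. First I would pass to geodesic normal coordinates $(x',x_n)$ with $x_n\geq 0$ flattening $\partial M$, in which the operator (here really the wave‑type operator $D_t^2-\Delta$, whose characteristic variety over $\partial M$ is what makes the non‑diffractive hypothesis nonvacuous) takes the form $P=D_{x_n}^2+R(x_n,x',D_{x'})+(\text{first order})$ with $R$ a second‑order tangential operator. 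Its principal symbol $r(x_n,x',\xi')$ induces the usual splitting of a conic neighbourhood of $q$ in $T^*\partial M$ into the elliptic region $\mathcal E=\{r>0\}$, the hyperbolic region $\mathcal H=\{r<0\}$, and the glancing set $\mathcal G=\{r=0\}$; $\mathcal G$ further decomposes, in the sense of Melrose--Sj\"ostrand, into the gliding set and its complement, and the non‑diffractive hypothesis says exactly that $q$ lies off the gliding set, so that the generalized bicharacteristic through $q$ leaves $\{x_n=0\}$ into the interior.

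\smallskip
\noindent\textbf{The elliptic and transversal regions.} Over $\mathcal E$ the operator $P$ is elliptic in $D_{x_n}$, so $P=(D_{x_n}-\Lambda_-)(D_{x_n}-\Lambda_+)$ modulo a smoothing operator, with $\Lambda_\pm$ first‑order tangential pseudodifferential operators whose symbols have imaginary parts of opposite definite signs; the decaying branch is governed by the Poisson operator of the Dirichlet problem, which gains derivatives and recovers $u$ microlocally from $u|_{x_n=0}\in H^s$ alone, so $q\notin\mathrm{WF}^s_b(u)$ when $q\in\mathcal E$. Over $\mathcal H$ one has instead $P=(D_{x_n}-a_-)(D_{x_n}-a_+)$ modulo smoothing with $a_\pm$ of real principal symbol; solving the two resulting first‑order equations in $x_n$ with Cauchy data reconstructed from the pair $(u|_{x_n=0},D_{x_n}u|_{x_n=0})\in H^s\times H^{s-1}$ — this is precisely where the Neumann trace hypothesis enters — together with the interior regularity $Pu\in C^\infty$, gives $u\in H^s$ microlocally over $\mathcal H$ near $q$, with wavefront set confined to the incoming and reflected bicharacteristics. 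Both of these cases are routine given the hypotheses.

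\smallskip
\noindent\textbf{The glancing region: the main obstacle.} The substantive case is a non‑diffractive glancing point $q\in\mathcal G$, and essentially all the work of the lemma is here. I would use Melrose's normal form for a glancing pair of hypersurfaces, conjugating $P$ by an elliptic Fourier integral operator to the model $D_{x_n}^2+x_nD_{x_1}^2$ modulo lower order terms, so that the generalized bicharacteristics become explicit Airy‑type curves; the non‑diffractive assumption guarantees that the bicharacteristic through $q$ immediately enters $\{x_n>0\}$ rather than gliding along $\{x_n=0\}$. One then propagates $\mathrm{WF}^s_b(u)$ along this curve into the interior via the Melrose--Sj\"ostrand propagation‑of‑singularities theorem for generalized bicharacteristics — equivalently, via a positive‑commutator/energy estimate with a multiplier built from the normal form, which is the computation carried out in detail in \cite{bardos1992sharp}. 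In the interior $Pu\in C^\infty$ forces the wavefront set to vanish, and propagating back along the bicharacteristic yields $q\notin\mathrm{WF}^s_b(u)$. The delicate ingredients are the construction of the Fourier integral operator realizing Melrose's normal form in the presence of the boundary, and the careful bookkeeping of microlocal regularity up to $\partial M$ under this conjugation; the lower order terms of $P$ and the change of the trace spaces are then absorbed by standard symbol calculus and the trace theorem.

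\smallskip
\noindent\textbf{Conclusion.} Combining the three regions, $\mathrm{WF}^s_b(u)$ avoids a full conic neighbourhood of $q$, and by the characterization of membership in $H^s$ near a boundary point in terms of $\mathrm{WF}^s_b$ one obtains $u\in H^s(U')$ for a possibly smaller neighbourhood $U'$ of $q$, which is the stated conclusion (with the advertised interpretation of microlocal regularity up to the boundary). The hard part, as indicated, is entirely the glancing analysis; the rest is bookkeeping built on the elliptic and transversal factorizations and the two trace hypotheses.
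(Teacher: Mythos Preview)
The paper does not prove this lemma at all: it is quoted verbatim as the Lifting Lemma of Bardos--Lebeau--Rauch and simply cited. So there is no ``paper's proof'' to compare against; your sketch is already more than the paper supplies, and its overall architecture (boundary normal form, elliptic/hyperbolic/glancing trichotomy, $\mathrm{WF}^s_b$ as the carrier of the argument, Melrose--Sj\"ostrand propagation) is exactly the machinery BLR use. Your elliptic and hyperbolic paragraphs are fine.

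There is, however, a genuine error in the glancing paragraph, and it is not just terminological. In the standard Melrose--Sj\"ostrand decomposition of the glancing set, \emph{diffractive} points are those at which the free bicharacteristic is tangent to $\{x_n=0\}$ and curves back into the interior $\{x_n>0\}$, while \emph{gliding} points are those at which it curves towards $\{x_n<0\}$ and the generalized bicharacteristic is forced to stay on the boundary. ``Non-diffractive'' therefore means $q$ is \emph{not} of the first kind; in particular the non-diffractive glancing points are the gliding ones, the opposite of what you wrote. Your glancing argument --- propagate along the bicharacteristic into the interior, invoke $Pu\in C^\infty(U)$ there, and propagate back --- is precisely the argument one would try at a \emph{diffractive} point, and it fails for exactly the reason the lemma excludes that case: in any sufficiently small neighbourhood $U$ the interior bicharacteristic leaves $U$ before one can use $Pu\in C^\infty(U)$ to kill the singularity, so nothing is gained. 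At a gliding point, by contrast, the generalized bicharacteristic through $q$ stays on the boundary, and the correct mechanism is that the boundary trace hypotheses $u|_{\partial M}\in H^s$, $\nabla u|_{\partial M}\in H^{s-1}$ already control $\mathrm{WF}^s_b(u)$ along that gliding ray via the Melrose--Sj\"ostrand theorem (together with the boundary normal form and the $H^s$--microlocal calculus at the boundary). Once you swap the two cases and rewrite the glancing paragraph accordingly, the sketch matches the BLR argument.
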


The result essentially states that anything near a smooth part of the boundary is controlled by the boundary.  Trivially, this extends to eigenfunctions.  Estimate  
\eqref{bz:sc1} and its classical analog would both follow from \eqref{lift} were the boundary of our quadrilateral domain smooth, as discussed in \cite{burq2004geometric}.  Since our boundary is piecewise smooth and in Proposition \ref{prop:efnNobs} we are studying mixed homogeneous Neumann and Dirichlet boundary conditions, we need a slight modification.  To handle our domain and boundary conditions, on a rectangle with Neumann (resp. Dirichlet) boundary conditions, we can do a series of even (resp. odd) reflections and explore boundary control in a domain with joint Neumann/periodic boundary conditions.  First, we perform an odd reflection about $\partial \Omega_4$ when it is a Dirichlet exterior boundary or an even reflection when it is Neumann such that we have a domain with two obstacles and care as such only about boundary control on all $4$ sides of the new domain, see Figure \ref{fig:ref1}. 

\begin{figure}[h]
		\includegraphics[width=7cm]{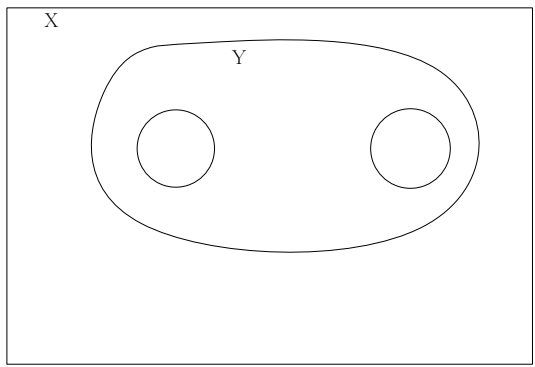} 
	\caption{A schematic of the obstacle problem after the reflection to remove the $\partial \Omega_4$ boundary.}
	\label{fig:ref1}
\end{figure}

In the region $X \setminus Y$, we thus have the boundary control in \eqref{bz:sc1} if the boundary were smooth.  In our case, we must simply do another even reflection first in the vertical direction, giving a cylindrical domain with Neumann boundary conditions on the vertical edges and periodic on the horizontal edges, see the Left panel of Figure \ref{fig:ref2} where we represent periodic boundary conditions with {\color{red} red} and Neumann with {\color{blue} blue} (color online).  This gives boundary control for all components of the function that will hit the now smooth Neumann boundaries.  A symmetric reflection gives a cylindrical domain with Neumann boundary conditions on the horizontal edges and periodic on the vertical edges, see the Right panel of Figure \ref{fig:ref2} where we represent periodic boundary conditions with {\color{red} red} and Neumann with {\color{blue} blue} (color online).  Applying this sequentially proves the boundary control from all remaining geodesics that would only intersect the horizontal boundary.  

\begin{figure}[h]
\begin{center}
		\includegraphics[width=5.5cm]{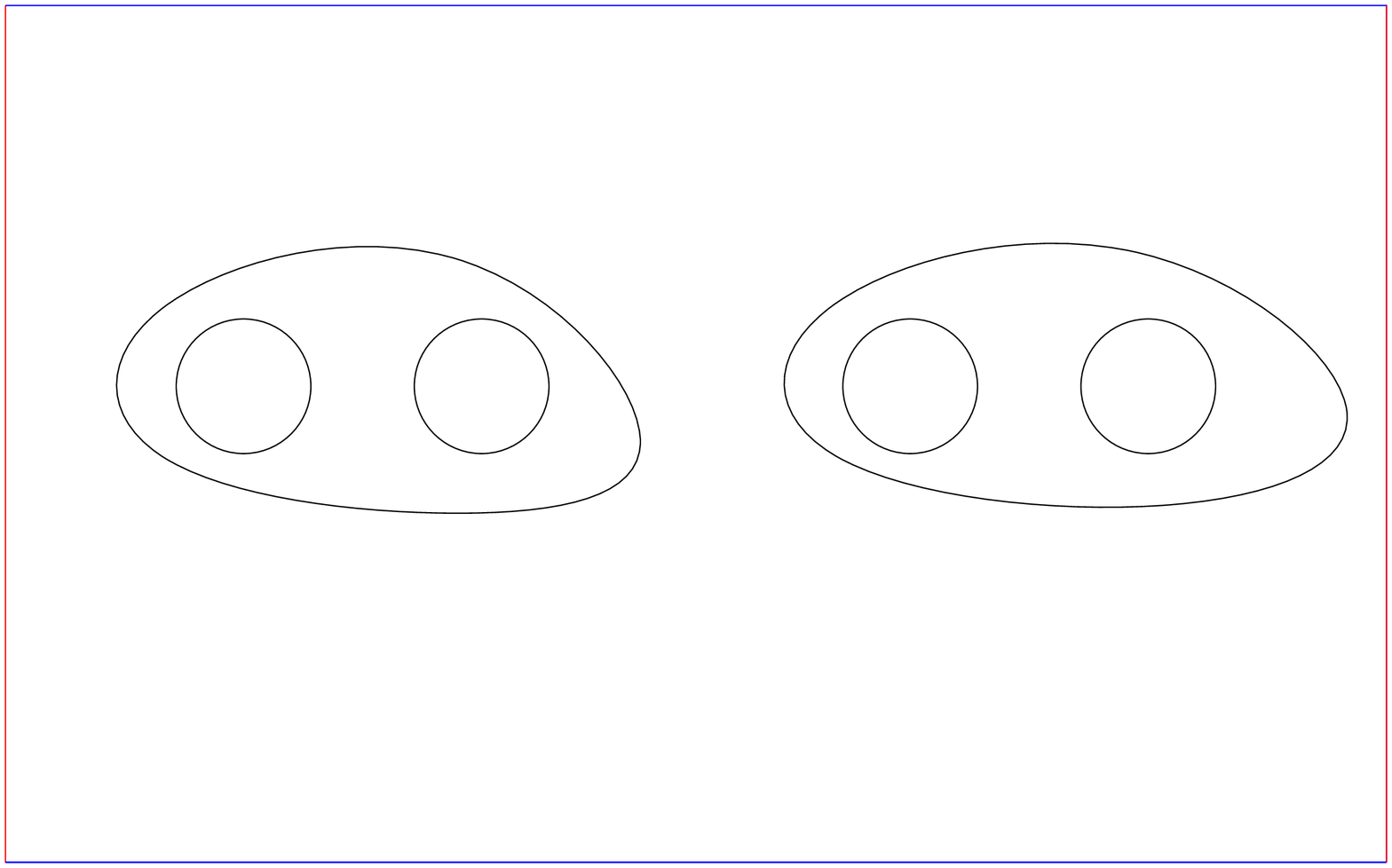} \quad \quad
		\includegraphics[width=6cm]{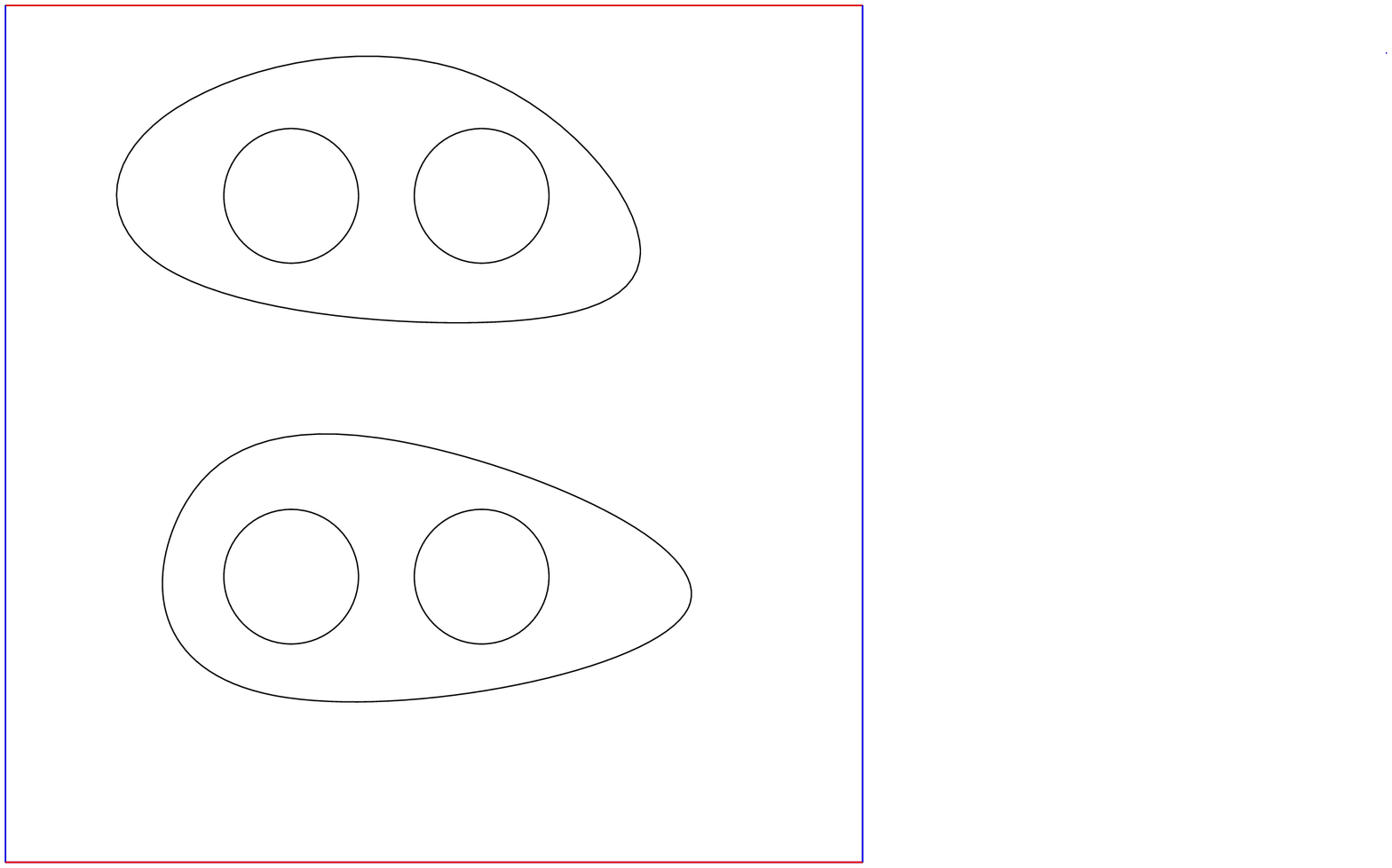}
	\caption{A schematic of the obstacle problem after horizontal reflection (Left) and vertical reflection (Right).}
	\label{fig:ref2}
	\end{center}
\end{figure}

The estimate in \eqref{bz:sc2} for the black box in a neighborhood of the obstacle and shared domain wall, follows from the work of Ikawa (\cite{ikawa1983poles}) and G\'erard (\cite{gerard1988asymptotique}).  This states that the resolvent $\mathcal{R} (k) = (-\Delta - k^2)^{-1}$ exterior to a collection of Dirichlet convex obstacles satisfies
\begin{align} \label{eqn:obstacle-resolvent}
\| \chi \mathcal{R} (k) \chi \|_{L^2 \to L^2} \leq C_0 \frac{ \log \langle k \rangle}{ \langle k \rangle}.
\end{align}
See also the recent book of Dyatlov-Zworski \cite{dzbook}, Chapter $6$ and specifically related estimates in Section $6.3$.  In the case of single obstacle scattering, one has
\[
\| \chi \mathcal{R} (k) \chi \|_{L^2 \to L^2} \leq C_0 \frac{ 1}{ \langle k \rangle}
\]
see \cite{tang2000resonance}.

Therefore, we have established the appropriate estimates \eqref{bz:sc1} and \eqref{bz:sc2}, and hence the proof of \eqref{eq:obscontalt} thus follows from Theorem $6$ of \cite{burq2004geometric}.

\bibliographystyle{alpha}
\bibliography{Refs}

\end{document}